\newcommand{\beqn}{\begin{equation*}}
\newcommand{\eeqn}{\end{equation*}}
\newcommand{\beq}{\begin{equation}}
\newcommand{\eeq}{\end{equation}}
\newcommand{\mhat}{\hat}
\DeclareMathAlphabet{\mathpgoth}{OT1}{pgoth}{m}{n}
\DeclareMathAlphabet{\mathesstixfrak}{U}{esstixfrak}{m}{n}
\DeclareMathAlphabet{\mathboondoxfrak}{U}{BOONDOX-frak}{m}{n}
\definecolor{darkred}{rgb}{0.5,0,0}
\definecolor{darkgreen}{rgb}{0,0.5,0}
\definecolor{darkblue}{rgb}{0,0,0.5}
\newtheorem{theorem}{Theorem}[section]
\newtheorem{corollary}[theorem]{Corollary}
\newtheorem{proposition}[theorem]{Proposition}
\newtheorem{lemma}[theorem]{Lemma}
\newtheorem{lem}[theorem]{}
\theoremstyle{definition}
\newtheorem{definition}[theorem]{Definition}
\theoremstyle{remark}
\newtheorem{remark}[theorem]{Remark}
\newtheorem{example}[theorem]{Example}
\newcommand{\blem}{\begin{lem} \rm}
\newcommand{\elem}{\end{lem}}
\newcommand\M{\mathcal{M}}
\renewcommand\M{\mathcal{M}}
\newcommand\XX{\mathbb{X}}
\newcommand{\R}{\mathbb{R}}
\newcommand{\C}{\mathbb{C}}
\newcommand{\CP}{\mathbb{C}P}
\newcommand{\cW}{\mathcal{W}}
\newcommand{\Z}{\mathbb{Z}}
\renewcommand{\P}{\mathbb{P}}
\newcommand{\vv}{\check}
\newcommand{\on}{\operatorname}
\newcommand\white{{\includegraphics[width=.05in]{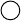}}}
\newcommand\black{{\includegraphics[width=.05in]{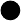}}}
\newcommand{\ainfty}{{$A_\infty$\ }}
\newcommand{\ab}{\ab}
\newcommand{\Edge}{\on{Edge}}
\newcommand{\Aug}{\on{Aug}}
\newcommand{\Cliff}{\on{Cliff}}
\newcommand{\Hopf}{\on{Hopf}}
\newcommand{\Ver}{\on{Vert}}
\renewcommand{\ker}{ \on{ker}}
\newcommand{\ann}{{\on{ann}}}
\newcommand\dirac{/\kern-1.2ex\partial} 
\newcommand\qu{/\kern-.7ex/} 
\newcommand\lqu{\backslash \kern-.7ex \backslash} 
\newcommand\dr{r_+ \kern-.7ex - \kern-.7ex r_-}
\newcommand{\labell}\label
\renewcommand{\d}{{\on{d}}}
\newcommand{\ol}{\overline}
\newcommand\eps{\epsilon}
\newcommand{\ti}{\tilde}
\newcommand\cR{\mathcal{R}}
\newcommand\cI{\mathcal{I}}
\newcommand\Rep{\on{Rep}}
\newcommand\ev{\on{ev}}
\newcommand\Vect{\on{Vect}}
\renewcommand\ul{\underline}
\newcommand\grad{\on{grad}}
\newcommand\bdefn{\begin{definition}}
\newcommand\edefn{\end{definition}}
\newcommand\bea{\begin{eqnarray*}}
\newcommand\eea{\end{eqnarray*}}
\newcommand\bcv{\left[ \begin{array}{r} }
\newcommand\ecv{\end{array} \right] }
\newcommand\bma{\left[ \begin{array}{l} }
\newcommand\ema{\end{array} \right]}
\newcommand\ben{\begin{enumerate}}
\newcommand\een{\end{enumerate}}
\newcommand\bex{\begin{example}}
\newcommand\bsj{\left\{ \begin{array}{rrr} }
\newcommand\esj{\end{array} \right\}}
\newcommand{\Newt}{\on{Newt}}
\newcommand\eex{\end{example}}
\newcommand\sx{*\kern-.5ex_X}
\def\mathunderaccent#1{\let\theaccent#1\mathpalette\putaccentunder}
\def\putaccentunder#1#2{\oalign{$#1#2$\crcr\hidewidth \vbox
to.2ex{\hbox{$#1\theaccent{}$}\vss}\hidewidth}}
\renewcommand{\aa}{\mathfrak{a}}
\renewcommand{\ab}{\on{ab}}
\newcommand{\bb}{\mathfrak{b}}
\renewcommand{\aa}{\mathfrak{a}}
\newcommand{\cc}{\mathfrak{c}}
\renewcommand{\ss}{\mathfrak{s}}
\newcommand\cwo[1]{ { \color{darkred}  } }
\newcommand\bigDiamond{\mathop{\mathpalette\bigDi@mond\relax}}
\newcommand\bigDi@mond[2]{%
  \vcenter{\hbox{\m@th
    \scalebox{\ifx#1\displaystyle 2\else1.2\fi}{$#1\Diamond$}%
  }}%
}
\newcommand\bigLozenge{\mathop{\mathpalette\bigL@zenge\relax}}
\newcommand\bigL@zenge[2]{%
  \vcenter{\hbox{\m@th
    \scalebox{\ifx#1\displaystyle 2\else1.2\fi}{$#1\blacklozenge$}%
  }}%
}
\definecolor{darkred}{rgb}{0.5,0,0}
\definecolor{darkpurple}{rgb}{0.5,0,.5}
\definecolor{darkpink}{rgb}{0,.5,0.5}
\definecolor{cyan}{rgb}{.25,0,0.75}
\definecolor{darkgreen}{rgb}{0,0.5,0}
\definecolor{darkblue}{rgb}{0,0,0.5}
\begin{document}

\title[Augmentation varieties and disk potentials ]{Augmentation varieties and disk potentials III}


\author{Kenneth Blakey}
\address{Department of Mathematics,182 Memorial Drive, Cambridge, MA 02139, U.S.A.} \email{kblakey@mit.edu}

\author{Soham Chanda}
\address{Mathematics-Hill Center, Rutgers University, 110
  Frelinghuysen Road, Piscataway, NJ 08854-8019, U.S.A.  } \email{sc1929@math.rutgers.edu}

\author{Yuhan Sun}
\address{Huxley Building,
South Kensington Campus,
Imperial College London,
London,
SW7 2AZ, U.K}\email{yuhan.sun@imperial.ac.uk}

  \author{Chris Woodward}
\address{Mathematics-Hill Center, Rutgers University, 110
  Frelinghuysen Road, Piscataway, NJ 08854-8019, U.S.A.}
\email{ctw@math.rutgers.edu}

\thanks{Chanda and Woodward were partially supported by NSF grant DMS 
  2105417 and Sun was partially supported by the
EPSRC grant EP/W015889/1.
 Any opinions, findings, and conclusions or recommendations 
  expressed in this material are those of the author(s) and do not 
  necessarily reflect the views of the National Science Foundation.  }

\begin{abstract}  This is the third in a series of papers in which 
we construct  Chekanov-Eliashberg algebras for Legendrians in circle-fibered
contact manifolds and study the associated augmentation varieties.  In 
this part, we prove that for connected Legendrian covers of monotone Lagrangian tori, the augmentation variety is equal to the image of the zero level set of the disk potential, as suggested by  Dimitroglou-Rizell-Golovko \cite{dr:bs}.  In particular, 
we show that Legendrian lifts of Vianna's exotic tori are not Legendrian isotopic,  as conjectured in \cite{dr:bs}.   Using related ideas, we show that the Legendrian lift of the Clifford torus admits no exact fillings, extending results of  Dimitroglou-Rizell \cite{dr:few} and Treumann-Zaslow \cite{treumann:cubic} in dimension two.  We consider certain disconnected Legendrians, and show, similar to another suggestion of  Aganagic-Ekholm-Ng-Vafa \cite{aganagic}, that the components of the augmentation variety correspond to certain partitions and each component is defined by a (not necessarily exact) Lagrangian filling. 
\end{abstract}

\maketitle

\tableofcontents

\section{Introduction} 

In this third part in the series, we construct an 
augmentation variety associated to Legendrians in circle-fibered contact
manifolds, and prove that it is a Legendrian isotopy invariant.   The augmentation variety of a Legendrian $\Lambda$ is a subset of the representation variety 
\[ \Aug(\Lambda ) \subset \Rep(\Lambda) \]
defined by the ideal in the Chekanov-Eliashberg algebra that is the union of 
kernels of augmentations.  It  is closely  related to the space of fillings of the Legendrian \cite{ref1,ref2,ref3,ref4,ref5,ref6,ref7,riz:lift,ref9,vafaetal,ref11,ref12,ref13,ref14,ref15}
and as such plays a role in the possible desingularizations of a singular Lagrangian in a symplectic manifold.  In particular, we will show that any tamed filling 
defines a subset of the augmentation variety defined by polynomials
that are killed by the corresponding augmentation.    We denote by 
\[ \Aug_{\on{geom}}(\Lambda) \subset \Aug(\Lambda) \]
the locus of the augmentation variety of a Legendrian $\Lambda$ defined by tamed fillings; it is also a Legendrian isotopy invariant.  In examples, each tamed filling $L$ defines an irreducible component 
\[ \Aug_L(\Lambda) \subset \Aug_{\on{geom}}(\Lambda) \subset \Aug(\Lambda) ;\]
it would be interesting to know whether this is always the case.  
In particular, the Harvey-Lawson Lagrangian considered
as a filling has augmentation variety equal to that of the Legendrian.
The same phenomenon repeats itself for Legendrians associated to other Fano 
toric varieties. 
The study of fillings and augmentation varieties is related to the following question in mirror symmetry.
The mirror symmetry conjectures suggest that the space of deformations of a  Lagrangian brane should have 
 a complex analytic structure, and examples show that such a deformation space is only well-behaved if one includes certain surgery operations.  In this surgery operation, 
the Lagrangian is modified locally by replacing one Lagrangian filling of a Legendrian by 
another.  Thus the space of fillings describes the possible surgery operations.

%
The main result is  a relationship between
augmentation varieties and potentials suggested by 
Dimitroglou-Rizell-Golovko in \cite[Conjecture 9.1]{dr:bs}.  
Let 
\[ \Pi \subset Y \] 
be a compact, connected, monotone, relatively spin Lagrangian.  The disk potential is a polynomial function 
\[ W_\Pi: \Rep(\Pi) \to \C \]
defined by a count of Maslov two holomorphic disks passing through a generic point in $\Pi$.  In the language of \ainfty algebras, it
has the following interpretation:  Let 
\[ m_d: CF(\Pi)^{\otimes d} \to CF(\Pi) \]
be the structure maps of the Fukaya algebra $CF(\Pi)$ of the Lagrangian $\Pi \subset Y$, defined 
for simplicity over $\C$. The zero-th structure map 
\[ m_0: CF(\Pi)^{\otimes 0} := \C \to CF(\Pi) \] 
has image $m_0(1) \in CF(\Pi)$ the {\em curvature} of the Fukaya algebra $CF(\Pi)$.  The projective version of the Maurer-Cartan equation
requires that $m_0(1)$ is a multiple $w$ of the unit $1_\Pi \in
CF(\Pi)$, in which case one says that $CF(\Pi)$ is projectively flat.  We say that a projectively flat $CF(\Pi)$ is {\em flat} if $w$ vanishes.  
Under suitable monotonicity assumptions $CF(\Pi)$ is automatically
projective flat and  viewing $w(y)$ as a function of the
local system $y$ on $\Pi$ defines the potential
$ W_\Pi : \Rep(\Pi) \to \C .$  
The zero level
set of the potential is then the space of absolute, rather than
projective, Maurer-Cartan solutions.  

In the version of Legendrian contact homology considered
in this paper, we have the following relationship between the disk potential of the Lagrangian projection and the augmentation variety.
Such a relationship was conjectured in Dimitroglou-Rizell-Golovko \cite{dr:bs} in the Ekholm-Entyre-Sullivan model \cite{ees:leg} on the basis of computations for  the Clifford and Chekanov tori.  Let $Z$ be a negatively curved circle bundle over $Y$ and let $\Lambda \subset Z$ be a horizontal lift of $\Pi$ and 
denote the map on representation varieties induced by $p$ 
\[ \Rep(p): \Rep(\Pi) \to \Rep(\Lambda) .\]


\begin{theorem} \label{augpot}  
Suppose that $(Z,\Lambda)$ is a tame pair and the projection $\Pi \subset Y$ 
is an embedded submanifold. The augmentation variety $\Aug(\Lambda)$ is equal to the image $\Rep(p)(
W_\Pi^{-1}(0))$ of the zero level set of the disk potential $W_\Pi$.
  \end{theorem}

One fundamental example for us is a horizontal lift of the Clifford torus
\begin{equation} \label{picliff} 
\Pi_{\Cliff} = \Set{ [ z_1 : \ldots : z_n ] \ | \ |z_i|^2 = |z_j|^2 \text{ for all } i,j } \cong (S^1)^{n-1}
\subset \C P^{n-1} 
\end{equation}
in complex projective space $Y = \CP^{n-1}$. \label{rep:horizlift}
A horizontal lift of the Clifford torus to the unit circle bundle of the tautological line bundle is a Legendrian torus in the standard contact sphere $Z = S^{2n-1}$: 
\begin{equation} \label{cliffleg}    \Lambda_{\Cliff} = \Set{ (z_1,\ldots, z_n) \in \C^n |  \begin{array}{l}
                                                     |z_1|^2 = \ldots
                                                     = |z_n|^2 = \frac{1}{n} \\  z_1 
    \ldots z_n \in (0,\infty)  \end{array} } \cong (S^1)^{n-1} \subset 
  S^{2n-1} .\end{equation}
%
The other lifts are obtained by the $S^1$ action on $S^{2n-1}.$ The map
\[ \C^n  - \{ 0 \} \to \C P^{n-1}, \quad z \mapsto \on{span}(z) \] 
restricts an $n$-fold cover of $\Pi_{\Cliff}$.  

For example, in the case $\Pi \subset \CP^2$ is the Clifford torus then one obtains 
\[ W_\Pi: \Rep(\Pi) \to \C, \quad (\hat{y}_1, \hat{y}_2) \mapsto 
- \hat{y}_1 - \hat{y}_2 + \hat{y}_1^{-1} \hat{y}_2^{-1}  \]
in coordinates $\hat{y}_1,\hat{y}_2$ on $\Rep(\Pi) \cong T^2$, with each term 
corresponding to one of the three Maslov-index-two disks in the
complex projective plan bounding the Clifford torus.   We have 
in terms of coordinates $y_1,y_2$ on $\Rep(\Lambda)$
\[ - \hat{y}_1 - \hat{y}_2 + \hat{y}_1^{-1} \hat{y}_2^{-1}  = 
( - \hat{y}_1^2 \hat{y}_2 - \hat{y}_1 \hat{y}_2^2 + 1 )
 \hat{y}_1^{-1} \hat{y}_2^{-1}  = (- y_1 - y_2 + 1)\hat{y}_1^{-1} \hat{y}_2^{-1} . \] 
 It follows that the image of the zero-level set (by the Theorem,
 equal to the augmentation variety) is 
 \[ \Rep(p)(
W_\Pi^{-1}(0)) = \{ y_1 + y_2 = 1 \} \subset \Rep(\Lambda) .\]

As a corollary of Theorem \ref{augpot} we show the existence of infinitely many Legendrian tori in odd dimensional spheres with the standard contact structure which are pair-wise non-isotopic. In the case of the lifts of Vianna tori and the tori in Chanda-Hirschi-Wang \cite{chw:tori}, invariance of the augmentation variety under Legendrian isotopy implies that these Legendrian tori are non-isotopic. 

\begin{corollary}(Corollary \ref{lotsofleg} below)  For $n > 1$, an odd-dimensional sphere with standard contact structure $(S^{2n-1},\xi_{std})$ has infinitely many Legendrian tori which are not Legendrian isotopic to each other.
 \end{corollary}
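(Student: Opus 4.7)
The plan is to realize the infinite family of non-isotopic Legendrian tori in $(S^{2n-1},\xi_{std})$ as connected Legendrian lifts of the monotone Lagrangian tori constructed by Vianna (in $\CP^2$) and by Chanda--Hirschi--Wang (in $\CP^{n-1}$ for larger $n$), and to distinguish them using Theorem \ref{augpot}. The hypotheses of that theorem are in place: $\CP^{n-1}$ is monotone with minimal Chern number $n \geq 2$, each $\Pi_k$ has minimal Maslov number two, and $(S^{2n-1},\xi_{std})$ is a prequantization circle bundle over $(\CP^{n-1},\omega_{FS})$ whose curvature form is a negative multiple of the Fubini--Study form. Let $\Pi_k \subset \CP^{n-1}$ denote the countable family of such tori, and let $\Lambda_k \subset S^{2n-1}$ be a connected horizontal Legendrian lift, which exists by the monotonicity of $\Pi_k$.

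With this setup, Theorem \ref{augpot} yields
\[ \Aug(\Lambda_k) \;=\; \Rep(p)\bigl(W_{\Pi_k}^{-1}(0)\bigr) \;\subset\; \Rep(\Lambda_k) \]
for each $k$. The map $\Rep(p):\Rep(\Pi_k)\to\Rep(\Lambda_k)$ is a surjection of algebraic tori whose kernel is the one-dimensional subtorus dual to the fibre class of $S^{2n-1}\to\CP^{n-1}$. Consequently $\Aug(\Lambda_k)$ is the hypersurface in $\Rep(\Lambda_k) \cong (\C^\times)^{n-1}$ cut out by the Laurent polynomial obtained from $W_{\Pi_k}$ by projecting its Newton polytope $\Newt(W_{\Pi_k})$ along the canonical lattice direction spanned by the fibre class.

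The main step is the combinatorial claim that the invariant used by Vianna and by Chanda--Hirschi--Wang to distinguish the $\Pi_k$ as Lagrangians --- pairwise distinctness of $\Newt(W_{\Pi_k})$ modulo the $\mathrm{GL}(n-1,\Z)$ action on the character lattice of $\Rep(\Pi_k)$ --- descends to pairwise distinctness of the Newton polytopes defining $\Aug(\Lambda_k)$ modulo the $\mathrm{GL}(n-1,\Z)$ action on the character lattice of $\Rep(\Lambda_k)$. The projection direction is intrinsic to the contact framing, so any change of basis for $H_1(\Lambda_k;\Z)$ induces a lattice transformation of $H_1(\Pi_k;\Z)$ preserving the fibre direction, and hence an affine transformation of the projected polytope. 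The check that Vianna's infinitely many polytopes remain distinct after this projection is a direct, if delicate, combinatorial argument; I expect this to be the main technical obstacle, and the strategy is to identify a numerical invariant of the projected polytopes (for instance, the affine lengths of their edges) that inherits pairwise distinctness from the original polytopes.

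Finally, the Legendrian isotopy invariance of $\Aug(\Lambda)$ established earlier in the series implies that two Legendrians whose augmentation varieties are not equivalent under the ambient $\mathrm{GL}$ action cannot be Legendrian isotopic. Combined with the previous step, this forces the family $\{\Lambda_k\}_{k \in \N} \subset (S^{2n-1},\xi_{std})$ to consist of pairwise non-isotopic Legendrian tori, completing the proof.
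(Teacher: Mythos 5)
Your overall strategy coincides with the paper's: lift the Vianna / Chanda--Hirschi--Wang tori to Legendrians in $S^{2n-1}$, apply Theorem \ref{augpot} together with the Legendrian isotopy invariance of $\Aug(\Lambda)$, and distinguish the resulting augmentation varieties via the Newton polytopes of the potentials, citing the lattice-length argument of Vianna and Chanda--Hirschi--Wang for the final combinatorial step. However, there is a genuine gap and a factual error.

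The gap: you pass from ``the augmentation \emph{varieties} are equivalent'' to ``the Newton polytopes of the defining Laurent polynomials agree up to $GL(n-1,\Z)$'' without justification. The augmentation variety only remembers the radical of the ideal, so a priori it determines the defining polynomial only up to taking reduced/irreducible factors, and the Newton polytope of an arbitrary polynomial with the given zero locus need not match $\Newt(W_{\Lambda})$. This is exactly why the paper proves Lemma \ref{irredofpot} (irreducibility of the augmentation polynomial $W_{\Lambda^{abc}_{n-1}}$, by induction on dimension using the simplex structure of the Newton polytope and an irreducibility criterion of \cite{gaopoly}); irreducibility guarantees that the hypersurface determines the polynomial up to a scalar and a monomial unit, hence determines $\Newt(W_{\Lambda})$ up to translation. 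Without this lemma (or the scheme-theoretic refinement of Remark \ref{rm:augscheme}), your argument does not close.

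The error: you assert that $\Rep(p):\Rep(\Pi)\to\Rep(\Lambda)$ has a one-dimensional kernel dual to the fibre class, and that $\Aug(\Lambda)$ is cut out by the polynomial obtained by \emph{projecting} $\Newt(W_{\Pi})$ along that direction. In this setting $\Lambda\to\Pi$ is a finite covering of $(n-1)$-tori, so $\Rep(p)$ is a finite surjection and no dimension is lost; the relation is $W_{\Lambda}=x^{-v}W_{\Pi}$ (a monomial shift plus a change of lattice basis), so the Newton polytope is preserved up to translation and $GL(n-1,\Z)$. Consequently the ``delicate combinatorial check after projection'' that you flag as the main technical obstacle is not an obstacle at all --- the distinctness of the polytopes is inherited directly from Vianna and Chanda--Hirschi--Wang --- whereas the actual technical content you need, the irreducibility of the augmentation polynomial, is missing from your argument.
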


The examples of
disconnected Legendrians in Theorem \ref{partitions} show that for
disconnected Legendrians the augmentation variety may be reducible,
and so not directly related to the Maurer-Cartan space for the Lagrangian projection. In some cases, we may also compute the geometric
augmentation variety:

\begin{theorem}  Suppose that $Y$ is a monotone toric variety  and $\Pi$ is a monotone Lagrangian torus orbit with a non-trivial spin structure.
Then  the geometric augmentation variety $\Aug_{\on{geom}}(\Lambda)$ is equal to $\Aug(\Lambda)$.
\end{theorem}
      
By studying the topology of the moduli spaces of pseudoholomorphic maps, 
we generalize a result of
Dimitroglou-Rizell \cite{dr:few} and Treumann-Zaslow
\cite{treumann:cubic} ruling out exact fillings of the Clifford Legendrian, see Dimitroglou-Rizell-Golovko
\cite[p.3]{dr:bs}, to arbitrary dimension:

\begin{theorem} (Theorem \ref{nofill} below) The Clifford
  Legendrian $\Lambda_{\Cliff} \cong T^{n-1} \subset S^{2n-1}$ has no exact Lagrangian
  filling for $n > 2$.  It has no spin Lagrangian filling for the
  trivial spin structure, and for any non-trivial spin structure its
  augmentation variety $\Aug(\Lambda_{\Cliff})$ is defined by the augmentation
  for the Harvey-Lawson filling $L_{(1)}$ for some choice of spin structure.
\end{theorem}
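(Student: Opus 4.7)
The plan is to reduce all three assertions to computations with the disk potential of the Clifford torus via Theorem \ref{augpot}. Recall that $\Pi_{\Cliff} \subset \CP^{n-1}$ bounds exactly $n$ families of Maslov-index-two holomorphic disks, one for each toric divisor, so in coordinates $(\hat y_1, \ldots, \hat y_{n-1})$ on $\Rep(\Pi_{\Cliff})$ the disk potential reads
$$W_{\Pi_{\Cliff}}(\hat y) = \sigma_1 \hat y_1 + \cdots + \sigma_{n-1} \hat y_{n-1} + \sigma_n (\hat y_1 \cdots \hat y_{n-1})^{-1},$$
with signs $\sigma_i \in \{\pm 1\}$ fixed by the spin structure on $\Pi_{\Cliff}$: all $\sigma_i$ agree for the trivial spin structure, while a non-trivial spin structure flips an even number of them. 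Theorem \ref{augpot} then identifies $\Aug(\Lambda_{\Cliff})$ with the image $\Rep(p)(W_{\Pi_{\Cliff}}^{-1}(0))$.

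For the third (Harvey-Lawson) claim, I would compute the augmentation $\varepsilon_{L_{(1)}}$ directly from the filling. Since $L_{(1)} \cong \R \times T^{n-1}$ carries a residual $T^{n-1}$-symmetry descending from the diagonal maximal torus of $SU(n)$, the rigid punctured disks bounding $L_{(1)}$ are classified by the toric geometry and can be enumerated explicitly. With an appropriate choice of spin structure on $L_{(1)}$, the resulting augmentation cuts out an ideal in $\Rep(\Lambda_{\Cliff})$ whose zero locus coincides with $\Rep(p)(W_{\Pi_{\Cliff}}^{-1}(0))$ for a non-trivial spin structure on $\Pi_{\Cliff}$. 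Matching these two computations yields the desired equality $\Aug_{L_{(1)}}(\Lambda_{\Cliff}) = \Aug(\Lambda_{\Cliff})$.

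For the non-existence assertions the main tool is orientation bookkeeping. Any compact, relatively spin Lagrangian filling $L$ of $\Lambda_{\Cliff}$ induces a spin structure on $\Pi_{\Cliff}$ via the projection $p$, and the signs $\sigma_i$ in $W_{\Pi_{\Cliff}}$ are determined by this induced spin structure through an index-theoretic computation of the determinant lines of the Cauchy-Riemann operators on the Maslov-two disks bounding $\Pi_{\Cliff}$. The key observation is that the sign pattern arising from any actual filling is always non-trivial, ruling out fillings inducing the trivial spin structure on $\Pi_{\Cliff}$. For exact fillings with $n > 2$, one combines this with an Ekholm-type duality $H^*(L;\C) \cong H^{n-*}(\Lambda_{\Cliff};\C)$ available for exact fillings, which constrains the Betti numbers of any such $L$; comparing with the structure of $W_{\Pi_{\Cliff}}^{-1}(0)$ -- in particular that the Harvey-Lawson filling already accounts for the entire augmentation variety as a non-exact filling -- yields a contradiction that is absent in dimension $n=2$.

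The principal obstacle is the orientation bookkeeping: tracking how spin structures on the filling, on $\Pi_{\Cliff}$, and on $\Lambda_{\Cliff}$ transform under the projection $p$, and relating them to the signs of individual disk contributions through gluing of determinant line bundles, requires careful analysis. Once this matching is in place, the algebraic comparison between $\varepsilon_{L_{(1)}}$ and $W_{\Pi_{\Cliff}}^{-1}(0)$ becomes a routine Laurent polynomial calculation, and the topological exclusion of exact fillings for $n > 2$ reduces to a Betti-number count against the algebraic geometry of $W_{\Pi_{\Cliff}}^{-1}(0)$.
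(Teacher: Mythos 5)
The Harvey--Lawson part of your proposal tracks the paper's argument: one computes the augmentation $\varphi(L_{(1)},0)$ on the classical generators ($\varphi(\cc_i)=0$ for $i<n-1$ and $\varphi(\cc_{n-1})=\ln(1\pm[\mu_1]\pm\cdots\pm[\mu_{n-2}])$, the multiple-cover series coming from Blaschke covers of the basic disk), observes that the projection of $\Aug_{(L_{(1)},0)}(\Lambda)$ onto the first $n-2$ coordinates is surjective so that this locus is a hypersurface, and concludes by irreducibility of $W_\Lambda^{-1}(0)$. The trivial-spin-structure part is directionally right but you assert the key point rather than prove it: the actual mechanism is not that "any filling induces a non-trivial spin structure on $\Pi$," but that with the trivial spin structure on $\Lambda$ all $n$ minimal-energy terms of $\varphi(\delta(\aa))=\varphi(1+y_1+\cdots+y_{n-1})$ carry coefficient $+1$ in the completed ring $\hat G(L)$, while every correction coming from $\varphi(\cc_i)$ or from the bounding cochain $b$ lies in strictly positive energy filtration; hence the sum cannot vanish. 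This is a positivity/filtration argument on the Legendrian side, and it needs to be made explicit.

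The serious gap is the non-existence of exact fillings for $n>2$. There is no duality $H^*(L)\cong H^{n-*}(\Lambda)$ for exact fillings; what is true, and what the paper uses, is that the image of $H^\bullet(L)\to H^\bullet(\partial L)$ is maximally isotropic for the Poincar\'e pairing, so the kernel of $\iota_*:H_1(\Lambda,\Z_2)\oplus H_{n-2}(\Lambda,\Z_2)\to H_\bullet(L,\Z_2)$ has dimension at most $n-1$. A Betti-number comparison with $W_\Lambda^{-1}(0)$ cannot close the argument: an exact filling yields an exact augmentation, and the only immediate constraint is that a sum of $n$ terms, each equal to $\pm 1$ after integrating $\pm 1\pm\iota_*\mu_1\pm\cdots\pm\iota_*\mu_{n-1}$ over $L$, vanishes. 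For $n$ odd this is already a parity contradiction, which is the paper's quick argument (phrased geometrically as: the $n$ rigid buildings asymptotic to $\aa$ must pair up as ends of one-dimensional components of $\M(L)_1$, and $n$ is odd). For $n$ even there is no contradiction at this level, and the paper must work considerably harder: it first extracts from the one-dimensional moduli spaces of buildings the degree-one relations $\iota_*\mu_1=0$ and $\iota_*\mu_{2i}=\iota_*\mu_{2i+1}$, and then, using once-marked moduli spaces constrained on the codimension-two Pontrjagin-product cycles $A=A_1\cdots A_{i-1}A_{i+2}\cdots A_{n-1}$, produces a further $n/2$ relations among the codegree-one classes $\check\mu_i\in H_{n-2}(\Lambda,\Z_2)$. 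Together these span an isotropic subspace $I\oplus\hat I$ of dimension $>n-1$, contradicting the maximal-isotropy bound. Your proposal contains no mechanism for producing the codegree-one relations, and that is precisely where the difficulty of the even-dimensional case lies; the $n=2$ exception is explained by the failure of the dimension count $\dim(I)+\dim(\hat I)>n-1$, not by any feature of $W^{-1}(0)$.
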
 

Here the Harvey-Lawson filling is an example of an asymptotically-cylindrical Lagrangian filling of the Clifford Legendrian.  Let
\[ a_1,\ldots, a_n \in \R \]
be 
real numbers with exactly two being zero.  Define as in Joyce
\cite[(37)]{joyce}
\begin{equation} \label{hl}
L_{(1)} = \Set{ (z_1,\ldots, z_n) \in \C^n | \begin{array}{l} |z_i|^2 - a_i^2
    = |z_j|^2 - a_j^2 , \forall i, j  \\
                                       z_1 \ldots z_n \in
                                       [0,\infty) \end{array} } .
\end{equation} 
The proof proceeds by showing that the image of the restriction map to the boundary in cohomology of the moduli space for any filling would be too large for the image to be a maximally isotropic subspace.  

\begin{remark}
    We frequently use the results in the first two papers in this series \cite{BCSW1,BCSW2} and cite them with prefixes I- and II- respectively.
\end{remark}

\section{The augmentation variety}

In this section,  we define an analog of the augmentation variety of 
Ng \cite{ng:framed}; see also  Aganagic-Ekholm-Ng-Vafa \cite{aganagic},
Diogo-Ekholm \cite{diogo2020augmentations} and Gao-Shen-Weng \cite{gaoshenweng}
for definitions in other contexts.  We make various computations of augmentation varieties, and in particular prove Theorem \ref{augpot} from the introduction.

First, we recall the definition of the Chekanov-Eliashberg algebra \cite{BCSW1,BCSW2} in circle-fibered contact manifolds. Let $(Y,\omega_Y)$ be a closed symplectic manifold such that $[\omega_Y] \in H^2(Y,\Z)$ is an integral cohomology class. There exists a principle $S^1$-bundle $p:Z\to Y$ whose first Chern class is $- [\omega_Y] \in  H^2(Y,\Z)$ together with a connection one-form form $\kappa \in \Omega^1(Z)$ such that:
\begin{itemize}
        \item $\alpha = - \kappa$ is a contact form on $Z$;
        \item the curvature form of $\kappa$ is $
        - \omega_Y$, that is, $\d\kappa = 
        - p^*\omega_Y$;
        \item and the generating vector field defining the principal $S^1$-action on $Z$ coincides with opposite of the Reeb vector field $R_\alpha$ of $\alpha$.
    \end{itemize}
We call such a contact manifold $(Z,\alpha)$ a \textit{circle-fibered contact manifold}. For a Legendrian submanifold $\Lambda$ in $Z$, we write $\Pi:=p(\Lambda)\subset Y$, which is a (possibly immersed) Lagrangian in $Y$.

The Reeb orbits of this contact form $\alpha$ are multiple covers of the circle fibers. Our convention identifies a circle fiber with $\R/\Z$. In other words, the action of a simple Reeb orbit is one.

A circle-fibered contact manifold $(Z,\alpha)$ with base $(Y, \omega_{Y})$ is called {\em  tame} if for some constant $\tau_Z \ge 1$ and integral symplectic form
\[ \omega_{Y,0} \in \Omega^2(Y,\R), \quad [\omega_{Y,0}] \in H^2(Y,\Z)  \] 
we have
\[   \omega_Y = \tau_Z \omega_{Y,0} \]
and the base is monotone in the sense that 
there exists a {\em  monotonicity constant} $\tau_Y \ge 3$ so that 
\[ c_1(Y) = \tau_Y [\omega_{Y,0}] \in H^2(Y,\R) .\] 
For any compact spin embedded Legendrian $\Lambda \subset Z$ with embedded
image $\Pi \subset Y$ we call $(Z,\Lambda)$ a {\em  tame pair}. The Chekanov-Eliashberg algebra for a tame pair is constructed in \cite{BCSW2}. Now we recall its definition. Note that the contact form $\alpha$ is of Morse-Bott type, the spaces of Reeb chords with ends on $\Lambda$ are disjoint union of smooth manifolds. The following provides a Morse model of the Chekanov-Eliashberg algebra.

A {\em Morse datum} for $(Z,\Lambda)$ consists of a pair  of vector fields on the space of Reeb chords $\cR(\Lambda)$ and on the cylinder on the Legendrian 
\[ \zeta_\white \in \Vect({\cR}(\Lambda)),  \quad  \zeta_\black \in \Vect(\R \times \Lambda)^{\R} \] 
arising as follows:
\begin{enumerate} 
\item  There exists a  Morse function on the space of Reeb chords
\[  f_\white:  {\cR}(\Lambda) \to \R ;\] 
so that $\zeta_\white$ is the gradient vector field:
\begin{equation} \label{zdiam} 
\zeta_\white := \grad(f_\white) \in \Vect({\cR}(\Lambda)) .\end{equation}
\item There exists a Morse function 
\[ f_\black :   \Lambda \to \R ;\] 
with gradient vector field 
\[ \grad(f_\black) \in \Vect(\Lambda) \] 
so that $\zeta_\black$  is a translation-invariant lift of $\grad(f_\black)$\label{zwhite}.
\end{enumerate}

A vector field $\zeta_\black \in \Vect(\R \times \Lambda)$ is {\em positive} if there exists a function
\[ a: \Lambda \to \R_{> 0} \]
so that 
\begin{equation} \label{zetablack} \zeta_\black = a(\lambda) \partial_s + {p}^* \grad(f_\black) 
\end{equation}
where 
\[ {p}^* : \Vect(\Lambda) \to \Vect(\R \times \Lambda)^\R  \] 
is the obvious identification of
translationally-invariant vector fields trivial in the $\R$-direction
with 
vector fields on $\Lambda$.

The limit of any Morse trajectory along any infinite length trajectory is a zero of the gradient vector field.   We introduce labels for the possible limits of the trajectories above as follows. 
Denote by 
\[ \ul{\R} \cong T\R \times \Lambda \] 
the translational factor in
$T(\R \times \Lambda) = T\R \oplus T \Lambda$.  The  zeroes of the vector field
$p_*(\zeta_\black)$ correspond to tangencies of $\zeta_\black$ with the translational factor:
\[  {p}_*(\zeta_\black)^{-1}(0) = \zeta_\black^{-1}(\ul{\R}) \subset
  \R \times \Lambda . \] 

\begin{definition} The union of the zeroes of the vector fields is denoted 
\begin{equation} \label{gens2} \cI(\Lambda) := 
\cI_\white(\Lambda) \cup \cI_\black(\Lambda), 
\quad  \cI_\white(\Lambda) := \zeta_\white^{-1}(0), \quad \cI_\black(\Lambda) := \zeta_\black^{-1}(\ul{\R}).
\end{equation}
Let $\cW(\Lambda)$ denote the space of ordered words in the generators
$\cI(\Lambda)$ defined as above:
\begin{equation} \label{words} 
\cW(\Lambda) = \bigcup_{d \ge 0} \cI(\Lambda)^d . \end{equation}
For any word $w  = \gamma_1 \ldots \gamma_k \in \cW(\Lambda)$ denote by 
\[ \ell(w) = k \in \Z_{\ge 0} \] 
the length of $w$ and by $\ell_\black(w)$ the number of {\em classical generators}
 $\gamma_i \in \cI_\black(\Lambda)$. The space of {\em contact chains} is 
the completion 
\begin{equation} \label{CE} CE(\Lambda) = \Set{ \sum_{i=1}^\infty c_i
    \Sigma_i |  \ \Sigma_i \in \cW(\Lambda), c_i \in \mhat{G}(\Lambda), \quad \lim_{i \to \infty}( \ell_{\black}(\Sigma_i)) = \infty }
\end{equation} 
of $\mhat{G}(\Lambda)$-valued functions on $\cW(\Lambda)$ with
respect to the filtration given by the classical word length $\ell_\black$. Here $\mhat{G}(\Lambda)$ is the completion of the group ring $\C[H_1(\Lambda)]$ over the first homology $H_1(\Lambda)$. If $\Pi=p(\Lambda)$ is a monotone Lagrangian, we can just use the the group ring $G(\Lambda)=\C[H_1(\Lambda)]$.

We also call $CE(\Lambda)$ the Chekanov-Eliashberg algebra of the Legendrian $\Lambda$, with multiplication given by concatenation of words; the
empty word is the unit $1$. Denote by
\[ CE_{\ell } (\Lambda) =  \bigoplus_{ \ell(w) = \ell}
\mhat{G}(\Lambda) w \subset CE(\Lambda)   \] 
the subspace generated by words $w$ of length $\ell$.    This ends the Definition.
\end{definition} 


\begin{definition} {\rm (Gradings)} Suppose that $\Pi \subset Y$ is monotone with monotonicity constant $\tau \in \R$.  Define the {\em real grading}
\[ \cI_\white (\Lambda) \to \R, \quad \gamma \mapsto \deg_\R(\gamma) = \on{ind}
(f_\white)(\gamma) + \tau \theta - 1 \] 
where $\theta$ is the action of the Reeb chord $\gamma$.  Define
\[ \cI_{\black}(\Lambda) \to \R, \quad \gamma \mapsto \deg_\R(\gamma) :=
\on{ind} (f_\black)(\gamma)- 1  . \]
For words define  the degree map as the sum of the degrees
of the factors:
\[ \cW(\Lambda) \to \R, \quad \gamma_1 \otimes \ldots \otimes \gamma_k \mapsto
  \deg_\R(\gamma_1 \otimes \ldots \otimes  \gamma_k) := \sum_{i=1}^k \deg_\R(\gamma_i) .\]
If $\Lambda$ is connected, then the actions of Reeb chords are multiples of $2/\tau$ and the real 
grading defines an $\Z$-grading.
%
We also recall that the generators have a $\Z_2$ grading $\deg_{\Z_2}$ which records the parity of the corresponding Morse index.
\end{definition}

\begin{definition}  An {\em augmentation}  is a chain algebra map
\[ \varphi: CE(\Lambda) \to G(\varphi) \] 
for some commutative $G(\Lambda)$-ring $G(\varphi)$, considered as a trivial complex. A {\em graded augmentation} is defined similarly, by requiring that $\varphi$ is a 
dga map and $G(\varphi)$ is concentrated in degree zero.  
\end{definition}

In particular, any augmentation must vanish on the
image of the differential $\delta$.    By the second part \cite{BCSW2} in this series, any tamed Lagrangian filling $L$ of $\Lambda$ gives rise to  an augmentation with target $G(\varphi) = \hat{G}(L)$
where $\hat{G}(L)$ is a completed group ring on the first homology $H_1(L)$.

\begin{example}  \label{algaug} Consider the dga $CE(\Lambda)$ for the Clifford
Legendrian $\Lambda \subset S^{2n-1}$ where $\Lambda$ has the standard spin
structure and is equipped with the standard Morse data.  The generators 
\[ \cc_1,\ldots,\cc_{n-1} \in \cI_\black(\Lambda) \] 
from II-Example \ref{II-cliffleg6} corresponding to the critical points of index one are degree zero.  These generators may be mapped to non-zero elements in $G(\varphi)$ under any graded augmentation.  The degree one generators are the 
classical generator $\bb \in \cI_\black(\Lambda)$ of Morse degree two, 
and the Reeb chord $\aa \in \cI_\black(\Lambda)$ of Morse degree zero 
and length $2\pi/n $.  The differentials of these degree one generators
are given by 
\begin{equation} \label{aarel}
\delta^{\ab,0}(\bb) = 0 , \quad \delta^{\ab,0}(\aa) = 
\pm 1 \pm  [\mu_1] \exp(\cc_1) \pm \ldots  \pm  [\mu_{n-1}] \exp(\cc_{n-1}) \end{equation}
where the signs depend on the choice of spin structure.
Define as coefficient ring
\[ G(\varphi) = \C [[ [\mu_1],\ldots,[\mu_{n-1}]]] .\]
Define a map $\varphi: CE(\Lambda) \to G(\varphi)$ by 
\[ \varphi(\mu_n) = \pm 1 \]
where the sign is chosen so that the relation 
given by \eqref{aarel} becomes 
\[ \varphi(\exp(\cc_{n-1})) = \varphi(1 \pm [\mu_1] \pm \ldots \pm [\mu_{n-2}]) ; \] 
this guarantees
that the logarithm 
\[ \varphi(\cc_{n-1}) := \ln(1 \pm [\mu_1] \pm \ldots \pm [\mu_{n-2}]) \] 
has zero constant term and is so well-defined in the completed coefficient ring.
Then $\varphi$ defines a graded augmentation.   This ends the Example.
\end{example}

We wish to extract from the space of augmentations
a subvariety of the abelian representation variety. 

\begin{definition} \label{augvar} {\rm (Augmentation variety)}
Let $(Z,\Lambda)$ be a tame pair. The {\em extended augmentation ideal}
\[ \ti{I}(\Lambda) = \bigcap_\varphi  {(\iota\circ \varphi)}^{-1}(0_{G(\varphi)})  \subset CE(\Lambda) \] 
is the set of elements
in $CE(\Lambda)$ in the kernel $\ker(\iota \circ \varphi)$ of every  augmentation 
\[ \varphi: CE(\Lambda)\otimes G(\varphi) \to G(\varphi) \]
where the coefficient ring $G(\varphi)$ is some abelian ring and \[\iota: CE(\Lambda) \to CE(\Lambda)\otimes G(\varphi), \;  a \mapsto a\otimes 1 \] is the tensoring-by-one morphism.
%
%
Denote {\em the abelianized complex}
\[ CE^{\ab}(\Lambda) = CE(\Lambda)/\sim \] 
to be the quotient of $CE(\Lambda)$ obtained by identifying two words that are equivalent up to re-ordering up to a sign determined by the grading of the letter
\[ a b \sim (-1)^{|a||b|}ba . \]
Denote by 
\[ \delta^{\ab}: CE^{\ab}(\Lambda)  \to CE^{\ab}(\Lambda)  \]
the map induced by the differential $\delta$.  Since each ring $G(\varphi)$ is abelian, 
$\widetilde{I}(\Lambda)$ is invariant under permutation of any elements in the constituent words, 
and so is the inverse image of an abelianized ideal
\[ \widetilde{I}^{\ab}(\Lambda) = \widetilde{I}(\Lambda) /\sim . \]
Suppose that $\Lambda$ is monotone, and $CE(\Lambda)$ is defined using the group ring
$\hat{G}(\Lambda)$ on $H_1(\Lambda)$. Choose 
\[ \cc_1,\ldots,\cc_k \in CE(\Lambda) \] 
in the span of the 
generators $\cI_\black(\Lambda)$ representing a basis of codimension one cycles 
given by $\mu_1,\ldots,\mu_k \in H_1(\Lambda)$.
Embed  the ring of functions on $\Rep(\Lambda)$ in $CE^{\ab}(\Lambda)$ by assigning to each weight 
\[ \lambda = (\lambda_1,\ldots, \lambda_k) \] 
the monomial 
\[ y^\lambda = y_1^{\lambda_1} \ldots y_k^{\lambda_k} \]
where 
\[ y_i  = [\mu_i]  \exp( \cc_i ) . \]
The {\em augmentation ideal} is 
  \[ I(\Lambda) = \pi(\widetilde{I}(\Lambda)) \cap \C[\Rep(\Lambda)]  .\] 
Let 
\[ \Aug(\Lambda) \subset \Rep(\Lambda)  \] 
be the variety defined by $I(\Lambda)$.   

\vskip .1in \noindent The $\R$-graded augmentation 
variety $\Aug_{\R}(\Lambda) \subset \Rep(\Lambda)$ 
is defined similarly, by allowing 
only $\R$-graded augmentations.  This ends the Definition.
\end{definition} 

\vskip .1in
\noindent One also might define the $\Aug(\Lambda)$ as the {\em scheme} defined by $I(\Lambda)$, rather than variety.   The arguments below will show that the scheme is also a Legendrian isotopy invariant,  but we avoid schemes since we have no application for this more refined invariant at the moment.

\begin{example} Let $\Lambda \subset S^{5}$ denote the Clifford Legendrian  with trivial 
 spin structure as in Example II-\ref{II-cliffleg6}.  
 Let $\aa \in CE(\Lambda)$ the degree one generator given by the minimal length chord, as in Example \ref{algaug}.  The differential
 \[ \delta^{\ab,0}(\aa)=  1 + [\mu_1] \exp(\cc_1) + [\mu_2] \exp(\cc_2) \] 
must map to zero under
 any augmentation $\varphi$, and so 
\begin{equation} \label{auginc}
\Aug(\Lambda) \subset \{ 1 + y_1 + y_2 \} . \end{equation} 
On the other hand, by Example \ref{algaug} there exists an augmentation
 which does not vanish on any polynomial in $y_1$, so 
 $\Aug_{L_{(1)}}(\Lambda)$ is a hypersurface.  It follows that the inclusion \eqref{auginc} is an equality.  \end{example} 

\begin{remark}  We have a natural inclusion
%
%
\[ \Aug_{\R}(\Lambda)  \subset \Aug(\Lambda) \]
induced by the reverse inclusion of ideals.  
\end{remark}

\begin{remark}  Multiple definitions of the augmentation variety appear in the literature. 
Ng \cite{ng:framed}, working with Legendrian two-tori, defines the augmentation variety as the space of points in the representation variety for which there exists an augmentation corresponding to that specialization of variables.   Gao-Shen-Weng \cite{gaoshenweng} define the augmentation variety as the  moduli space of augmentations.  Aganagic-Ekholm-Ng-Vafa \cite{aganagic} and  Diogo-Ekholm \cite{diogo2020augmentations} define the augmentation variety  as a subvariety of the quantized torus in the case of contact knot homology. 

A simpler version of the augmentation variety in our case would be  to consider the complex 
$CE_\white(\Lambda)$ generated by Reeb chords. 

The homology $HE_{\white}(\Lambda)$ has 
degree zero part $HE_{\white,0}(\Lambda)$ (with apologies for the notation) which is a commutative graded ring.
 We can therefore define the {\em lch spectrum} of prime ideals  
in $HE^{\ab}_{\white,0}(\Lambda)$ 
\begin{equation} \label{spectrum} 
\Aug_\white(\Lambda) = \on{Spec}(HE^{\ab}_{\white,0}(\Lambda)) . \end{equation}
This variety will also be shown to be a Legendrian isotopy invariant, but Lagrangian fillings do not necessarily define components of this augmentation variety.   There is a natural map 
\[ \Aug_\white(\Lambda) \to \Rep(\Lambda) \]
obtained from the embedding of functions in 
$HE_{\white,0}(\Lambda)$ as the coefficients of the zero length word. 
\end{remark}

\begin{proposition}\label{prop:cycle_independence}  Let $(Z,\Lambda)$ be a tame pair. 
    The augmentation varieties $\Aug(\Lambda)$ resp. $\Aug_\R(\Lambda)$ are independent of the choice of
    embedding $\C[\Rep(\Lambda)] \to CE^{\ab}(\Lambda,\hat{G}(\Lambda))$,
    that is, independent of the choice of cycles $\cc_1,\ldots,\cc_k $.
    \end{proposition}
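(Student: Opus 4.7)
The plan is to exhibit an intrinsic description of the augmentation ideal $I(\Lambda) \subset \C[\Rep(\Lambda)]$ that makes no reference to the embedding. Since $\Rep(\Lambda) = \Hom(H_1(\Lambda,\Z)_{\on{free}}, \C^\times)$ and its coordinate ring $\C[\Rep(\Lambda)] = \C[H_1(\Lambda,\Z)_{\on{free}}]$ are intrinsic to $\Lambda$, independence of the subvariety $\Aug(\Lambda) \subset \Rep(\Lambda)$ reduces to independence of its defining ideal.

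First, I would associate to every augmentation $\varphi : CE(\Lambda) \otimes G(\varphi) \to G(\varphi)$ a canonical ring homomorphism $\rho_\varphi : \C[\Rep(\Lambda)] \to G(\varphi)$ as follows. For each class $\mu \in H_1(\Lambda,\Z)_{\on{free}}$, pick any chain $\cc_\mu$ in the span of $\cI_\black(\Lambda)$ representing the codimension-one cycle Poincar\'e dual to $\mu$, and set
\[ \rho_\varphi(\mu) := \varphi([\mu]) \exp(\varphi(\cc_\mu)). \]
Any two such chain representatives $\cc_\mu, \cc_\mu'$ differ by an element of $\im(\delta)$, which is contained in $\widetilde{I}(\Lambda)$ since every augmentation is a chain map to a complex concentrated in degree zero. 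Hence $\varphi(\cc_\mu) = \varphi(\cc_\mu')$, and the assignment is well-defined on classes; extending multiplicatively yields a ring map on the group algebra.

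Next, I would compare this with the embedding $\iota$ associated to a particular basis $\cc_1,\dots,\cc_k$. On basis monomials $y_i = [\mu_i]\exp(\cc_i)$ one has $\varphi(\iota(y_i)) = \varphi([\mu_i])\exp(\varphi(\cc_i)) = \rho_\varphi(y_i)$, and multiplicativity forces $\varphi \circ \iota = \rho_\varphi$ on all of $\C[\Rep(\Lambda)]$. Consequently
\[ I(\Lambda) = \iota^{-1}(\widetilde{I}^{\ab}(\Lambda)) = \bigcap_\varphi \ker(\rho_\varphi), \]
and the right-hand side is manifestly independent of $\iota$. The same argument, with the intersection restricted to $\R$-graded or $\Z_2$-graded augmentations, yields independence of $\Aug_\R(\Lambda)$ and $\Aug_{\Z_2}(\Lambda)$.

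I anticipate no serious obstacle. The only technical point is ensuring that $\exp(\varphi(\cc_\mu))$ converges in $G(\varphi)$; this is already built into the framework through the completed group-ring structure on coefficient algebras for augmentations, as illustrated by Example \ref{algaug}.
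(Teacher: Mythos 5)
Your overall strategy is sound and matches the paper's: reduce everything to showing that $\varphi(\cc_\mu)=\varphi(\cc_\mu')$ for any two chain representatives of the same class in $H_1(\Lambda,\Z)_{\on{free}}$ and any augmentation $\varphi$. The packaging via the intrinsic map $\rho_\varphi$ is a nice reformulation. However, the one step you dispose of in a single sentence is exactly where the content of the proof lies, and as stated it is wrong: you claim that $\cc_\mu-\cc_\mu'$ lies in $\im(\delta)$, hence in $\widetilde I(\Lambda)$. What is true is only that $\cc_\mu-\cc_\mu'=\delta_{\on{Morse}}(\bb)$ for some Morse-degree-two chain $\bb$. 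The Chekanov--Eliashberg differential is not the Morse differential: applied to $\bb$ it produces
\[
\delta(\bb)=\cc_\mu-\cc_\mu'+\sum \delta_d(\bb,\cc_{j_1},\dots,\cc_{j_k})\,\cc_{j_1}\cdots\cc_{j_k},
\]
with correction terms coming from (constant and non-constant) holomorphic disks. So $\cc_\mu-\cc_\mu'$ itself is generally \emph{not} a coboundary, and the identity $\varphi(\delta(\bb))=0$ only gives $\varphi(\cc_\mu)-\varphi(\cc_\mu')=-\varphi(\text{corrections})$.

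The paper's proof is devoted precisely to killing these corrections: it invokes the skew-symmetry of the degree-zero structure coefficients (Proposition II-abelian, which requires the invariant perturbations making the Morse $A_\infty$ terms graded-commutative), so that the degree-zero words die under any algebra homomorphism to an abelian ring, and it uses gradedness of $\varphi$ to kill the words containing negative-degree letters. Without an argument of this kind your map $\rho_\varphi$ is not well defined, and the rest of the proposal, though correctly organized, does not go through. To repair it you should replace the sentence ``differ by an element of $\im(\delta)$'' with the computation of $\delta(\bb)$ above and the two vanishing arguments for the correction terms.
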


\begin{proof}   We will check that any two choices have difference that is a coboundary of the Chekanov-Eliashberg differential, up to terms that vanish under abelianization or projection to degree zero words. 
    Let
    \[ \cc_1',\ldots, \cc_k' \in \cI_\black(\Lambda) \] 
    be another choice of cycle
    representatives for $H_1(\Lambda)_{\on{free}}$ so that $[\cc_i] = [\cc_i']$ for all $i$. Denote the degree 1 monomials of the embedding $\C[\Rep(\Lambda)] \to CE^{\ab}(\Lambda,\hat{G}(\Lambda))$ using $\cc_i$ as $w_i$. Similarly, denote the monomials of the embedding using $\cc_i'$ as $w_i'.$
    By definition, there exist chains
    \[ \bb_i \in CE_\black(\Lambda ) \]
    of Morse degree two, and so real degree one, with
    \[ \delta_{\on{Morse}}(\bb_i) = \cc_i - \cc_i' .\]
    The contact differential applied to $\bb_i$ gives these terms plus additional terms involving constant holomorphic disks:
    \[ \delta(\bb_i) = \cc_i - \cc_i' + \sum_{j_1,\ldots, j_k} 
    \delta_d(\bb_i, \cc_{j_1},\ldots, \cc_{j_k}) \cc_{j_1} \ldots \cc_{j_k}
    \] 
    where $\delta_d(\bb_i, \cc_{j_1},\ldots, \cc_{j_k})$ is the coefficient
    of  $ \cc_{j_1} \ldots  \cc_{j_k}$ in $\delta_d(\bb_i)$, with notation from Proposition II-\ref{II-abelian}. By Proposition \ref{II-abelian}, we may assume that the structure coefficients 
\[ \delta_d(\bb_i, \cc_{j_1},\ldots, \cc_{j_k})  \in \C \subset \hat{G}(\Lambda) \]
of the Morse \ainfty algebra
of $\Lambda$ make the degree zero terms skew-symmetric. The skew-symmetric words  lie in the kernel of $\varphi$ since $\varphi$ is an algebra homomorphism and $\hat{G}(L)$ is abelian.  On the other hand,  $\varphi$ is graded and so vanishes on negative degree generators.  
Thus

\[ 0 = \varphi(\delta(\bb_i)) = \varphi(\cc_i) - \varphi(\cc_i'). \]
Moreover, since $\varphi$ is a dga map, 
$$ \varphi(p(y_1,\dots, y_k)) = \varphi(p(y'_1,\dots, y'_k)) $$
for any polynomial $p$ where $e^{w_i}=y_i$ and $e^{w'_i}=y'_i$. Finally, from the above equation, it readily follows that the augmentation ideal $I(\Lambda)$ is independent of the choice of cycles $\cc_1,\dots,\cc_k$.
\end{proof}

\begin{lemma}\label{singtoall}
    Let $(Z,\Lambda)$ be a tame pair and let $\varphi : CE(\Lambda,G(\varphi)) \to G(\varphi)$
    resp. $\varphi^{\ab} : CE^{\ab}(\Lambda,G(\varphi)) \to G(\varphi)$
        be an algebra map such that $\varphi(\delta(\aa)) = 0$ for all single letter generators $\aa \in \cI(\Lambda)$.  Then $\varphi$ resp. $\varphi^{\ab}$ is an augmentation.  
\end{lemma}

\begin{proof}
    We need to check $\varphi \circ \delta = 0$.  By the Leibniz rule,
    \[ \varphi(\delta(\aa_1 \aa_2)) = \varphi(\delta (\aa_1) \aa_2 \pm \aa_1 \delta(\aa_2)) =  \varphi(\delta (\aa_1) ) \varphi(\aa_2) \pm \varphi(\aa_1 ) \varphi(\delta(\aa_2))) =0  .\]
The lemma for $\varphi$ follows from an inductive argument on word length; the proof for $\varphi^{\ab}$ is similar. 
\end{proof}

\section{Relation to the disk potential}

In preparation for the construction of orientations we make the following splittings. 
Recall that $\cc_1,\ldots, \cc_k$ is a collection of cycles in $C_1(\Lambda)$ representing a basis in homology $H_1(\Lambda)$.   We write 
\[ \delta_{M,k} : C_k(\Lambda) \to C_{k-1}(\Lambda), \quad k \ge 0 \]
for the Morse differential, as usual taken with complex coefficients.  
We view $C_1(\Lambda)$ as a subspace of $CE_0(\Lambda)$, via the embedding as words
of length one.    Since $C_1(\Lambda)$ is finite dimension, there exist  splittings
\begin{equation} \label{eq:split} C_1(\Lambda) \cong \on{ker}(\delta_{M,1}) \oplus \on{im}(\delta_{M,1}) \cong \on{im}(\delta_{M,2}) \oplus H_1(\Lambda) \oplus  \on{im}(\delta_{M,1}). 
\end{equation}
This extends in an obvious way to a splitting with $G(\Lambda)$-coefficients.  
Given a ring $G(\varphi)$ and a map from (singular) homology of $\Lambda$,
\[ \varphi: H_1(\Lambda,G(\Lambda)) \to G(\varphi) \] 
we may extend to $C_1(\Lambda,G(\varphi))$ by defining $\varphi$ to be zero 
on the first and third factors in the last splitting in \eqref{eq:split}.
The map $\varphi$ then extends uniquely to words of all length in $CE_0(\Lambda)$ by the 
algebra homomorphism property. 

\begin{lemma} \label{lem:vanish} Let $(Z,\Lambda)$ be a tame pair with $\Lambda$ connected. Suppose that a map $\varphi: CE_0^{\ab}(\Lambda) \to G(\varphi)$ defined as above satisfies the property that $\varphi$ vanishes on the image of the restriction of $\delta$ to the degree one Reeb chords $\aa \in CE_1(\Lambda)$ of minimal action.  Then $\varphi$ defines an augmentation.
\end{lemma}

\begin{proof} 
Clearly, one just needs to verify that the map $\varphi$ vanishes on the image of the contact differential, $\delta$, to prove that $\varphi$ is an augmentation. 

Using linearity and degree considerations, it is enough to verify $\varphi $ vanishes on the image, $\delta(w)$, of monomial words $w$ of degree 1. It will be enough to consider words $w$ that have a degree 0 generator. Indeed, if the word $w$ does not have any degree 1 generator, then $\delta(w)$ will be a linear combination of words that do not have a degree zero generator. From the algebra homomorphism property of $\varphi$ we see that $\varphi(\delta(w))$ vanishes for such words.

We deal with the case of words $w$ which contain degree 1 generators. Such words have an expression $\mathfrak{l}\aa$ where $\aa$ is a degree one Reeb chord generator. By using the Leibniz property, we have
\[\delta(\mathfrak{l}\aa) = \delta(\mathfrak{l})\aa \pm \mathfrak{l}\delta(\aa).\]
%
%
Since $\varphi$ vanishes on the Reeb chord generators, the first term in the above sum vanishes. The second term vanishes from the hypothesis of the Lemma. Thus $\varphi$ is a chain map.
\end{proof}

 \begin{figure}[ht]
     \centering
     {\tiny 
     \scalebox{1}{
\begingroup%
  \makeatletter%
  \providecommand\color[2][]{%
    \errmessage{(Inkscape) Color is used for the text in Inkscape, but the package 'color.sty' is not loaded}%
    \renewcommand\color[2][]{}%
  }%
  \providecommand\transparent[1]{%
    \errmessage{(Inkscape) Transparency is used (non-zero) for the text in Inkscape, but the package 'transparent.sty' is not loaded}%
    \renewcommand\transparent[1]{}%
  }%
  \providecommand\rotatebox[2]{#2}%
  \newcommand*\fsize{\dimexpr\f@size pt\relax}%
  \newcommand*\lineheight[1]{\fontsize{\fsize}{#1\fsize}\selectfont}%
  \ifx\svgwidth\undefined%
    \setlength{\unitlength}{118.8874054bp}%
    \ifx\svgscale\undefined%
      \relax%
    \else%
      \setlength{\unitlength}{\unitlength * \real{\svgscale}}%
    \fi%
  \else%
    \setlength{\unitlength}{\svgwidth}%
  \fi%
  \global\let\svgwidth\undefined%
  \global\let\svgscale\undefined%
  \makeatother%
  \begin{picture}(1,0.70579393)%
    \lineheight{1}%
    \setlength\tabcolsep{0pt}%
    \put(0,0){\includegraphics[width=\unitlength,page=1]{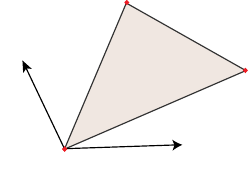}}%
    \put(-0.00381959,0.18109061){\makebox(0,0)[lt]{\lineheight{1.25}\smash{\begin{tabular}[t]{l}$e_1$\end{tabular}}}}%
    \put(0.45111455,0.00889596){\makebox(0,0)[lt]{\lineheight{1.25}\smash{\begin{tabular}[t]{l}$e_2$\end{tabular}}}}%
    \put(0.2183328,0.03653218){\makebox(0,0)[lt]{\lineheight{1.25}\smash{\begin{tabular}[t]{l}$0$\end{tabular}}}}%
  \end{picture}%
\endgroup%
}}
     \caption{Basis to ensure non-negative exponents}
     \label{fig:basisforlam}
 \end{figure}

We now prove the relation to the zero level set of the disk potential.
Let $W_\Pi: \Rep(\Pi) \to \C$ be the disk potential of $\Pi$.
As in \cite[\eqref{II-Wlam}]{BCSW2}, 
$W_\Pi$ is, up to a shift by a monomial, 
the lift of a function $W_\Lambda: \Rep(\Lambda) \to \C$.

\begin{theorem} \label{algpotthm} 
Let $(Z,\Lambda)$ be a tame pair
with $\Lambda$ connected.  The  augmentation variety 
of $\Lambda$ resp. lch spectrum satisfies 
\[ \Aug(\Lambda) = \Aug_\white(\Lambda) =  W_\Lambda^{-1}( 0)  . \] 
\end{theorem}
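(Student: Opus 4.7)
The plan is to prove both equalities by identifying the differential of one distinguished generator with the pulled-back disk potential, and then producing explicit augmentations realizing each point of the zero set.

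\textbf{Step 1: Identify $W_\Lambda$ inside $\delta$.} From the neck-stretching and moduli-space arguments developed in Part~II, there is a distinguished minimal-action Reeb chord $\aa \in \cI_\black(\Lambda)$ (the analogue, in the general monotone setup, of the generator $\aa$ appearing in Example~\ref{algaug} for the Clifford case) whose differential counts, under the projection $p : Z \to Y$, precisely the Maslov-two holomorphic disks of $\Pi$ in $Y$. Under the embedding $y_i = [\mu_i]\exp(\cc_i)$ into $CE^{\ab}(\Lambda,\hat{G}(\Lambda))$, this count gives
\[ \pi\bigl(\delta^{\ab}(\aa)\bigr) = W_\Lambda(y) \in \C[\Rep(\Lambda)], \]
so that $W_\Lambda$ lies in the augmentation ideal $I(\Lambda)$.

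\textbf{Step 2: One inclusion.} Any augmentation $\varphi : CE(\Lambda) \to G(\varphi)$ satisfies $\varphi(\delta(\aa)) = 0$, so the corresponding point of $\Rep(\Lambda)$ lies in $W_\Lambda^{-1}(0)$. This gives $\Aug(\Lambda) \subseteq W_\Lambda^{-1}(0)$. Since $\aa$ is already a Reeb chord generator, the identical reasoning after restricting the differential to $CE_\white(\Lambda)$ yields $\Aug_\white(\Lambda) \subseteq W_\Lambda^{-1}(0)$.

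\textbf{Step 3: Construct an augmentation at each zero.} Given $y \in W_\Lambda^{-1}(0)$, define $\varphi_y$ on single-letter generators by choosing (in a suitable local or completed coefficient ring, as in Example~\ref{algaug}) values $\varphi_y(\cc_i)$ with $[\mu_i]\exp(\varphi_y(\cc_i)) = y_i$, and by setting $\varphi_y$ to vanish on every other single-letter generator. By Lemma~\ref{singtoall}, it suffices to check $\varphi_y(\delta(g)) = 0$ on each single-letter $g$. The case $g = \aa$ is exactly the hypothesis $W_\Lambda(y) = 0$ by Step~1; the case $g = \bb$ is the Morse identity $\delta^{\ab,0}(\bb) = 0$; and every remaining single-letter generator has $\delta(g)$ supported on words that are either of positive total degree, or involve at least one generator outside the chosen basis $\{\cc_i\}$, so they vanish under $\varphi_y$. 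The constraints of minimal Chern number at least two and minimal Maslov number two are what force the relevant moduli spaces that could produce further obstructions to be empty, by a virtual-dimension and action argument.

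\textbf{Step 4: Equality of $\Aug(\Lambda)$ and $\Aug_\white(\Lambda)$.} The inclusion $\Aug(\Lambda) \subseteq \Aug_\white(\Lambda)$ is formal: restricting any augmentation to $CE_\white(\Lambda)$ yields a ring map from $HE_{\white,0}^{\ab}(\Lambda)$ to $\C$. For the reverse inclusion, the augmentation $\varphi_y$ constructed in Step~3 is determined by its values on the $\cc_i$, which are encoded by the same point $y$ seen in $\Aug_\white(\Lambda)$; the extension from the white subcomplex to the full $CE(\Lambda)$ is forced to vanish on all remaining generators and is thus tautological. The main obstacle of the proof is Step~3: establishing that no single-letter generator beyond $\aa$ contributes an extra constraint, which requires translating the monotonicity and minimal-number hypotheses into a precise accounting of which punctured-disk moduli spaces in $Z$ are non-empty.
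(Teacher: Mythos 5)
Your Steps 1 and 2 coincide with the paper's argument: $\pi(\delta^{\ab}(\aa))=W_\Lambda$ via \eqref{augconstraint}, hence $W_\Lambda\in I(\Lambda)$ and $\Aug(\Lambda)\subseteq W_\Lambda^{-1}(0)$. For the reverse inclusion your route genuinely differs from the paper's. You propose one $\C$-valued augmentation $\varphi_y$ per point $y\in W_\Lambda^{-1}(0)$; since $I(\Lambda)$ lies in every $\ker\varphi_y$, every $f\in I(\Lambda)$ would vanish at every such $y$, which does give the set-theoretic containment. The paper instead constructs, for each irreducible component, a single augmentation valued in $\C[[\mu_1,\ldots,\mu_{k-1}]]$: it normalizes coordinates so that a generic root $\kappa$ of $W_\Lambda(0,\ldots,0,y_k)$ is transversally cut out (a Sard argument), then solves $W_\Lambda([\mu_1],\ldots,[\mu_{k-1}],\kappa\exp(\varphi(\cc_k)))=0$ order by order by a formal implicit function theorem. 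That extra machinery buys more than the variety statement: the power-series-valued augmentation detects a whole hypersurface component at once (its kernel is a prime ideal whose locus is forced to be $(k-1)$-dimensional), and it is the version that upgrades to the scheme-level equality $I=\langle W_\Lambda\rangle$ of Remark \ref{rm:augscheme} and matches the geometric augmentations coming from fillings. Your pointwise version, if completed, would suffice for the variety statement but not for those refinements.

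The genuine gap is in your Step 3, and it is not where you locate it. The degree-one generators other than $\aa$ are the Morse-degree-two classes $\bb\in\cI_\black(\Lambda)$, and their differentials $\delta(\bb)$ are \emph{not} zero and the relevant moduli spaces are \emph{not} empty: $\delta(\bb)$ contains the Morse differential of $\bb$ plus genuine holomorphic-disk contributions $\sum\delta_d(\bb,\cc_{j_1},\ldots,\cc_{j_k})\cc_{j_1}\cdots\cc_{j_k}$. The reason these impose no constraint on $\varphi_y$ is algebraic, not geometric: by Proposition II-\ref{II-abelian} (used in the paper's proof of well-definedness of the embedding $\C[\Rep(\Lambda)]\hookrightarrow CE^{\ab}(\Lambda)$) these words are graded skew-symmetric in their degree-zero letters, hence die under any algebra map to an abelian ring, while the words containing a negative-degree letter die because a graded augmentation kills such letters. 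Your proposed mechanism ("virtual-dimension and action argument forcing the moduli spaces to be empty") would not close this gap. Separately, your Step 4 misstates the lch-spectrum comparison: $\Aug_\white(\Lambda)$ is $\Spec$ of $HE^{\ab}_{\white,0}$, and the paper's argument is simply that $\aa$ is the only Reeb generator of degree one, so for degree reasons $\delta_\white(\aa)=W_\Lambda$ generates the defining ideal; no extension-of-augmentations argument is needed or appropriate there.
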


\begin{proof}   One direction of containment  for the augmentation variety
follows from the partial computation of the differential 
in Example II-\ref{II-toricex3}:  If $\aa \in \cI_\white(\Lambda)$ 
is the Morse-degree-zero generator of minimal Reeb length then
\begin{equation} \label{augconstraint}
\varphi( \delta^{\ab}(\aa)) =   W_\Lambda(\varphi(^{\ab}[\mu_1])e^{\varphi(\cc_1)},\dots, \varphi([\mu_{k}])e^{\varphi(\cc_{k})}) .\end{equation}
Thus 
\[ \Aug(\Lambda) \subseteq W_\Lambda^{-1}( 0)  . \] 

To show the reverse inclusion, we explicitly construct an 
augmentation valued in a certain formal power series ring
which takes the desired values given by any point in the augmentation variety.  We may without loss of generality assume that 
a basis for $H_1(\Lambda)$ has been chosen so that the exponents in \eqref{augconstraint} are non-negative.

We find an augmentation by a formal expansion around a transversally-cut-out zero of the disk potential.  Let 
\[ W_\Lambda(y_{k}) = W_\Lambda(0,\ldots,0,y_{k}) \]
denote the polynomial obtained by setting the first $k-1$ coordinates to zero.
Then $ W_\Lambda(y_{k}) $ is a polynomial in $y_{k}$, and as such has 
at least one solution over the complex numbers, call it $\kappa \in \C$, so that 
\[ W_\Lambda(\kappa) = 0 . \]
As a simple case of Sard's theorem, for a generic
linear transformation in $GL(k,\C)$ in the coordinates $y_1,\ldots,y_{k}$, each non-zero solution $\kappa$ is transversally cut out.  
Equivalently, that is, the roots of the polynomial 
$W_\Lambda(\kappa)$ 
have multiplicity one.  Indeed, this condition is equivalent to the condition that a generic line intersects $W^{-1}_\Lambda(0)$ transversally.
The projection 
\[ \pi: W_\Lambda^{-1}(0) - \{ 0 \} \to \CP^{k-1} \]
onto complex projective space 
$\CP^{k-1}$ has finite fiber over any line $\ell \in \CP^{k-1}$.  Indeed, if not the fiber $\pi^{-1}(\ell)$ would be a line and so contain $0$, which violates the condition that the constant term in 
$W$ is non-vanishing. So the image  $\pi(W_\Lambda^{-1}(0) - \{ 0 \}) $ is a quasiprojective 
variety of dimension $k-1$ and so dense.   The fiber over 
a generic  line in $\CP^{k-1}$ has the desired property, by Sard's theorem. 
Let 
\[ G(\varphi) = \C[[[\mu_1],\ldots,[\mu_{k-1}]]] .\] 
Define a map $\varphi$ from $CE_0(\Lambda)$ 
to $G(\varphi)$ by setting 
\[ \varphi([\mu_{n-1}]) = \kappa \]
\[ \varphi(\cc_1) = \ldots = \varphi(\cc_{k-1}) = 0 \]
and 
\[ \varphi(\cc_{k})  \in G(\varphi) \] 
so that 
\begin{equation} \label{sothat}
W_\Lambda([\mu_1],\ldots,[\mu_{k-1}],\kappa \exp(\varphi(\cc_{k}))) = 0 .\end{equation}
The existence of such a formal solution  follows from a formal version of the implicit function theorem, that is, an order-by-order analysis:  The leading order term vanishes by assumption.
Consider the linearization 
\[ D_{y_k} W_\Lambda(\kappa): \C[\mu_1,\ldots,\mu_{k-1}]
\to \C[\mu_1,\ldots,\mu_{k-1}]  \] 
given by multiplication by the number $D_{y_k} W_\Lambda(\kappa) \in \C^\times$ of \eqref{sothat}.  
This map is an isomorphism, as the solution 
$(0,\ldots, 0,\kappa)$ is transversally cut out.
\footnote{Here we use the fact that $CE(\Lambda)$ is defined over $G(\Lambda)$
rather than  the completion $\hat{G}(\Lambda)$; if we used $\hat{G}(\Lambda)$
then we would have to justify that $\varphi$ is well-defined on the completion 
which is unclear to us.   }
Given a solution $\ss_{d}$ of \eqref{sothat} to order $d$ define 
\[ \ss_{d+1} =  \ss_{d} + ( D_\kappa W_\Lambda)^{-1}( 
W_\Lambda([\mu_1],\ldots,[\mu_{k-1}],\kappa \exp(\ss_{d})) \]
Then $\ss_{d+1}$ is a solution to order $d+1$ and agrees
with $\ss_{d}$ to order $d$. Taking the limit gives the desired solution
$\ss = \varphi(\cc_k)$. 
By construction, $\varphi$ is non-vanishing on the ring generated by the
first $k-1$ coordinates $y_1,\ldots, y_{k-1}$, so the subvariety
$\Aug_\varphi(\Lambda)$ defined by the ideal $\varphi^{-1}(0)$ is a hypersurface
containing $(0,\ldots, 0, \kappa)$.   By construction $\varphi$ vanishes on the differentials of the minimal length Reeb chords.  Since $\delta$ on the abelianization agrees with the Morse differential, by Lemma II-\ref{II-lem:noconst}, it also vanishes on images of the differential
applied to the classical generators.   By Lemma \ref{lem:vanish}, $\varphi$ is an augmentation.   Thus the irreducible component of  $W_\Lambda^{-1}( 0)  $ containing  $(0,\ldots, 0, \kappa)$ is contained in $\Aug(\Lambda)$. 

Repeating this procedure for each irreducible component of $\Rep(p) (W^{-1}_\Pi( 0) )$ proves that all irreducible components are so contained.  More precisely, 
suppose that 
\[ W_\Lambda = W_{\Lambda,1}^{d_1} W_{\Lambda,2}^{d_2} \ldots W_{\Lambda,l}^{d_l} \]
is the decomposition of $W_\Lambda$ into irreducible factors with multiplicities
$d_1,\ldots, d_l$.  For each $i = 1,\ldots, l$ choose coordinates and $\kappa \in \C$ so that $(0,\ldots, 0,\kappa)$ is a transversally cut out solution 
to $W_{\Lambda,i} = 0$ not contained in any other $W_{\Lambda,j}^{-1}(0)$.    The construction of the previous paragraph gives an augmentation such that $\Aug_{\varphi_i}(\Lambda) \subset \Aug(\Lambda)$ is a hypersurface containing
$W_{\Lambda,i}^{-1}(0)$.  Thus $\Aug(\Lambda)$ contains each of the irreducible
components of $\Rep(p)(W^{-1}_\Pi(0))$, and this proves the equality claimed in the Theorem.

It remains to show that lch spectrum is also given by the zero level set of the
disk potential.  By degree considerations,
the elements $\delta_\white(\aa)$ generate
the image of $\delta_\white$ because $\aa$ is the only Reeb generator of degree one.  It follows that $\Aug_\white(\Lambda)$ is the variety defined by 
 $\delta_\white(\aa)$.  
\end{proof}

\begin{remark}
    
    We can modify the Definition \ref{augvar} by considering the augmentations to $G(\varphi)$ where $G(\varphi)$ is an integral domain. We call this ideal the \textit{extended domain-augmentation ideal} and denote it with $\widetilde I_D$. We can similarly define the domain-augmentation ideal, $I_D$, by projecting to the image of $\C[\Rep(\Lambda)]$. The proof of Theorem \ref{algpotthm} shows that under the same hypothesis as the Theorem, 
    \[I_D = \sqrt{\langle W_\Lambda  \rangle} .\]
\end{remark}

    One can also show that under the same hypothesis, Theorem \ref{algpotthm} can be extended to show equality of the augmentation schemes instead of varieties.

\begin{theorem} \label{augidealthm} 
Let $(Z,\Lambda)$ be a tame pair
with $\Lambda$ connected.  The  augmentation ideal 
of $\Lambda$ satisfies 
\[ I  = \langle W_\Lambda \rangle .\]
\end{theorem}
    \begin{proof}
        
    We give a quick sketch of the proof. Assume that  the following  factorization holds as before:
    \[W_\Lambda = W_{\Lambda,1}^{d_1} W_{\Lambda,2}^{d_2} \ldots W_{\Lambda,l}^{d_l}.\] 
    We proceed as in Theorem \ref{algpotthm}, but instead of constructing an augmentation $\varphi$ which vanishes on one of the irreducible factor $W_{\Lambda,i}$ of $W_\Lambda$, we construct an augmentation $\varphi$ such that $\varphi(W_{\Lambda,i})$ is a nilpotent element of order $d_i$. Let 
\[ G(\varphi) = R[[\mu_1],\dots,[\mu_{k-1}] ], \;\text{ where} \quad R = \C [\alpha]/\langle{\alpha^{d_i}}\rangle. \] 
After a change in basis as in the proof of Theorem \ref{algpotthm},
we may assume that $W_{\Lambda,i}(0,..,y_k)$ is a polynomial with only transverse roots and let $1$ be a root. Thus $W_{\Lambda,i}'(0,..,1) \neq 0.$ Then, by viewing $W_{\Lambda,i}$ as a polynomial with $R$ coefficients, we have 
    \[W_{\Lambda,i}(0,..,1+\alpha) = c\alpha + \text{higher order terms in }\alpha, \;\; c \in \C^{\star} .\]
    Note that $W_{\Lambda,i}(0,..,1+\alpha)$ is a nilpotent element of order $d_i$. By performing an implicit function theorem inspired induction as before, we can create an augmentation 
    \[\varphi: CE(\Lambda) \to R[[[\mu_1],\dots, [\mu_{k-1}]]]\] 
    such that 
    \begin{align*}
    \varphi(\cc_{i})&=0 \; \forall i<k \\
    \varphi([\mu_i]) &= [\mu_i] \; \forall i<k \\
    \varphi ([\mu_k]) &= 1 \\
    \end{align*}
    and $\varphi(\cc_k)$ satisfies the equation 
    \[ W_{\Lambda,i}([\mu_1],\dots, [\mu_{k-1}],\exp(\varphi(\cc_k)) = W_{\Lambda,i}(0,..,1+\alpha). \]
    Thus we have that the ideal, $I_{\varphi}$, corresponding to the augmentation $\varphi$ contains $W_{\Lambda,i}^{d_i}$ but not any lower order exponents. Thus, by repeating this argument for each irreducible factor, we can conclude that    
    \[I = \langle W_\Lambda \rangle.\]

    \end{proof}

\begin{example}  We construct an augmentation for the Clifford Legendrian
with the trivial (unfillable) spin structure.   For the minimal length Reeb chord $\aa$ we have 
\[ \delta^{\ab}(\aa) =
1 + [\mu_1] e^{\cc_1} + [\mu_2] e^{\cc_2} . \]
We choose as solution the element 
\begin{equation} \label{given}
    \varphi([\mu_1]) = 0, \quad \varphi([\mu_2]) = \kappa = -1 . \end{equation}
A formal solution is then given by 
\[  \varphi(\cc_2) = \ln(1 + [\mu_1])  . \] 
On the other hand, we could expand around the solution
\eqref{given}
\[ \varphi(\cc_1) = \ln(1 + [\mu_2]) . \]
If the spin structure was fillable, we obtain similar augmentations
with different sign choices; these different augmentations correspond to the different choices  of smoothing of the Harvey-Lawson cone as we will 
explain in the next section. 
\end{example}

We wish to show that the augmentation variety is an invariant
of Legendrian isotopy.  For this, we will show that the cobordism maps induce maps on augmentation varieties.  We first achieve a partial skew-symmetry for the cobordism maps.   Write 
\[ \varphi(\gamma) = \sum_\Gamma \varphi_{\Gamma}(\gamma) \] 
where 
$\varphi_\Gamma(\gamma)$ is the contribution from maps with domain type $\Gamma$.

\begin{definition}  The map $\varphi$ will be called {\em classically commutative} if  the map $\varphi = \varphi(L,b)$ applied to a classical generator $\bb \in \cI_\black(\Lambda)$
 has the following skew-symmetry property:  Suppose that $e_i,e_{i+1} \in \Edge_{\rightarrow}(\Gamma)$ are leaves of a tree $\Gamma$ incident on the same vertex
$v \in \Ver(\Gamma)$.  Then the transposition $\sigma_{i(i+1)}$ satisfies
\[ \sigma_{i(i+1)} \varphi_\Gamma(\bb)  = (-1)^{ \deg_{\Z_2}(\gamma_i) \deg_{\Z_2} (\gamma_{i+1})} 
\varphi_\Gamma(\bb) .\]
That is, the output of $\varphi$ is graded-commutative with respect to the transposition of the edges $e_i,e_{i+1}$. In particular, after abelianization the output of $\varphi$ on classical generators consists of length one words,  and $\varphi$ is the classical Morse continuation map.
\end{definition}

\begin{lemma}  \label{abelian3} For invariant perturbations in the sense of Remark II-\ref{II-inv}, the cobordism map $\varphi(L,b)$
is classically commutative.  
\end{lemma}

The proof is similar to that of Proposition II-\ref{II-abelian}.
For any map of dga's
\[ \varphi: CE(\Lambda_+,G(\varphi)) \to CE(\Lambda_-, G(\varphi)) \]
denote the abelianization 
\[ \varphi^{\ab} :CE^{\ab}(\Lambda_+,G(\varphi)) \to CE^{\ab}(\Lambda_-, G(\varphi)) .\] 
%
\begin{lemma} \label{subring} Let $X = \R \times Z$ be a symplectization and 
$L$ is the cobordism constructed from an isotopy of Legendrians 
in Example I-\ref{I-isotopy}. 
The abelianized cobordism map 
\[ \varphi{(L,b)}^{\ab}: CE^{\ab}(\Lambda_+,G(\Lambda_+)) \to CE^{\ab}(\Lambda_-, G(\Lambda_-)) \] 
preserves the sub-rings
$\C[\Rep(\Lambda_+)] \cong \C[\Rep(\Lambda_o)]$ up to elements of the 
augmentation ideal $\ti{I}^{\ab}(\Lambda_-)$, that is, 
\[ \varphi(L,b)^{\ab} ( \C[\Rep(\Lambda_+)] ) \subset \C[\Rep(\Lambda_-)] + \ti{I}^{\ab}(\Lambda_-).\]
\end{lemma}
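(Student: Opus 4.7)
The plan is to reduce the statement to the generators of $\C[\Rep(\Lambda_-)]$ and then invoke the abelian/graded cancellation machinery developed in the proof of the independence-of-basis proposition above, applied this time to the cobordism $L$ instead of to the symplectization.

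\textbf{Step 1 (Reduction to generators).} Because $\varphi(L,b)^{\ab}$ is an algebra homomorphism and $\widetilde{I}^{\ab}(\Lambda_+)$ is an ideal, it suffices to show that each generator
\[ y_i^- = [\mu_i^-]\exp(\cc_i^-) \in \C[\Rep(\Lambda_-)] \]
maps into $\C[\Rep(\Lambda_+)]+\widetilde{I}^{\ab}(\Lambda_+)$. Since $L$ is the trace of an isotopy, it is diffeomorphic to $\R\times\Lambda$, and the cycle labels $[\mu_i^-]$ match $[\mu_i^+]$ under the induced isomorphism on $H_1$, so $\varphi(L,b)^{\ab}([\mu_i^-])=[\mu_i^+]$. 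Working in $CE^{\ab}$, where generators commute, the algebra-map property yields
\[ \varphi(L,b)^{\ab}(\exp(\cc_i^-)) = \exp\bigl(\varphi(L,b)^{\ab}(\cc_i^-)\bigr), \]
so the content of the lemma is to compute $\varphi(L,b)^{\ab}(\cc_i^-)$ modulo $\widetilde{I}^{\ab}(\Lambda_+)$.

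\textbf{Step 2 (Comparison of cycles via the chain-map identity).} Choose, inside the cobordism $L$, Morse-degree-two chains $\bb_i\in CE_{\black}(L)$ whose Morse differential satisfies $\delta_{\on{Morse}}(\bb_i)=\cc_i^--\cc_i^+$; such chains exist because the cobordism induces an isomorphism on $H_1$ and the two cycles represent identified classes. The dga-map relation $\delta^+\circ\varphi=\varphi\circ\delta^-$, combined with the analogue of the expansion used in the proof of the independence proposition, gives
\[ \varphi(L,b)^{\ab}(\cc_i^-) = \cc_i^+ + \delta^+(\eta_i) + r_i, \]
where $r_i$ is a sum of two kinds of words: (a) words consisting entirely of degree-zero generators, whose structure coefficients are made skew-symmetric by Lemma \ref{abelian3} (the cobordism-version of Lemma II-\ref{II-abelian2}) and hence vanish after abelianization, and (b) words containing at least one generator of negative degree, which vanish under every graded augmentation of $\Lambda_+$. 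Since $\delta^+$-exact terms obviously lie in $\widetilde{I}^{\ab}(\Lambda_+)$, we conclude $\varphi(L,b)^{\ab}(\cc_i^-)-\cc_i^+\in\widetilde{I}^{\ab}(\Lambda_+)$.

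\textbf{Step 3 (Assembling the exponential).} Writing $\varepsilon_i:=\varphi(L,b)^{\ab}(\cc_i^-)-\cc_i^+\in\widetilde{I}^{\ab}(\Lambda_+)$, we obtain
\[ \varphi(L,b)^{\ab}(y_i^-) = [\mu_i^+]\exp(\cc_i^+)\exp(\varepsilon_i) = y_i^+\bigl(1+\varepsilon_i+\tfrac{1}{2}\varepsilon_i^2+\cdots\bigr). \]
Because $\widetilde{I}^{\ab}(\Lambda_+)$ is an ideal, every term of $\exp(\varepsilon_i)-1$ lies in it, and hence
\[ \varphi(L,b)^{\ab}(y_i^-) \in y_i^+ + \widetilde{I}^{\ab}(\Lambda_+)\subset \C[\Rep(\Lambda_+)]+\widetilde{I}^{\ab}(\Lambda_+), \]
as required.

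\textbf{Main obstacle.} The only subtle point is Step 2: establishing that the correction terms $r_i$ really do split cleanly into the two types (skew-symmetric degree-zero words and negative-degree words) and hence lie in $\widetilde{I}^{\ab}(\Lambda_+)$. This requires the cobordism analogue of the abelian skew-symmetry lemma (Lemma \ref{abelian3} above), together with the invariant-perturbation hypothesis; the rest is routine bookkeeping of degrees and use of the chain-map property.
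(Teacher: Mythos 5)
Your overall strategy matches the paper's: identify the coefficient rings via $L \cong \R \times \Lambda$, match the group-ring labels $[\mu_i^-] \mapsto [\mu_i^+]$, show $\varphi(L,b)^{\ab}(\cc_i^-) \equiv \cc_i^+$ modulo terms killed by every augmentation, and then push the conclusion through the monomials $y^\lambda$ using the ideal property of $\ti{I}^{\ab}(\Lambda_+)$. Steps 1 and 3 are fine.

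The gap is in Step 2, and it is exactly the point where the paper does its real work. You assert that the correction terms $r_i$ consist only of (a) skew-symmetric degree-zero words and (b) words containing a negative-degree generator, citing the cobordism skew-symmetry lemma. But that dichotomy is only available once one knows that the configurations computing $\varphi(L,b)$ on the classical generators $\cc_i^-$ contain \emph{no non-constant holomorphic disk components} — i.e.\ that on the classical part the cobordism map is the classical Morse continuation map. The paper establishes this directly: the relevant treed disks have no incoming strip-like ends (punctures), so positivity of area forces every disk component to be constant, and the count reduces to parametrized Morse trajectories whose intersection numbers with the stable-manifold cycles are topological. Without this input, a non-constant disk in the cobordism could contribute a term of the form $[\mu]\,\cc_{j_1}^+\cdots\cc_{j_k}^+$ with a nontrivial group-ring coefficient $[\mu] \neq 1$; such a term is neither skew-symmetric nor of negative degree, it need not vanish under every augmentation, and after exponentiation it would take $y_i^-$ outside $\C[\Rep(\Lambda_+)] + \ti{I}^{\ab}(\Lambda_+)$. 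Your "main obstacle" paragraph attributes the issue entirely to the skew-symmetry lemma, but that lemma only organizes whichever terms do appear; the zero-area argument is what controls \emph{which} terms appear, and it must be stated.

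A secondary, formal issue: your interpolating chains $\bb_i \in CE_\black(L)$ with $\delta_{\on{Morse}}(\bb_i) = \cc_i^- - \cc_i^+$ are not well-posed as written, since $\cc_i^-$ and $\cc_i^+$ live in the two different dga's $CE(\Lambda_\mp)$ and the cobordism map is not a differential on a single complex containing both. The paper avoids this by arguing at the level of the moduli spaces defining $\varphi(L,b)$ (continuation trajectories and topological intersection numbers with the cycles $\ol{\Sigma}{}^s_i$) rather than by producing an explicit primitive. The geometric idea behind your $\bb_i$ is sound, but it would need to be recast as a statement about the ends of one-dimensional moduli spaces of continuation trajectories, which again presupposes the no-disk argument above.
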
 

\begin{proof}  First note that we have an identification of 
coefficient rings.  Indeed since $L$ is diffeomorphic to $ \R \times \Lambda$ we have  natural
identifications
\[ \hat{G}(L) \cong G(\Lambda_-) 
\cong G(\Lambda_+) . \] 
Next we check that the coordinate rings on the representative
variety are preserved.  Let $u: S \to X$ be a disk bounding $L$ 
with no punctures.  The number of intersections 
of the boundary $\partial u$ with the geometric cycle given by the union of stable manifolds  $\Sigma^s_i$ corresponding to the Morse cycle $\cc_i$ is the intersection number
of $\partial u$ and the closure of $\Sigma^s_i$.
The intersection number is topological and independent of the choice of representative
of $ [\cc_i]$.   The augmentation $\varphi(L,b)$ is defined on 
the generators $ \cI_\black(\Lambda)$ by counts of parametrized trajectories without disk components. Indeed, since the disks have no incoming strip-like ends, 
the positivity conditions imply that there are no non-constant disks, 
so that the treed disks in $X$ bounding $L$ have zero area.  

By Lemma \ref{abelian3}, any collection of cycles $\cc_{i,+}, i = 1,\ldots, k$
maps under $\varphi{(L,b)}$ to a collection of cycles $\cc_{i,-}, i = 1,\ldots, k$, up to terms that vanish under the projection to the abelianization and so vanish under every augmentation.
It follows that the subspace $\C[\Rep(\Lambda)_+]$ in $CE(\Lambda_+, \hat{G}(\Lambda))$ generated by polynomials $y^\lambda$
is mapped  to $\C[\Rep(\Lambda_-)]  $ in $CE(\Lambda_-, \hat{G}(\Lambda))$, up to terms in $\tilde{I}^{\ab}(\Lambda_-)$, as claimed.
\end{proof}

\begin{theorem}  \label{legisotopy}
Let $X = \R \times Z$ is a symplectization and 
$L_{01}$ is the cobordism constructed from an isotopy of Legendrians 
$\Lambda_0$ to $\Lambda_1$ in Example I-\ref{I-isotopy}. 
The cobordism map 
\[ \varphi{(L_{01},b_{01})}: CE(\Lambda_0,G(\Lambda_0)) \to CE(\Lambda_1, G(\Lambda_1)) \] 
induces  isomorphisms of augmentation varieties and lch spectra
\[ \Aug(L_{01}):  \  \Aug(\Lambda_0) \to \Aug(\Lambda_1), \quad 
\Aug_\white(L_{01}): \  \Aug_\white(\Lambda_0) \to \Aug_\white(\Lambda_1) .
\]
\end{theorem}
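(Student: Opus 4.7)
The plan is to show that the cobordism map $\varphi(L,b)$, after abelianization and reduction modulo the augmentation ideals, descends to the identity map on the coordinate rings of the representation varieties; symmetrizing with the reverse isotopy then gives $I(\Lambda_-) = I(\Lambda_+)$ under the canonical identifications
\[ \hat{G}(\Lambda_-) \cong \hat{G}(L) \cong \hat{G}(\Lambda_+), \]
so that $\Aug(\Lambda_-)$ and $\Aug(\Lambda_+)$ coincide as subvarieties of a common representation variety, and similarly for the lch spectra.

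The first step is the pullback principle for augmentations: given any augmentation $\psi_+ : CE(\Lambda_+, G) \to G$, the composition $\psi_+ \circ \varphi(L,b)$ is a dga homomorphism into a complex concentrated in degree zero, and hence an augmentation of $CE(\Lambda_-,G)$. Consequently
\[ \varphi(L,b)\bigl(\widetilde{I}(\Lambda_-)\bigr) \subseteq \widetilde{I}(\Lambda_+). \]
Combining this with Lemma \ref{subring}, which sends $\C[\Rep(\Lambda_-)]$ into $\C[\Rep(\Lambda_+)] + \widetilde{I}^{\ab}(\Lambda_+)$ by taking each generating monomial $y^\lambda$ on the $\Lambda_-$-side to the corresponding monomial on the $\Lambda_+$-side modulo augmentation-ideal terms, yields a well-defined ring homomorphism
\[ \C[\Rep(\Lambda_-)]/I(\Lambda_-) \longrightarrow \C[\Rep(\Lambda_+)]/I(\Lambda_+) \]
that, after the natural identification of source and target, is the identity.

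The second step applies the same argument to the reverse cobordism $L^{-1}$, which gives a ring homomorphism in the opposite direction that is also the identity modulo augmentation ideal. Composing on either side gives the identity on the quotient rings, so both are isomorphisms; taking $\Spec$ provides the claimed isomorphism $\Aug(L) : \Aug(\Lambda_-) \to \Aug(\Lambda_+)$.

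For the lch spectrum the argument is parallel. Since $CE_\white(\Lambda)$ is generated by Reeb chords, the cobordism map sends a Reeb word either to a Reeb word or to words containing Morse generators, and the latter vanish under the combined operation of abelianization and restriction to degree zero. Passing to $HE^{\ab}_{\white,0}$ and taking $\Spec$ yields the isomorphism $\Aug_\white(L) : \Aug_\white(\Lambda_-) \to \Aug_\white(\Lambda_+)$. The hard part in either case will be verifying that the composition of cobordism maps coming from $L$ and $L^{-1}$ is chain-homotopic to the identity on the abelianized complexes, which one handles by the standard parametric moduli space and gluing argument establishing invariance of the Chekanov--Eliashberg algebra under choice of almost complex structure and perturbation data.
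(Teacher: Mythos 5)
Your proposal is correct and follows essentially the same route as the paper: precomposition of augmentations of $CE(\Lambda_+)$ with the chain map $\varphi(L,b)$ to get $\varphi(L,b)(\widetilde{I}(\Lambda_-)) \subseteq \widetilde{I}(\Lambda_+)$, Lemma \ref{subring} to see that $\C[\Rep(\Lambda_-)]$ is carried to $\C[\Rep(\Lambda_+)]$ modulo the augmentation ideal, and the reverse isotopy together with the chain homotopy of the composed cobordism maps to the identity to conclude that the induced maps on augmentation varieties (and lch spectra) are mutually inverse isomorphisms.
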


\begin{proof}  We must show that the cobordism map induces a map of augmentation ideals. 
Since the cobordism maps $\varphi{(L_{01},b_{01})}$ (in this case $b_{01} = 0$) are chain maps,  if $\varphi'$ is an augmentation of $CE(\Lambda_1,G(\Lambda_1))$
then the composition $\varphi' \circ \varphi{(L_{01},b_{01})}$ 
is an augmentation of $CE(\Lambda_0,G(\Lambda_0)$.
By Lemma \ref{subring}, the subring $\C[\Rep(\Lambda_0)]$ is preserved by the chain map $\varphi(L_{01},b_{01})$. 
Moreover, if $a\in I(\Lambda_0)$ and $\varphi'$ is any augmentation of $CE(\Lambda_1,G(\Lambda_1)),$ we have $\varphi' (\varphi{(L_{01},b_{01})}(a)) = \varphi' \circ \varphi{(L_{01},b_{01})}(a)= 0 $.  Thus the augmentation ideal $I(\Lambda_0)$ is also preserved by $\varphi(L_{01},b_{01})$:
\[ \varphi(L_{01},b_{01}) ( I(\Lambda_0)) \subseteq I(\Lambda_1) . \] 
So  the chain map $\varphi(L_{01},b_{01})$ induces a map of augmentation varieties from $\Aug(\Lambda_0)$ to $\Aug(\Lambda_1)$.  
The reverse isotopy induces a map $\varphi(L_{10},b_{10})$ with 
\[ \varphi(L_{10},b_{10}) I(\Lambda_1) \subseteq I(\Lambda_0) . \] 
On the other hand, the composition 
\[ \vartheta := \varphi(L_{10},b_{10}) \circ  \varphi(L_{01},b_{01}) \]
on $CE^{\ab}(\Lambda_0) $ is chain homotopic to the identity via some chain homotopy $h$.
Hence for $a \in I(\Lambda_0)$, utilizing the fact that $\C[\Rep(\Lambda_0)] \subset \ker \delta,$
\[ \vartheta( a ) = a + h \delta( a) + \delta h (a)  = a + \delta h (a).  \] 

Since the image of $\delta$ lies in $I(\Lambda_0)$, we have $\vartheta(a) = a $
for all $a \in I(\Lambda_0)$.   Together with the similar claim for the reverse 
order of composition, this shows  $\varphi(L_{01},b_{01}) ( I(\Lambda_0)) = I(\Lambda_1)$. 
\end{proof}

Chanda-Hirschi-Wang \cite{chw:tori} extend Vianna's construction of monotone tori to higher dimensional projective spaces. In particular, for every Markov triple $(a,b,c)$, they construct a monotone Lagrangian torus $\ol T_{abc}$ in $\P^n$ and show that their disk potentials have distinct Newton polytope. By taking the Bohr-Sommerfeld lifts of these monotone Lagrangian tori, we obtain embedded Legendrian tori in $S^{2n-1}$. Denote the Bohr-Sommerfeld lift of the $n-1$-dimensional lifted Vianna torus corresponding to the Markov triple $(a,b,c)$ as $\Lambda^{abc}_{n-1}$.

\begin{lemma}\label{irredofpot}
    The augmentation polynomial, $W_{\Lambda^{abc}_{n-1}}$, of the $n$-dimensional Legendrian torus $\Lambda^{abc}_{n-1}$ is an irreducible polynomial.
\end{lemma}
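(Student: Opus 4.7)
The plan is to apply the Newton polytope criterion for irreducibility of Laurent polynomials: if the Newton polytope $\Newt(f)$ of $f$ is Minkowski-indecomposable as a lattice polytope, then $f$ is irreducible up to multiplication by monomial units. I will therefore compute $\Newt(W_{\Lambda^{abc}_{n-1}})$ and verify Minkowski-indecomposability.

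First, by Theorem \ref{algpotthm}, $W_{\Lambda^{abc}_{n-1}}$ equals $p^{*}W_{\ol T_{abc}}$, where the covering $p:\Lambda^{abc}_{n-1}\to \ol T_{abc}$ induces a finite cover $\Rep(p)$ of algebraic tori. Up to multiplication by a global monomial and a monomial change of variables, the pull-back coincides with $W_{\ol T_{abc}}$, and such transformations preserve irreducibility of Laurent polynomials. Hence it suffices to show $W_{\ol T_{abc}}$ is irreducible. By the explicit Chanda-Hirschi-Wang formula, $W_{\ol T_{abc}}$ is a Laurent polynomial with exactly $n+1$ nonzero monomial terms, one per class of Maslov-index-two disks bounding $\ol T_{abc}$, each of coefficient $\pm 1$; its Newton polytope $\Delta_{abc}\subset \R^{n}$ is consequently a lattice $n$-simplex whose vertex data is determined by $(a,b,c)$ and the mutation sequence producing $\ol T_{abc}$ from the Clifford torus $\ol T_{111}$.

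It remains to show that $\Delta_{abc}$ is Minkowski-indecomposable. Since $\Delta_{abc}$ is an $n$-simplex, any Minkowski decomposition $\Delta_{abc}=P+Q$ into lattice polytopes forces $P$ and $Q$ to be lattice simplices of the same combinatorial type as $\Delta_{abc}$, with each vertex of $\Delta_{abc}$ written as a sum of a vertex of $P$ and a vertex of $Q$. A short scaling argument using the primitivity of the edge vectors of $\Delta_{abc}$ — these are the pairwise differences of the $n+1$ exponent vectors of the CHW potential, and are individually primitive with the full collection generating $\Z^{n}$ — forces one of $P,Q$ to collapse to a single lattice point, hence the corresponding factor to be a monomial unit.

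The main obstacle is the uniform verification that these edge vectors are primitive and span $\Z^{n}$ for every Markov triple. The natural strategy is induction on the length of the mutation sequence connecting $\ol T_{abc}$ to the Clifford torus $\ol T_{111}$, where primitivity is immediate from the explicit formula $W_{\ol T_{111}} = y_{1}+\cdots+y_{n} + (y_{1}\cdots y_{n})^{-1}$; each Vianna mutation acts on the Newton polytope by a unimodular transformation in adapted coordinates and hence preserves primitivity of the edge system along with Minkowski-indecomposability. The Markov identity $a^{2}+b^{2}+c^{2}=3abc$ with $\gcd(a,b,c)=1$ enters here to guarantee that no common divisor can be extracted from the edge vectors at any stage of the mutation tree.
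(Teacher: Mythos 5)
Your overall strategy (Newton polytope is a simplex, simplices are Minkowski-indecomposable, Ostrowski then gives irreducibility) is close in spirit to the paper's, but the execution rests on three claims that are false for Vianna tori. First, $W_{\ol T_{abc}}$ does \emph{not} have exactly $n+1$ monomials with coefficients $\pm 1$: already for the Chekanov torus ($(a,b,c)=(1,1,2)$, $n=2$) the potential is $u + (1+w)^2/(uw) = u + u^{-1}w^{-1} + 2u^{-1} + u^{-1}w$, with four terms and a coefficient $2$. Its Newton polytope is the triangle with vertices $(1,0)$, $(-1,-1)$, $(-1,1)$, which has the edge vector $(0,2)$ — so your second claim, that \emph{all} edge vectors are primitive, also fails. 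Third, and most seriously, mutations do \emph{not} act on the Newton polytope by unimodular transformations: if they did, all the Vianna polytopes would be $GL(n,\Z)$-equivalent, contradicting exactly the fact (used in Corollary \ref{lotsofleg}, and in Vianna's original argument) that their edges have different lattice lengths. So the inductive "verification" you defer to the mutation tree cannot work as described, and the Markov identity plays no such role.

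The argument is salvageable, because for a \emph{simplex} every Minkowski summand is a homothet $\lambda\Delta + t$, so a single primitive edge already forces $\lambda\in\{0,1\}$ and hence lattice indecomposability; you would then only need to exhibit one primitive edge of $\Newt(W_{\Lambda^{abc}_{n-1}})$ for every Markov triple and every $n$, which you do not do. The paper takes a different and cleaner route that avoids primitivity altogether: it inducts on the dimension $n$ using the suspension structure of the Newton simplex from \cite[Prop.\ 4.6]{chw:tori}, restricts a putative factorization to the facet $y=0$ to invoke the inductive hypothesis, and uses Ostrowski's additivity of Newton polytopes to rule out a factor depending only on $y$; the base case $n=3$ is handled by the triangle irreducibility criterion of \cite{gaopoly}. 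As written, your proposal has a genuine gap at its central step.
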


\begin{proof}
The augmentation polynomial is defined as the polynomial $x^{-v}W_{\ol T_{abc}} $ where $v$ is a vertex of the Newton polytope of $W_{\ol T_{abc}}$. Thus, the augmentation polynomial has the same Newton polytope as that of $W_{\ol T_{abc}}$, up to a translation and change of basis. When $n=3$, since the Newton polytope of $W_{\ol T_{abc}}$ is a triangle, from the Irreducibility Criterion of \cite{gaopoly}, we have that $W_{\Lambda^{abc}_{2}}$ is an irreducible polynomial. For $n>3$ we use induction to finish the proof.
\begin{figure}[ht]
     \centering
     {\tiny 
     \scalebox{1.6}{
\begingroup%
  \makeatletter%
  \providecommand\color[2][]{%
    \errmessage{(Inkscape) Color is used for the text in Inkscape, but the package 'color.sty' is not loaded}%
    \renewcommand\color[2][]{}%
  }%
  \providecommand\transparent[1]{%
    \errmessage{(Inkscape) Transparency is used (non-zero) for the text in Inkscape, but the package 'transparent.sty' is not loaded}%
    \renewcommand\transparent[1]{}%
  }%
  \providecommand\rotatebox[2]{#2}%
  \newcommand*\fsize{\dimexpr\f@size pt\relax}%
  \newcommand*\lineheight[1]{\fontsize{\fsize}{#1\fsize}\selectfont}%
  \ifx\svgwidth\undefined%
    \setlength{\unitlength}{225bp}%
    \ifx\svgscale\undefined%
      \relax%
    \else%
      \setlength{\unitlength}{\unitlength * \real{\svgscale}}%
    \fi%
  \else%
    \setlength{\unitlength}{\svgwidth}%
  \fi%
  \global\let\svgwidth\undefined%
  \global\let\svgscale\undefined%
  \makeatother%
  \begin{picture}(1,0.66666667)%
    \lineheight{1}%
    \setlength\tabcolsep{0pt}%
    \put(0,0){\includegraphics[width=\unitlength,page=1]{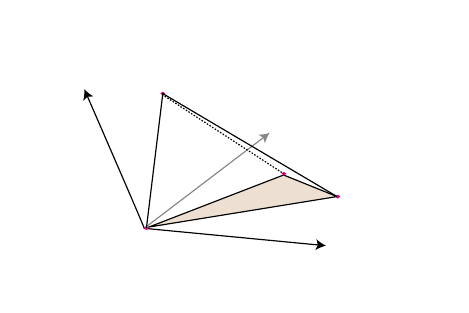}}%
    \put(0.17635496,0.28755966){\makebox(0,0)[lt]{\lineheight{1.25}\smash{\begin{tabular}[t]{l}$y$\end{tabular}}}}%
    \put(0.43695592,0.11906769){\makebox(0,0)[lt]{\lineheight{1.25}\smash{\begin{tabular}[t]{l}$x_1$\end{tabular}}}}%
    \put(0.35439484,0.27183378){\makebox(0,0)[lt]{\lineheight{1.25}\smash{\begin{tabular}[t]{l}$x_2$\end{tabular}}}}%
    \put(0.50042124,0.17916316){\makebox(0,0)[lt]{\lineheight{1.25}\smash{\begin{tabular}[t]{l}$\Newt(W_{\Lambda^{abc}_2})$\end{tabular}}}}%
    \put(0.40606572,0.4436956){\makebox(0,0)[lt]{\lineheight{1.25}\smash{\begin{tabular}[t]{l}$\Newt(W_{\Lambda^{abc}_3})$\end{tabular}}}}%
  \end{picture}%
\endgroup%
}}
     \caption{Newton polytope for two and three-dimensional tori corresponding to to $(a,b,c)$.}
     \label{fig:irredpoly}
 \end{figure}
Assume that $W_{\Lambda^{abc}_{d-1}}$ is irreducible for $d>3$.  By Proposition 4.6 of Chanda-Hirschi-Wang \cite{chw:tori}, the set $\Newt(W_{\Lambda^{abc}_{d}})$ is a $d$ simplex obtained from a suspension of $\Newt(W_{\Lambda^{abc}_{d-1}})$. Pick a basis of $\Z^{d-1}$ such that the face $\Newt(W_{\Lambda^{abc}_{d-1}})$ lies in the cone generated by the first $d-2$ coordinates and $\Newt(W_{\Lambda^{abc}_{d}})$ lies in the positive cone corresponding to the basis. Call the polynomial variables corresponding to this choice of basis $(x_1,\dots,x_{d-1},y)$. By the choice of basis, setting $y=0$ in $W_{\Lambda^{abc}_{d}} (x_1,\dots,x_{d-1},y)$ recovers the augmentation polynomial $W_{\Lambda^{abc}_{d-1}}$. See Figure \ref{fig:irredpoly}.  If the polynomial $W_{\Lambda^{abc}_{d}} (x_1,\dots,x_{d-1},y)$ was reducible, then we will have \[W_{\Lambda^{abc}_{d}} (x_1,\dots,x_{d-1},y) = f(x_1,\dots,x_{d-1},y)g(x_1,\dots,x_{d-1},y).\]
Here $f,g$ are non-constant polynomials. Then, from \cite{ostro} (or Lemma 2.1 in \cite{gaopoly}) we have 
\[ \Newt(W_{\Lambda^{abc}_{d}}) = \Newt(f) + \Newt (g), \] 
where $`+ $' denotes Minkowski sum.
By setting $y=0$, we get a factorization of $W_{\Lambda^{abc}_{d-1}}$, which we know is irreducible. This implies either $f$ or  $g$ is a polynomial depending only oon the variable $y$. Without loss of generality, say $g$ is a polynomial in $y$. Then $\Newt(g)$ is a line-segment along the $y$-axis. Since 
\[ \Newt(W_{\Lambda^{abc}_{d}}) = \Newt(f) + \Newt (g) \]
we have  
\[ \Newt(W_{\Lambda^{abc}_{d-1}}) \subset \Newt(f) . \] 
Proposition 4.6 of Chanda-Hirschi-Wang \cite{chw:tori} 
implies that that $\Newt(W_{\Lambda^{abc}_{k}})$ is a $k$-simplex.  This forces $\Newt(g)$ to be a point, that is, $g$ is a constant polynomial. This is a contradiction, thus $W_{\Lambda^{abc}_{d}}$ is irreducible. 
\end{proof}

 \begin{corollary}\label{lotsofleg}   The Legendrian tori, $\Lambda^{abc}_{n-1}$ and $\Lambda^{a'b'c'}_{n-1}$ in $S^{2n-1}$ corresponding to Markov triples
 $(a,b,c)$ and $(a',b',c')$ for $(a,b,c) \neq (a',b',c')$ are not Legendrian isotopic.
 \end{corollary}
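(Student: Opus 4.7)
The plan is to derive the non-isotopy from the Legendrian isotopy invariance of the augmentation variety (Theorem \ref{legisotopy}) combined with the identification $\Aug(\Lambda^{abc}_{n-1}) = W_{\Lambda^{abc}_{n-1}}^{-1}(0)$ of Theorem \ref{algpotthm}, the irreducibility in Lemma \ref{irredofpot}, and the Newton-polytope calculation of Chanda--Hirschi--Wang \cite{chw:tori}.

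First, suppose for contradiction that there is a Legendrian isotopy $\{\Lambda_t\}$ between $\Lambda^{abc}_{n-1}$ and $\Lambda^{a'b'c'}_{n-1}$. The associated cobordism $L\cong \R\times T^{n-1}$ (as in Example I-\ref{I-isotopy}) gives, via Theorem \ref{legisotopy}, an isomorphism of augmentation varieties that is induced by an isomorphism of the ambient representation tori $\Rep(\Lambda^{abc}_{n-1})\cong \Rep(\Lambda^{a'b'c'}_{n-1})$. Since the isomorphism on representation varieties is induced by an automorphism of $H_1(T^{n-1};\Z)\cong \Z^{n-1}$ followed by translation by a character (reflecting possible relabeling of the distinguished cycles $\cc_1,\dots,\cc_{n-1}$), it has the form of a monomial substitution $y^\lambda\mapsto c_\lambda\, y^{A\lambda}$ for some $A\in GL(n-1,\Z)$ and constants $c_\lambda\in\C^\times$.

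Second, by Theorem \ref{algpotthm}, the augmentation variety is the hypersurface cut out by the augmentation polynomial $W_{\Lambda^{abc}_{n-1}}$, which by Lemma \ref{irredofpot} is irreducible; the same holds for $W_{\Lambda^{a'b'c'}_{n-1}}$. Under the monomial substitution above, two irreducible Laurent polynomials cut out the same hypersurface on a torus only if they agree up to a unit, i.e., a nonzero scalar times a Laurent monomial. Pulling this back through the substitution forces
\[
\Newt\bigl(W_{\Lambda^{a'b'c'}_{n-1}}\bigr) \;=\; A\cdot \Newt\bigl(W_{\Lambda^{abc}_{n-1}}\bigr) \;+\; v
\]
for some $v\in\Z^{n-1}$. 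In other words, the two Newton polytopes must be $GL(n-1,\Z)$-equivalent up to translation.

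Finally, by Proposition~4.6 of Chanda--Hirschi--Wang \cite{chw:tori}, the Newton polytopes $\Newt(W_{\ol T_{abc}})$ attached to distinct Markov triples $(a,b,c)$ are pairwise distinct under the action of integral affine transformations (they have different combinatorial invariants inherited from the triangle in dimension three by iterated suspension). Since the augmentation polynomial $W_{\Lambda^{abc}_{n-1}}=x^{-v}W_{\ol T_{abc}}$ only differs from $W_{\ol T_{abc}}$ by a monomial (Remark \ref{rm:augscheme}), its Newton polytope is a translate of $\Newt(W_{\ol T_{abc}})$, hence still an affine invariant of $(a,b,c)$. This contradicts the equivalence derived above, completing the proof.

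The main obstacle is the middle step: rigorously passing from an isomorphism of the hypersurfaces (as algebraic varieties inside the representation tori) to equality of their Newton polytopes up to integral affine transformation. The key inputs are the irreducibility from Lemma \ref{irredofpot}, which ensures the defining polynomial is determined (up to unit) by its zero locus, and the classification of automorphisms of an algebraic torus, which limits how $\Rep(\Lambda^{abc}_{n-1})$ can be identified with $\Rep(\Lambda^{a'b'c'}_{n-1})$ by an isotopy.
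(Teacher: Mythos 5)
Your proposal is correct and follows essentially the same route as the paper: Legendrian isotopy invariance of the augmentation variety (Theorem \ref{legisotopy}), the identification $\Aug(\Lambda)=W_\Lambda^{-1}(0)$ from Theorem \ref{algpotthm}, irreducibility from Lemma \ref{irredofpot} to pin down the defining polynomial up to a unit, and the Vianna/Chanda--Hirschi--Wang fact that Newton polytopes of distinct Markov triples are not $GL(n-1,\Z)$-affinely equivalent. The only difference is that you spell out the monomial-substitution step (automorphism of $H_1$ plus a character twist) that the paper compresses into ``after identification of first homology groups,'' which is a welcome but not essentially new elaboration.
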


\begin{proof} The argument is essentially the same as Vianna's argument that tori corresponding to distinct Markov triples are not Hamiltonian isotopic \cite{vianna:inf}.  Suppose that the Legendrians $\Lambda^{abc}_n$ and $\Lambda^{a'b'c'}_n$
are Legendrian isotopic.  After identification of first homology groups, 
we obtain an isomorphism of augmentation varieties 
\[ \Aug(\Lambda^{abc}_n)
\to \Aug(\Lambda^{a'b'c'}_n), 
\quad \Aug_\white(\Lambda^{abc}_n) \to \Aug_\white(\Lambda^{a'b'c'}_n). \] 
The corresponding augmentation polynomials are irreducible from Lemma \ref{irredofpot}. Thus, from Theorem \ref{algpotthm}, we have that the Newton polytope of $W_{\Lambda^{abc}_{n-1}}$ and $W_{\Lambda^{a'b'c'}_{n-1}}$ are the same up to change of basis. 
%
%
As explained in Vianna \cite{vianna:inf}, 
and Chanda-Hirschi-Wang \cite{chw:tori},
this equality is impossible because, for example, their Newton polygons cannot be related by any 
$GL(n-1,\Z)$-transformation, as their edges have different lattice lengths.

\end{proof}

\begin{remark}
    We could circumvent the need to prove the irreducibility of $W_{\Lambda^{abc}_{n-1}}$ if we used the equality of the augmentation schemes as discussed in Theorem \ref{augidealthm} instead of the equality of augmentation varieties.
\end{remark}

\section{Geometric augmentation varieties}

The augmentation variety has distinguished subvarieties corresponding to fillings. For a Lagrangian filling $L$ of $\Lambda$, there is an inclusion $H_2(Y,\Pi) \to H_2(\ol{X},\ol{L})$ which induces a map from $\hat{G}(\Lambda)$ to $\hat{G}(L)$. So we can change coefficients from $CE(\Lambda, \hat{G}(\Lambda))$ to $CE(\Lambda,\hat{G}(L))$. In the following, we write  $CE(\Lambda, \hat{G}(\Lambda))$ as $CE(\Lambda)$ to simplify notation.

\begin{definition} {\rm (Geometric augmentation variety)}
For any tamed  filling $L$ equipped with a bounding cochain $b \in MC(L)$ let 
\[ \varphi{(L,b)} : CE(\Lambda) \to \hat{G}(L) \]
denote the corresponding augmentation.   Let
\[ \cI_{(L,b)}(\Lambda) =  \ker(\varphi{(L,b)} ) \]
the corresponding augmentation ideal.  Let 
\[ \Aug_{L,b}(\Lambda) \subset \Aug(\Lambda) \] 
denote the variety defined by $\cI_{(L,b)}(\Lambda).$
Denote 
\[ \Aug_{\on{geom}}(\Lambda) = \bigcup_{(L,b)} \Aug_{(L,b)}(\Lambda) \] 
the union over tamed fillings with bounding chains $(L,b)$.  This ends the Definition.
\end{definition}

It will be useful to note that fillings of 
the perturbed and unperturbed Legendrians define augmentations of the same Chekanov-Eliashberg algebra. 

\begin{lemma} Let $\Lambda_\eps$ be a family of Legendrians in a stable Hamiltonian  manifold $(Z,\alpha_\eps, \omega)$, with $\alpha_\alpha$ converging to a contact
one-form $\alpha$ with corresponding two-form $ \omega = - \d \alpha$.    Then the Chekanov-Eliashberg algebras $CE(\Lambda_\eps)$ and $CE(\Lambda)$ are quasi-isomorphic, and equal for suitable choices of almost complex structure and perturbation data. 
In particular, any augmentation of $CE(\Lambda_\eps)$ 
defines an augmentation of $CE(\Lambda)$.
\end{lemma}

\begin{proof}
Indeed, for $\eps$ sufficiently small, the moduli spaces $\M(\Lambda)$
and $\M(\Lambda_\eps)$ are in bijection assuming the same almost complex structure and Morse function is used for both Legendrians.   The 
Chekanov-Eliashberg algebra $CE(\Lambda_\eps)$ has completed
coefficient ring $\hat{G}(\Lambda_\eps)$ which is a completion of 
$G(\Lambda) \cong G(\Lambda_\eps)$.   Therefore, we have a chain map 
corresponding to the completion $CE(\Lambda) \to CE(\Lambda_\eps) .$
\end{proof}

We compute the augmentation variety corresponding to the Harvey-Lawson filling.
Recall from I-\eqref{I-hl} that the Harvey-Lawson filling is 
\[ L_\eps = \{ |z_1|^2 = |z_2|^2 + \eps = |z_3|^2 + \eps , \quad 
z_1 z_2 z_3 \in (0,\infty) \}   \]
and fills the Clifford Legendrian $\Lambda_\eps = L_\eps \cap S^{5} \cong T^{2}$.

\begin{proposition} \label{augcliff} The geometric augmentation variety 
$\Aug_{\on{geom}}(\Lambda)$ of the Clifford Legendrian $\Lambda \subset S^5$ equipped with the spin structure corresponding to the element $(0,1) \in H_1(L_\eps,\Z_2)$ and trivial chain $b = 0 $ is
\[ \Aug_{(L_\eps,0)}(\Lambda) = \Aug(\Lambda)  = \{ 1 + y_1 - y_2 \} .\]
\end{proposition}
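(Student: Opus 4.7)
The plan is to combine Theorem \ref{algpotthm}, which identifies the full augmentation variety with the zero locus of the disk potential, with the explicit filling augmentation attached to $L_\eps$. Since $\Lambda_\eps$ is a small perturbation of the Legendrian lift of the monotone Clifford torus in $\C P^{n-1}$, Theorem \ref{algpotthm} applies and gives $\Aug(\Lambda_\eps) = W_{\Lambda_\eps}^{-1}(0)$; the problem thus reduces to (i) identifying the disk potential as $1 + y_1 - y_2$, and (ii) showing that the filling augmentation $\varphi(L_\eps,0)$ saturates this hypersurface.

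For step (i), I would enumerate the Maslov-index-two disks that contribute to the partial computation of $\delta^{\ab}(\aa)$ recorded in \eqref{augconstraint}. The toric-type symmetry of the coordinates $(|z_1|^2, |z_2|^2, |z_3|^2)$ on $\C^3$ produces three families of such disks, with boundary classes in $H_1(\Lambda_\eps)$ summing to zero. With the trivial spin structure all three contribute with $+1$ signs, giving the familiar Clifford-type potential $1 + y_1 + y_2$. The chosen spin structure $(0,1) \in H_1(L_\eps,\Z_2)$ twists the sign of each disk by the mod-two intersection of its boundary with this class, and a direct check shows that exactly one of the three disks changes sign; after selecting the basis of $H_1(\Lambda_\eps)$ appropriately, we obtain $W_{\Lambda_\eps} = 1 + y_1 - y_2$.

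For step (ii), the inclusion $\Aug_{(L_\eps,0)}(\Lambda_\eps) \subseteq \Aug(\Lambda_\eps)$ is automatic from the definition. For the reverse inclusion, construct the augmentation $\varphi(L_\eps,0) \colon CE(\Lambda_\eps) \to \hat G(L_\eps)$ using the filling construction of Part II. Since $L_\eps$ deformation retracts onto $\Lambda_\eps \cong T^2$, the coefficient ring $\hat G(L_\eps)$ is a completion of the Laurent polynomial ring on $H_1(L_\eps) \cong \Z^2$, and in particular an integral domain. Relation \eqref{augconstraint} then forces $1 + \varphi(y_1) - \varphi(y_2) = 0$, so that $J := \cI_{(L_\eps,0)}(\Lambda_\eps) \cap \C[\Rep(\Lambda_\eps)]$ contains the prime ideal $(1 + y_1 - y_2)$. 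To rule out $J$ being strictly larger, observe that $\C[\Rep(\Lambda_\eps)]/J$ embeds into the integral domain $\hat G(L_\eps)$, so $J$ is prime; since $\varphi(y_1) = [\mu_1]\exp(\varphi(\cc_1))$ is a unit of $\hat G(L_\eps)$ whose power-series part is nontrivial, its image is transcendental over $\C$, ruling out $J$ being a maximal (zero-dimensional) ideal. Hence $J = (1 + y_1 - y_2)$ and $\Aug_{(L_\eps,0)}(\Lambda_\eps) = \{1 + y_1 - y_2 = 0\}$.

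The main obstacle will be the sign calculation in step (i): pinning down exactly which of the three Maslov-two disks switches sign under the spin structure $(0,1) \in H_1(L_\eps,\Z_2)$, and matching this to a coherent choice of basis for $H_1(\Lambda_\eps)$ so that the potential takes the precise form $1 + y_1 - y_2$ rather than some other sign pattern. Once the signs are fixed, Theorem \ref{algpotthm} together with the short integral-domain argument above combine to give the stated equality.
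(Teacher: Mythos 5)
Your argument is essentially correct but reaches the conclusion by a different mechanism than the paper. The paper does not invoke Theorem \ref{algpotthm} here: it computes the filling augmentation explicitly on the Morse generators (Lemma \ref{morseaug}: $\varphi{(L_\eps,0)}(\cc_1)=0$ and $\varphi{(L_\eps,0)}(\cc_2)=\ln(1+[\mu_1])$, by analyzing which boundary points of the Blaschke disks can flow into the unstable manifolds of $\cc_1,\cc_2$ inside $L_\eps\cong S^1\times\R^2$), and then shows in Lemma \ref{projaug} that the projection of $\Aug_{(L_\eps,0)}(\Lambda_\eps)$ onto the first $\C^\times$-factor is surjective, so the geometric locus is a curve and hence equals the irreducible hypersurface $\{1+y_1-y_2=0\}$ that contains it. You replace this dimension count by a commutative-algebra argument: $J=\ker(\varphi)\cap\C[\Rep(\Lambda_\eps)]$ contains the height-one prime $(1+y_1-y_2)$, is prime because $\C[\Rep(\Lambda_\eps)]/J$ embeds in the domain $\hat{G}(L_\eps)$, and is not maximal because $\varphi(y_1)\notin\C$; since any prime strictly containing a height-one prime of the two-dimensional Laurent ring is maximal, $J=(1+y_1-y_2)$. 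That is a legitimate alternative to Lemma \ref{projaug}, and using Theorem \ref{algpotthm} for the middle equality is fine provided you add the identification $CE(\Lambda_\eps)\simeq CE(\Lambda)$ (Gray stability, and the bijection of moduli spaces for small $\eps$) that the paper sets up just before the proposition, since $\Lambda_\eps$ itself covers a non-monotone torus, so the theorem does not apply to it verbatim.

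Two corrections are needed, though neither is fatal. First, $L_\eps$ does not deformation retract onto $\Lambda_\eps\cong T^2$: the Harvey--Lawson filling is diffeomorphic to $S^1\times\R^2$, so $H_1(L_\eps)\cong\Z$ and the class $\mu_2$ bounds in the filling --- this is precisely why $\varphi(\cc_2)=\ln(1+[\mu_1])$ in Lemma \ref{morseaug}. Your argument survives because all it uses is that $\hat{G}(L_\eps)$ (a completion of $\C[H_1(L_\eps)]$) is an integral domain and that $\varphi(y_1)$ is not a scalar, and both statements hold with $H_1(L_\eps)\cong\Z$; but you should assert the domain property directly rather than derive it from the incorrect homology computation. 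Second, ``transcendental over $\C$'' is more than you need and more than you justify; what is needed is only that $\varphi(y_1)=[\iota_*\mu_1]\exp(\varphi(\cc_1))$ is not a scalar, which follows because with $b=0$ every contribution to $\varphi(\cc_1)$ has positive energy, so $\exp(\varphi(\cc_1))=1+(\text{higher order terms})$ and the leading term of $\varphi(y_1)$ is the nontrivial group element $[\iota_*\mu_1]$. The sign issue you flag as the main obstacle is resolved in the paper at the same level of detail as your sketch, by citing the orientation reversal of the disk moduli space for the component on which the spin structure $(0,1)$ is nontrivial.
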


The proof is by a series of Lemmas.  We first compute the images of the degree one Morse generators.

\begin{lemma} \label{morseaug} The augmentation $ \varphi{(L_\eps,0)}  $ for the Harvey-Lawson filling $L_\eps \subset \C^5$ satisfies 
\begin{equation} \label{dd2}  \varphi{(L_\eps,0)} (\cc_1) = 0 , \quad \varphi{(L_\eps,0)} (\cc_2 ) = \ln (1 +  [\mu_1])
. \end{equation}
On any of the other $\cc_3,\cc_4$  in $\cI_\black(\Lambda)$, the augmentation 
$\varphi(L_\eps,0) $ vanishes for reasons of degree.  
\end{lemma}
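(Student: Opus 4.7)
The plan is to dispose of the generators $\cc_3, \cc_4$ by a grading argument and to pin down $\varphi(L_\eps, 0)(\cc_1)$ and $\varphi(L_\eps, 0)(\cc_2)$ by combining a direct holomorphic disk count on the filling with the algebraic augmentation constraint coming from the shortest Reeb chord.

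The vanishing on $\cc_3$ and $\cc_4$ is formal. Among the generators of $\cI_\black(\Lambda)$ for the Clifford Legendrian, only the ones corresponding to index-one Morse critical points of $\Lambda$ lie in CE degree zero; these are precisely $\cc_1, \cc_2$. The remaining generators $\cc_3, \cc_4$ have nonzero CE degree (coming from Morse critical points of index different from one, or from Reeb chords of higher action). Since $\varphi(L_\eps, 0)$ is a graded dga map into $\hat{G}(L_\eps)$, a ring concentrated in degree zero, any generator of nonzero degree is automatically sent to zero.

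For $\cc_1$ and $\cc_2$ I would first identify the effect of the inclusion $H_1(\Lambda_\eps) \to H_1(L_\eps)$. From the explicit description of the filling as the locus $\{|z_1|^2 = |z_2|^2 + \eps = |z_3|^2 + \eps,\ z_1 z_2 z_3 \in (0,\infty)\}$, the filling is homotopy equivalent to a solid torus with $H_1(L_\eps) \cong \Z$: one of the two meridional basis cycles of $\Lambda_\eps$, say $\mu_2$, bounds a disk in $L_\eps$, while $\mu_1$ survives as a generator. Under the induced map $\hat{G}(\Lambda) \to \hat{G}(L_\eps) \cong \C[[\mu_1]]$ one therefore has $[\mu_2] \mapsto 1$ while $[\mu_1]$ remains a formal variable. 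Next I would establish $\varphi(L_\eps, 0)(\cc_1) = 0$ by a direct count of rigid holomorphic disks in $\bar{X}$ with boundary on $\bar{L}_\eps$ and one boundary marked point mapped to the stable manifold $\Sigma^s_{\cc_1}$ representing $\cc_1$. The Harvey-Lawson filling carries a residual torus symmetry obtained by restricting the standard $T^{n-1}$-action on $\C^n$ to the subtorus preserving the ray $z_1 z_2 \cdots z_n \in (0,\infty)$, and $\Sigma^s_{\cc_1}$ can be chosen invariant under a circle subgroup that acts freely on each nonconstant disk contributing to the count, so the signed total must vanish.

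With $\varphi(L_\eps, 0)(\cc_1) = 0$ and $[\mu_2] = 1$ in place, the augmentation identity of Theorem \ref{algpotthm} applied to the shortest Reeb chord $\aa$ yields
\begin{equation*}
0 = \varphi(L_\eps, 0)\bigl(\delta^{\ab}(\aa)\bigr) = 1 + [\mu_1] - e^{\varphi(L_\eps, 0)(\cc_2)},
\end{equation*}
using the Clifford potential $W_\Lambda = 1 + y_1 - y_2$ for the chosen nontrivial spin structure. Solving uniquely inside $\C[[\mu_1]]$ produces $\varphi(L_\eps, 0)(\cc_2) = \ln(1 + [\mu_1])$. The main obstacle is the geometric vanishing $\varphi(L_\eps, 0)(\cc_1) = 0$: the algebraic augmentation equation alone admits a one-parameter family of formal solutions parametrized by $\varphi(\cc_1)$ with zero constant term, so an additional geometric input from the disk moduli theory on the Harvey-Lawson filling, such as the symmetry argument above or an explicit index computation ruling out the relevant moduli space, is essential to single out the specific augmentation asserted.
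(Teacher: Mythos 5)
Your treatment of $\cc_3,\cc_4$ (degree reasons), your identification of the coefficient map ($[\mu_2]\mapsto 1$, $[\mu_1]$ surviving in $H_1(L_\eps)\cong\Z$), and your derivation of $\varphi(L_\eps,0)(\cc_2)=\ln(1+[\mu_1])$ from the single relation $\varphi(\delta^{\ab}(\aa))=0$ all coincide with the paper's proof; the only step argued differently is the crucial geometric input $\varphi(L_\eps,0)(\cc_1)=0$, and there your argument has a genuine gap. The circle subgroups of the residual torus preserving $z_1z_2z_3\in(0,\infty)$ do \emph{not} act freely on the contributing disks: every holomorphic disk bounding $L_\eps$ is a Blaschke product in the locus $\{z_2=z_3=0\}$, which is invariant under the whole residual torus, so each such disk (in particular every multiple cover $z\mapsto z^d$) is a \emph{fixed point} of the induced action on the moduli space. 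Indeed this is exactly what Section \ref{mcov} of the paper exploits: for the analogous count with the constraint dual to $\cc_2$, the fixed covers contribute the nonzero amounts $(-1)^{d-1}/d$, which shows that symmetry of the ambient data alone cannot force a vanishing count. At best the action is free on \emph{marked} configurations, but then two further problems appear: the moduli spaces are obstructed (the normal bundle of the covers is $\mO_\R(-d)\oplus\mO_\R(-d)$), so invariant almost complex structures and invariant constraint cycles are not transverse and a ``free action implies empty rigid moduli'' argument does not apply to the perturbed/virtual count without the obstruction-bundle and perturbation-comparison machinery of Section \ref{mcov}; and the invariant representative of $\Sigma^s_{\cc_1}$ (the preimage of a circle of the form $S^1\times\{1\}$, i.e.\ $S^1$ times a ray in $\R^2\smallsetminus\{0\}$) is not in general position, since its closure contains the boundary circle $S^1\times\{0\}$ swept by all the unperturbed disk boundaries.

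The paper avoids all of this by a purely topological argument: after a generic perturbation making the disk boundaries avoid $S^1\times\{0\}$, a configuration contributing to $\varphi(\cc_i)$ forces the trajectory to land on the preimage of the unstable manifold of $\cc_i$ under the gradient-flow projection $S^1\times(\R^2\smallsetminus\{0\})\to S^1\times S^1$; choosing that unstable manifold to be a small translate of $S^1\times\{1\}$, the count for $\cc_1$ is the self-intersection number of $S^1\times\{1\}$ in the torus (no intersection points), while for $\cc_2$ it is the intersection of $S^1\times\{1\}$ with $\{1\}\times S^1$ (one point). If you want to keep a symmetry-flavored proof, you would have to recast it as an equivariant obstruction-bundle computation (with an invariant constraint and a verification that the constrained, compactified moduli space carries a free action and constant-rank obstruction), which is considerably more work than the paper's intersection-number argument; as written, the assertion that a suitable circle ``acts freely on each nonconstant disk contributing to the count'' is false and the vanishing of $\varphi(L_\eps,0)(\cc_1)$ is not established.
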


\begin{proof}   First we note that the images of the Morse generators
under the augmentation must satisfy a relation. By definition of augmentation,
\[ \varphi{(L_\eps,0)} ( \delta^{\ab}(\aa)) = \varphi{(L_\eps,0)} ( 
1+  
[\mu_1] \exp(\cc_1) - 
[\mu_2] \exp(\cc_2) ) = 0  .\]
Here the sign of the $[\mu_1]$-term is positive, matching the sign of the Maslov index two
disk in \cite{chooh:fano}, while the sign of the $[\mu_2]$-term is negative since  the change in spin structure reverses the orientation of the moduli space of disks, by the discussion in \cite[8.1.2]{fooo:part2}.

Now we claim that the image of the first generator vanishes.  
The image of each $\cc_i$ under $\varphi(L_\eps,0)$ is a count of disks 
in $L_\eps$ with an incoming trajectory of $f_\black$ limiting to $\cc_i$.
We may assume that 
\[ f_{L_\eps}: L_\eps \cong S^1 \times \R^2 \to \R \] 
is the sum of the standard
height function on $S^1$ and a quadratic function on $\R^2$, and after a generic perturbation 
the images of the holomorphic disks are disjoint from $S^1 \times \{ 0 \}$.  Thus, if $u_v: S_v \to \C^3$
is a disk connecting to a trajectory $u_e: T_e \to L_\eps$, the limiting point of the 
trajectory at infinity is the limit of the flow of the projection of $u_e(T_e \cap S_v)$
under the map  $S^1 \times \R^2 - \{ 0 \} \to S^1 \times S^1$ induced by the gradient flow.  
Thus,$u_e(T_e \cap S_v)$ must map to the inverse image of the unstable manifold of $\cc_i$
under the projection from $S^1 \times \R^2$.   We may assume that the unstable manifold for $\cc_i$
is a small translate of $S^1 \times \{ 1 \}$ in the second coordinate.  Then 
there are no possibilities for $u_e(T_e \cap S_v)$ if $i = 1$, corresponding to the self-intersections
of the cycle $S^1 \times \{ 1 \}$ and exactly 
one possibility if $i = 2$, corresponding to the intersections of the cycles
$S^1 \times \{ 1 \} $ and $\{1 \} \times S^1$.  Thus
\[\varphi{(L_\eps,0)} (\cc_1) = 0 .\] 

The relation in the first paragraph now determines the value of the 
augmentation on the second generator. We have from the fact that $\varphi{(L_\eps,0)} \delta(\aa) = 0 $ that
\begin{equation} \label{dd22} \varphi{(L_\eps,0)} (\cc_2 ) = \ln ( 1 +  [\mu_1]
)  \end{equation}
as claimed.
\end{proof}

\begin{remark}  The formula for the image of $\cc_2$ is compatible with the localization computations in  Ooguri-Vafa \cite{ooguri}, Lin \cite[Section 5.7]{lin:open}, and Pandharipande-Solomon-Walcher \cite{psw:disk}. 
Indeed, the fixed point sets of the natural torus action are the multiple coveres $u_{(d)}(z) = z^d$ which count towards the computation with a  factor $(-1)^{d-1}/d$ by the localization computation similar to one given there.  More generally, in arbitrary dimension
we partially compute the augmentation associated
to the Harvey-Lawson filling in higher dimensions using localization. 
As in \eqref{dd2}, denote the standard Morse function
\[ f_\black: \Lambda \cong (S^1)^{n-1} \to \R \] 
given by the sum of the heights on each factor.  Let $\cc_1,\ldots, \cc_{n-1} \in CE(\Lambda)$ denote the associated degree 
one generators.    The
augmentation ${\varphi}$ defined by the Harvey-Lawson filling $L_{(1)}$ has 
\[ \begin{array}{ll}
 \varphi(\cc_i) = 0 &  i < {n-1} \\
\varphi( \cc_{n-1}) = 
\ln( 1 +  [\mu_1] -  \ldots +
     [\mu_{n-2}])  &  \\
 \varphi({[\mu_i]}) = {[\mu_i]} &  i < n-1 \\
\varphi([\mu_{n-1}]) =0 & \end{array}  .\] 
Indeed, let $\M(L_{(3)})$ be the moduli space of disks bounding the
  three-dimensional Harvey-Lawson filling $L_{(3)} \subset \C^3$. 
  There is a finite-to-one map of moduli spaces
\[ \phi: \M_{(n)} \to \M_{(3)}, \quad  u = (u_1,\ldots, u_{n-2},u_{n-1},u_n) \mapsto \left( \prod_{j=1}^{n-2} u_j, u_{n-1}, u_n \right) .\]
whose generic fiber has order
\[ \deg(\phi) = \left( \begin{array}{c} d \\   d_1 \ldots d_{n-2} 
                            \end{array} \right)    .
                            .\]
                            We have 
\begin{equation} \label{order} \# \phi^{-1}(u) = \left( \begin{array}{l} d \\ {} d_1 \ldots d_{n-2} \end{array} \right) .\end{equation}
By the computation of the multiple cover contributions
in \ref{hl3} and \eqref{order},
\begin{eqnarray*} \varphi( \cc_{n-1}) &=& \sum_{d_1,\ldots,d_{n-2}}
\left( \begin{array}{c} d  \\ d_1 \ldots d_{n-2}  \end{array} \right) 
\frac{ 
(-1)^{d_1 + \ldots + d_{n-2} - 1}
[\mu_1]^{d_1}\ldots 
  [\mu_{n-2}]^{d_{n-2}}}{ (d_1 + \ldots + d_{n-2}) } \\
 &=&   \ln( 1 + [\mu_1] + \ldots +
     [\mu_{n-2}]) . \end{eqnarray*}
\end{remark}

\begin{lemma} \label{projaug}
    The projection of $\Aug_{(L_\eps,0)}(\Lambda)$ onto the first factor in $\Rep(\Lambda) \cong (\C^\times)^2$
    is surjective.
\end{lemma}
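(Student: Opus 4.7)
The plan is to combine the evaluation of $\varphi{(L_\eps,0)}$ on the Morse generators from Lemma \ref{morseaug} with the topology of the Harvey-Lawson filling to write down the augmentation ideal on the coordinate ring $\C[\Rep(\Lambda)]$ explicitly, and then to produce a point of $\Aug_{(L_\eps,0)}(\Lambda)$ lying over each prescribed first coordinate.

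Concretely, $L_\eps$ deformation retracts onto the $z_1$-phase circle, so that $H_1(L_\eps) \cong \Z$, with $\mu_1$ mapping to a generator and $\mu_2$ bounding a disk (it is exactly the torus cycle that collapses in the limit $|z_1|^2 \to \eps$). Feeding this into Lemma \ref{morseaug} yields $\varphi{(L_\eps,0)}(y_1) = [\mu_1]$ and $\varphi{(L_\eps,0)}(y_2) = 1 + [\mu_1]$ in $\hat{G}(L_\eps)$, so the intersection of $\ker(\varphi{(L_\eps,0)})$ with $\C[\Rep(\Lambda)]$ is the principal ideal $(1+y_1-y_2)$, and $\Aug_{(L_\eps,0)}(\Lambda)$ is the curve $\{(a,1+a)\}$ in $(\C^\times)^2$.

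Given any $\hat{y}_1 \in \C^\times$ with $\hat{y}_1 \ne -1$, I would post-compose $\varphi{(L_\eps,0)}$ with the evaluation character $\hat{G}(L_\eps) \to \C$, $[\mu_1] \mapsto \hat{y}_1$, obtaining a $\C$-valued augmentation that sends $y_1 \mapsto \hat{y}_1$ and $y_2 \mapsto 1+\hat{y}_1$. This realizes $(\hat{y}_1, 1+\hat{y}_1)$ as an honest point of $\Aug_{(L_\eps,0)}(\Lambda)$ projecting to $\hat{y}_1$; running $\hat{y}_1$ over $\C^\times \setminus \{-1\}$ then hits every first-coordinate value except possibly $-1$.

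The main subtlety is exactly the missing value $\hat{y}_1 = -1$, at which the corresponding $y_2$-value would be $0$ and so leave the ambient torus $(\C^\times)^2$. One handles this in either of two equivalent ways: interpret ``surjective'' in the statement as \emph{dominant} (Zariski-dense image in $\C^\times$), which is all that the subsequent proof of Proposition \ref{augcliff} requires, since one only needs $\Aug_{(L_\eps,0)}(\Lambda)$ to have the same dimension as the ambient irreducible hypersurface $\{1+y_1-y_2=0\}$; or pass to the scheme-theoretic augmentation variety as in Remark \ref{rm:augscheme}, where the single formal point over $-1$ is already captured by the ideal $(1+y_1-y_2)$ itself.
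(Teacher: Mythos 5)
Your proposal is correct and reaches the right conclusion, but it takes a longer route than the paper and, in doing so, somewhat inverts the paper's logical order. The paper's proof is a one-line algebraic observation: from Lemma~\ref{morseaug}, $\varphi(L_\eps,0)(y_1)=\varphi([\mu_1])\exp(\varphi(\cc_1))=[\mu_1]$, which is a free (transcendental) element of $\hat{G}(L_\eps)$; hence no nonzero polynomial in $y_1$ lies in $\ker(\varphi(L_\eps,0))$, so the ideal cutting out $\Aug_{(L_\eps,0)}(\Lambda)$ contains no polynomial in $y_1$ alone, and the projection is dominant. Nothing about the second coordinate or the explicit shape of the variety is needed.

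You instead compute the full ideal $\ker(\varphi(L_\eps,0))\cap\C[\Rep(\Lambda)]=(1+y_1-y_2)$ and then check pointwise that each $\hat{y}_1\neq -1$ is hit. This is essentially proving Proposition~\ref{augcliff} first and deducing the lemma afterwards, whereas the paper uses the lemma as an ingredient in that proposition; not incorrect, but redundant. The detour through evaluation characters $\hat G(L_\eps)\to\C$, $[\mu_1]\mapsto\hat y_1$ is likewise unnecessary: once you know the ideal is $(1+y_1-y_2)$, the image of the projection is read off directly from the curve, without manufacturing new $\C$-valued augmentations.

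One genuine point you make that the paper glosses over: the set-theoretic projection of $\{1+y_1-y_2=0\}\subset(\C^\times)^2$ onto $y_1$ misses $\hat y_1=-1$, so strict surjectivity fails and ``surjective'' should be read as dominant (which is all that Proposition~\ref{augcliff} requires). Your observation that the paper's argument really establishes density of the image — or, equivalently, that no polynomial in $y_1$ is in the ideal, which is a scheme-theoretic statement — is a worthwhile refinement of the statement, and matches the spirit of Remark~\ref{rm:augscheme}.
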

\begin{proof}  We claim that there is no polynomial in the first coordinate that vanishes on the augmentation variety.  Since 
\[ \varphi{(L_\eps,0)}([\mu_1]) = [\mu_1], \quad \varphi{(L_\eps,0)}(\cc_1)
= 0, \]
any polynomial in $w_i$ maps to the corresponding polynomial in $[\mu_i]$ under $\varphi{(L_\eps,0)}$.
So there is no polynomial in $w_i$ that vanishes under $ \varphi{(L_\eps,0)}$.  
Thus, the projection of the augmentation variety on the first factor  of 
$(\C^\times)^2$ is surjective. 
\end{proof}

\begin{proof}[Proof of Proposition \ref{augcliff}]
By Lemma \ref{projaug}, $\Aug(\Lambda)$ 
is a curve and so the containment in the zero locus of the potential 
is an equality:
\[ \Aug_{(L,b)}(\Lambda) = \{ 1 +  y_1 -  y_2  = 0 \} .\]
\end{proof} 

\begin{lemma}  The geometric augmentation variety of the Clifford Legendrian $\Lambda \cong T^{n-1} \subset S^{2n-1}$ with the trivial spin structure is empty.
    \end{lemma}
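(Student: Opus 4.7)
The approach is a positivity argument by contradiction that exploits the distinctive all-positive sign pattern of the Chekanov-Eliashberg differential when the spin structure on $\Lambda$ is trivial. First I would recall from Example \ref{algaug} that for the trivial spin structure the differential of the length-minimal Reeb chord $\aa$ takes the form
\[ \delta^{\ab}(\aa) = 1 + [\mu_1]\exp(\cc_1) + \ldots + [\mu_{n-1}]\exp(\cc_{n-1}) \]
with all positive signs, in contrast to the mixed-sign formula for the fillable non-trivial spin structure used in Proposition \ref{augcliff}. Suppose for contradiction that a tamed filling $(L,b)$ induced a geometric augmentation $\varphi := \varphi(L,b)$ of $\Lambda$ in this case. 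Applying $\varphi$ to the identity $\varphi \circ \delta = 0$ then yields, in $\hat G(L)$, the constraint
\[ 0 = 1 + [\nu_1]\exp(\varphi(\cc_1)) + \ldots + [\nu_{n-1}]\exp(\varphi(\cc_{n-1})) , \]
where $\nu_i \in H_1(L)$ denotes the image of $\mu_i$ under the inclusion-induced map $H_1(\Lambda) \to H_1(L)$.

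The key step is to evaluate this constraint under the trivial representation $\epsilon : \hat G(L) \to \R$ sending $[\nu] \mapsto 1$ for every $\nu \in H_1(L)$. Because the structure constants of the Chekanov-Eliashberg algebra and of its cobordism maps are rational---arising from signed integer disk counts together with the rational-coefficient formal expansions such as $\exp$ and $\log$ that appear in the abelianization---each image $\varphi(\cc_i)$ lies in the real sub-ring $\hat{\R}[H_1(L)] \subset \hat G(L)$. Consequently $r_i := \epsilon(\varphi(\cc_i))$ is a real number, and $\epsilon(\exp(\varphi(\cc_i))) = \exp(r_i) > 0$ since $\epsilon$ commutes with the formal exponential. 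Applying $\epsilon$ to the constraint then gives
\[ 0 = 1 + \exp(r_1) + \ldots + \exp(r_{n-1}), \]
but the right-hand side is a sum of $n$ strictly positive real numbers, hence strictly positive---a contradiction, forcing $\Aug_{\on{geom}}(\Lambda)$ to be empty.

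The main obstacle will be verifying the technical inputs underpinning this positivity argument. On one hand, I need $\epsilon$ to extend to a bona fide ring homomorphism out of the completion $\hat G(L)$ and in particular to converge when applied to $\varphi(\cc_i)$ and to $\exp(\varphi(\cc_i))$; this is controlled by monotonicity, which bounds from below the areas (equivalently the Maslov indices) of the disks that contribute. On the other hand, the reality of $\varphi(\cc_i)$ must be extracted from the integrality of the disk counts defining $\varphi$ together with the rationality of the $\exp/\log$ formulas used in passing to the abelianized Chekanov-Eliashberg differential and its cobordism maps.
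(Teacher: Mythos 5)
Your core idea---exploiting the uniform $+1$ signs of $\delta^{\ab}(\aa)$ for the trivial spin structure to derive a positivity contradiction---is the right one, and it is in fact the same mechanism that drives the paper's proof. But your implementation has a genuine gap where you yourself flagged an obstacle: the ``trivial representation'' $\epsilon : \hat{G}(L) \to \R$, $[\nu] \mapsto 1$, does not extend from the group ring $\C[H_1(L)]$ to its Novikov-type completion $\hat{G}(L)$. Elements of $\hat G(L)$ such as $\varphi(\cc_i)$ and $\exp(\varphi(\cc_i))$ are formal power series, finite at each energy level but infinite overall, and evaluating all group elements at $1$ turns them into numerical series that need not converge. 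This is not a hypothetical worry: in the paper's own Harvey-Lawson computation one has $\varphi(\cc_{n-1}) = \ln\bigl(1 + [\mu_1] + \ldots + [\mu_{n-2}]\bigr)$, and the termwise evaluation at $[\mu_i] = 1$ is $\sum_{k\geq 1}(-1)^{k+1}(n-2)^k/k$, which diverges for $n > 3$. You assert that monotonicity controls convergence, but monotonicity only bounds areas (hence $q$-valuations) from below; it does not bound the size or number of coefficients at each level, and in the examples above the coefficients grow without bound. So $r_i := \epsilon(\varphi(\cc_i))$ is simply not defined.

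The repair is precisely what the paper does: do not sum the series, pass to the leading term of the energy filtration instead. Because $\varphi(\cc_i)$ has strictly positive $q$-valuation (every contribution involves a positive-area disk in $L$ or a bounding-cochain insertion, both of which raise the filtration), the leading term of $\exp(\varphi(\cc_i))$ is $1$. The leading-order part of $\varphi(\delta^{\ab}(\aa)) = 0$ is therefore the \emph{finite} group-ring element $1 + [\nu_1] + \ldots + [\nu_{n-1}] \in \C[H_1(L)]$, where $[\nu_i]$ is the image of $[\mu_i]$. Since the group elements form a basis and every coefficient here is $+1$, this element is nonzero---the positivity you wanted, but applied to a finite sum where it is unproblematic. (If you still wish to use your evaluation map $\epsilon$, it is perfectly legitimate to apply it to this finite leading-order term, giving $n \neq 0$.) In short: your positivity idea is the correct one, but it must be run through the associated graded of the energy filtration rather than evaluated on the completion directly.
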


    \begin{proof}
        Any geometric augmentation $\varphi(L,b): CE(\Lambda) \to \hat{G}(L)$
        must satisfy the relation coming from the differential of the minimal
        length Reeb chord
\begin{equation} \label{diffreeb2} \varphi(L,b) (W_\Lambda( [\mu_1 ]\exp(\cc_1),\ldots, [\mu_k] (\exp(\cc_k)))  )  = 0 \end{equation}
using II-\eqref{II-diffreeb}. On the other hand, the minimal area terms in  $(\Rep(p))_* W $ with respect to the filtration in II-\eqref{II-filt} all have positive coefficient one. Indeed, the areas of the corresponding disks in $Y$ are equal, by monotonicity. Thus, contributions to the images under the augmentation of the  classical generators 
\[ \varphi(\cc_i), i = 1,\ldots, n-1 \] 
must involve either positive area disks in $L$ or  insertions from the bounding chain $b$. 
        The latter contributions lie in a non-trivial subspace in the energy filtration of $CE(\Lambda)$ by assumption on the positive $q$-valuation of the bounding chain $b$.  This contradicts \eqref{diffreeb2}.
            \end{proof}

\section{Toric examples}

In this section, we make various computations and in particular, prove our main result  Theorem \ref{augpot}. 

\begin{example} \label{hl3}
We first continue the study of the Harvey-Lawson Legendrian in Lemma II-\ref{II-lem:hl2}. The perturbed Clifford Legendrian $\Lambda_\eps$ is Legendrian 
for a contact structure $\alpha_\eps$ that is a perturbation of the standard contact structure, and fibers over a non-monotone Lagrangian torus orbit
in $\CP^{n-1}$.   By Gray stability,  the contact manifold $(Z,\alpha)$ is contactomorphic 
to $(Z,\alpha_\eps)$.  The Legendrian dga's
$CF(\Lambda_0)$ and $CF(\Lambda_\eps)$ are therefore chain homotopic, 
and may be taken to be equal.  
The Harvey-Lawson filling $L_{(1)}$ defines an augmentation
\[ \varphi: CE(\Lambda_\eps) \to  G(\varphi):= G(L_{(1)})   \] 
given by (with signs depending on a suitable choice of spin structure)
\[ \begin{array}{ll} 
\varphi([\mu_i]) = [\mu_i], \   i < n-1 &
 \varphi([\mu_{n-1}]) = 1    \\
 \varphi(\cc_i) = 0, \ 
i < n-1 &  \varphi(\cc_{n-1}) = \ln( 1 +
\pm [\mu_1 ] + \ldots +
\pm [\mu_{n-2}]) .  
\end{array}\]
We obtain an augmentation 
\[ \varphi: CE(\Lambda_{\Cliff}) \to G (\varphi) \]
with the property that 
for any 
polynomial $f(y_1,\ldots, y_{n-2})$ , 
the image is 
\[ \varphi(f(y_1,\ldots, y_{n-2})) = f([\mu_1],\ldots, [\mu_{n-2}]) . \]
It follows that 
\begin{equation} \label{include} \Aug(\Lambda) \subset \left\{ 1 - \sum_{i=1}^{n-1}
  \pm y_i = 0 \right\} \end{equation}
  is a hypersurface.   Since this hypersurface is irreducible, 
the inclusion in \eqref{include} is an equality.   This ends
the Example. 
\end{example}

We describe the augmentation variety of a toric Legendrian, as claimed in Theorem \ref{augpot} from the introduction. 

\begin{theorem} \label{geompotthm}
Let $Z$ be a negative circle bundle over a Fano  toric variety $Y$ with 
minimal Chern number at least two  and so that the variety 
$ W_\Lambda^{-1}( 0)   $ is reduced.   
If the spin structure  extends over the filling constructed in Theorem 
II-\ref{II-hltoricfano}, then the augmentation variety is equal to the geometric
augmentation variety.
\end{theorem}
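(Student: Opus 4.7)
The containment $\Aug_{\on{geom}}(\Lambda) \subset \Aug(\Lambda)$ is immediate from the definitions, so the plan is to establish the reverse inclusion. By Theorem \ref{algpotthm} we have $\Aug(\Lambda) = W_\Lambda^{-1}(0)$, and the reducedness hypothesis lets us write $W_\Lambda = W_{\Lambda,1} \cdots W_{\Lambda,l}$ as a product of distinct irreducible factors. It therefore suffices to exhibit, for each factor $W_{\Lambda,i}$, a tamed filling $(L_i,b_i)$ whose geometric augmentation ideal in $\C[\Rep(\Lambda)]$ equals $\langle W_{\Lambda,i}\rangle$.

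I would invoke the toric Harvey-Lawson filling $L$ constructed in Theorem II-\ref{II-hltoricfano}; the hypothesis that the spin structure extends over $L$ guarantees orientability of the associated moduli spaces and hence produces a geometric augmentation $\varphi(L,b)\colon CE(\Lambda)\to \hat G(L)$ for each admissible bounding chain $b$. The first task is to compute $\varphi(L,b)$ on the Morse generators $\cc_1,\ldots,\cc_k$ following the template of Lemma \ref{morseaug} and Example \ref{hl3}: after picking a basis of $H_1(\Lambda)$ adapted to the facet swept by $L$, a moduli count shows that $\varphi(\cc_i) = 0$ for the cycles transverse to the remaining facets, while the remaining $\varphi(\cc_j)$ is a logarithm capturing the other Maslov-two disk contributions. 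Since $\varphi(L,b)$ is a chain map and $\delta^{\ab}(\aa)$ coincides with $W_\Lambda$ up to the identifications of II-\eqref{II-diffreeb}, automatically $\varphi(L,b)(W_\Lambda) = 0$, so some irreducible factor $W_{\Lambda,i}$ lies in $\ker\varphi(L,b)$; the choice of facet and bounding chain selects which factor is realized.

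Given those explicit formulas I would then check, as in Lemma \ref{projaug}, that the induced map $\C[\Rep(\Lambda)]/\langle W_{\Lambda,i}\rangle \to \hat G(L)$ is injective, so that $\cI_{(L,b)}(\Lambda)\cap\C[\Rep(\Lambda)] = \langle W_{\Lambda,i}\rangle$ and $\Aug_{(L,b)}(\Lambda) = \{W_{\Lambda,i}=0\}$. Running this construction over all facets of the moment polytope and over all admissible bounding chains, one obtains $\Aug_{\on{geom}}(\Lambda) \supseteq \{W_\Lambda=0\} = \Aug(\Lambda)$, completing the argument.

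The principal obstacle is the second paragraph: verifying the Maslov-two disk count for the general toric filling and, crucially, showing that every irreducible factor of $W_\Lambda$ really is attained as one varies the facet and bounding chain. In the $\CP^{n-1}$ case of Example \ref{hl3} and Proposition \ref{augcliff} the classical disks can be enumerated by hand, but in the general toric case one must combine the compactness and gluing results of the previous two papers with the standard description of Maslov-two disks on toric orbits, tracking signs via the hypothesis that the spin structure extends over $L$. The reducedness of $W_\Lambda^{-1}(0)$ is used precisely to identify the radicals of the augmentation ideals with the ideals themselves, so that the scheme-theoretic computation matches the set-theoretic statement.
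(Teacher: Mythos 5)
Your core mechanism is the same as the paper's: produce the augmentation $\varphi(L,0)$ of the toric Harvey--Lawson filling from Theorem II-\ref{II-hltoricfano}, compute it on the Morse generators by a disk/trajectory count so that $\varphi(y_i)=[\mu_i]$ for $i=1,\ldots,n-2$, and then argue as in Lemma \ref{projaug} that no polynomial in $y_1,\ldots,y_{n-2}$ vanishes on $\Aug_{(L,0)}(\Lambda)$, so the geometric locus is a hypersurface sitting inside $W_\Lambda^{-1}(0)$. Where you diverge is in how the containment is closed. The paper's proof opens by observing that for a Fano toric potential the augmentation variety is an \emph{irreducible} hypersurface, so a single filling whose locus is a hypersurface forces equality; reducedness is then only used to match the set-theoretic with the ideal-theoretic statement. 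You instead allow $W_\Lambda$ to factor and propose to realize each irreducible factor by varying the facet and the bounding chain.

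That last step is a genuine gap, and you have correctly identified it yourself as the ``principal obstacle'': nothing in your plan shows that every irreducible factor of $W_\Lambda$ is attained by some choice of facet and bounding chain, nor even which factor a given filling kills (knowing $\varphi(L,b)(W_\Lambda)=0$ and that $\hat G(L)$ is a domain only tells you \emph{some} factor dies). The paper does not need this because irreducibility makes the question moot. If you want to keep your more general route you would have to either prove the irreducibility of the toric potential's zero locus (reducing to the paper's argument) or supply an explicit correspondence between fillings/bounding chains and irreducible factors, which is not available from the cited results. As written, your proposal proves the theorem only in the case $l=1$.
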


\begin{proof}   Since the augmentation variety 
is an irreducible hypersurface, it suffices to show that the locus defined by the given filling is also a hypersurface.  Let $\varphi{(L,0)}$ be the augmentation for the filling constructed in Theorem II-\ref{II-hltoricfano}.   We have 
\[ \varphi( y_i )  = [\mu_i], \quad i = 1,\ldots,n-2 .\]
Indeed, as in the proof of Theorem II-\ref{II-hltoricfano}, the holomorphic disks have boundary
contained in the locus 
\[\{ z_{n-2} = z_{n-1} = 0  \} \cap L \cong  T^{n-2}  .\] 
For the standard metric and Morse
function, the set of points connecting to $\cc_k$ by an infinite trajectory
$u_e: T_e \to L$ is  the  dimension two cycle obtained as the closure 
\[ \ol{A}_k \subset L, \quad
 A_k = \{ (1, \ldots, e^{i \theta}, 1, \ldots, 1 )  \} \times \R_{> 0 } .\] 
Here we hae identified 
\[ T^{n-1} \times \R_{> 0}
\subset T^{n-2} \times \R^2 \]  
by the map $e^{i \theta},r \mapsto r e^{i \theta} $
on the last two factors and the identity map on $T^{n-2}$.
For generic perturbations of these cycles, the holomorphic disks are disjoint from  $\ol{A}_k$
unless $k = n-2.$ 

We claim that the projection of the augmentation variety onto the torus corresponding to the first $n-2$ coordinates
is surjective.   Since 
\[ \varphi(y_i) = [\mu_k], \quad k = 1,\ldots, n-2 \] 
each polynomial in $y_1,\ldots,  y_{n-2}$ is mapped by $\varphi$ to the corresponding polynomial in 
$[\mu_1],\ldots, [\mu_{n-1}]$.  So there are no polynomials in 
$y_1,\ldots, 
y_{n-2}$  vanishing on $\Aug(\Lambda)$.  Since the polynomial  in \eqref{augconstraint} vanishes on $\Aug(\Lambda)$ and is reduced, equality holds.
\end{proof}

\section{Exact augmentations}

In this section we investigate augmentations associated to 
exact fillings and several examples. 

\begin{proposition}  Let $(Z,\Lambda)$ be a tame pair.  Any exact filling $(X,L)$ with trivial bounding 
chain, $b = 0$, defines an augmentation $\varphi(L,b)$ that maps
all Morse-degree-one generators $\cc \in \cI_\black(\Lambda)$
to zero. 
\end{proposition}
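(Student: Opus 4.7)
The strategy is to inspect the moduli spaces defining $\varphi(L,0)$ and argue that no rigid configuration can contribute to $\varphi(L,0)(\cc)$ when $\cc$ is a Morse-degree-one classical generator. Recall from the construction of the cobordism map that for such a $\cc \in \cI_\black(\Lambda)$, the value $\varphi(L,0)(\cc)$ is a signed count of rigid configurations consisting of pseudoholomorphic disks $u:(D,\partial D)\to (\ol X, L)$ decorated by Morse flow trees, with a single incoming trajectory of the Morse function $f_\black$ limiting to $\cc$, and weighted by the boundary homology class in $\hat G(L) = \C[H_1(L)]$.

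First I would use exactness of $L$ to rule out non-constant disks. Since $\omega|_L = d\lambda$ for some primitive $\lambda$, Stokes' theorem gives
\[
\int_D u^*\omega \;=\; \int_{\partial D} u^*\lambda \;=\; 0
\]
for every pseudoholomorphic disk $u:(D,\partial D)\to (\ol X,L)$, and positivity of area then forces $u$ to be constant. The assumption $b=0$ further removes all bounding-chain insertions along the boundary. Hence every configuration contributing to $\varphi(L,0)(\cc)$ consists of constant disks joined by Morse flow segments; each such disk has null-homologous boundary and contributes the trivial weight $1\in \hat G(L)$.

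The remaining step is a dimension count showing that no such configuration is rigid. Constant disks contribute no Maslov index and thus no Fredholm index to cut down dimensions, while the incoming Morse trajectory from a Morse-degree-one critical point $\cc$ sweeps out the positive-dimensional unstable manifold $W^u(\cc)\subset \Lambda$; without any non-constant disk to impose matching conditions, this free parameter cannot be absorbed. I expect the main bookkeeping obstacle to be organizing the argument uniformly over all tree topologies with possibly several constant-disk vertices and intermediate Morse edges, but in each case the expected dimension of the moduli space remains strictly positive, so the signed count of rigid representatives is zero and $\varphi(L,0)(\cc)=0$.
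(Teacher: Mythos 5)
Your first step coincides with the paper's: exactness plus Stokes rules out non-constant disks bounding $L$, and the assumption $b=0$ removes all bounding-chain insertions, so only configurations built from constant disks could contribute to $\varphi(L,0)(\cc)$. Where you diverge is in the final step. The paper does not run a dimension count at all: it simply observes that a constant disk appearing in a configuration contributing to $\varphi(L,0)(\cc)$ carries too few special points --- the single incoming boundary edge, and no interior markings, since a constant disk in $L$ misses the stabilizing Donaldson hypersurface --- so it violates the stability requirement of at least three special points, and such configurations do not exist in the moduli space in the first place. Your route instead asserts that the moduli of constant-disk configurations has strictly positive expected dimension in every tree topology, so the rigid count vanishes. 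That is plausible, but it rests on two points you do not establish: (i) that configurations containing ghost (constant) components are admissible points of the perturbed moduli space to which expected-dimension/transversality reasoning applies --- in the Cieliebak--Mohnke-type scheme used here they are excluded by definition, which is exactly the device that sidesteps the well-known transversality subtleties near ghost components; and (ii) the positivity of the expected dimension itself, which you assert rather than verify (one must track the two-dimensional automorphism group of a once-pointed disk against the codimension of the unstable-manifold constraint, uniformly over tree types). Invoking stability, as the paper does, makes all of this bookkeeping unnecessary and gives the cleaner argument; if you prefer your dimension-count route, you would need to carry out that index verification explicitly.
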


\begin{proof}  Since $L$ bounds  no non-constant holomorphic disks, 
$b = 0$ is a Maurer-Cartan solution.  Let $\cc \in \cI_\black(\Lambda)$
have Morse degree one.  The image $\varphi(L,b)(\cc)$ is a count of holomorphic disks in $L$ with no punctures.  Any such disk is necessarily 
constant, and the stability condition of at least three special points 
on any such disk implies that such configurations do not exist. 
Hence $\varphi(L,b)(\cc) = 0.$
\end{proof}

\begin{definition} Let $(Z,\Lambda)$ be a tame pair.  An augmentation $\varphi: CE(\Lambda) \to G(\varphi)$
is {\em exact}  if $\varphi(\cc) = 0$ for all classical generators
$\cc \in \cI_\black(\Gamma)$.
\end{definition}

\begin{example} \label{coveringex2} 
We consider the Legendrian lift of the product of equators
in the product of two-spheres.  That is, let $\Lambda \cong T^2$
denote the Legendrian 
in the unit hyperplane bundle  
\[ Z = S^3 \times_{S^1} S^3 \cong S^3 \times S^2 \] 
given by lifting the product  $\Pi = S^1 \times S^1$ in  $Y = \CP^1 \times \CP^1$.  
The disks  lifting the Maslov
index two disks in $Y$  give rise to the leading order terms in the
differential as in II-\eqref{II-deltaaa2}
\[ \delta^{\ab,0}(\aa) =
 1 -  y_1 -  y_2 + y_1 y_2    . \] 
 In this example, one sees that the augmentation variety 
 has two components defined by $y_1 - 1$ and $y_2 -1 $ respectively. 
 There is an obvious filling which produces these components:
 Write $Z$ as the unit sphere bundle in $T^* S^3$, and
 let $\Lambda$ be the unit conormal bundle of the unknot $S^1 \subset S^3$.
 Then the unit disk provides an exact filling.   Taking the two different
 ways of writing $Z$ as the unit sphere bundle gives the two 
 components of the augmentation variety, so that $\Aug_{\on{geom}}(\Lambda) = 
 \Aug(\Lambda)$.   On the other hand, one may also take $Z$ to be the unit anti-canonical bundle, 
in which case the projection from $\Lambda$ to $\Pi$ is an isomorphism obtains using the capping path  corresponding to the monomial $y_1$ 
for non-trivial spin structures on both factors is 
\[ \delta^{\ab,0}(\aa) = 1 - y_1^2 + y_1 y_2 - y_1/ y_2 . \]
The augmentation variety in this case is 
\[ \Aug(\Lambda) = \{ 1 - y_1^2 + y_1 y_2 - y_1/ y_2 = 0 \} \]
by Theorem \ref{augpot}.  

\vskip .1in \noindent {\em Claim:  There is no augmentation 
defined over a polynomial ring of the form
\begin{equation} \label{gl} \hat{G}(L) = \hat{G}(\Lambda)/ (a_1 [\mu_1] - a_2 [\mu_2]) 
\end{equation}
for some constants $a_1,a_2 \in \Z$ which maps both $\cc_1$ and $\cc_2$ to zero. }  Indeed, any such augmentation 
would map 
\[ W_\Lambda([\mu_1],[\mu_2]) := 1 - [\mu_1]^2 + [\mu_1] [\mu_2] - [\mu_1]/[\mu_2]  \] 
to zero.  This would imply that $W_\Lambda([\mu_1],[\mu_2])$ has a linear factor,
which contradicts irreducibility.

To show that $\Lambda$ has no exact filling, suppose that $L$ is a filling
of $\Lambda$. The inclusion $H_1(\Lambda) \to H_1(L)$ has one-dimensional 
kernel, since the image of $H^1(L) \to H^1(\Lambda)$ is Lagrangian (See 
\cite[Lemma 3.2.4]{fielda}.)  Hence $\hat{G}(L)$ is of the form \eqref{gl},
and the claim above shows that such an augmentation does not exist.
 \end{example}

\begin{example} \label{hopf5} 
We continue Example II-\ref{II-hopf4} regarding the exact filling $L_{(2)}$ of the 
disconnected Hopf Legendrian $\Lambda_{\Hopf} \subset S^{2n-1}$.
Depending on whether the path defining
the filling $L_{(2)}$ passes below or above the critical value in the Lefschetz fibration, 
the augmentation $\varphi(\cc_{12})$ resp. $\varphi(\cc_{21})$ has either $1$ or $n$ terms. 
\end{example}

\section{Fillings and partitions}

We compute some examples of augmentation varieties in the case
that the Legendrian is disconnected.  A conjecture
of Aganagic-Ekholm-Ng-Vafa \cite{vafaetal} describes the augmentation varieties
of Legendrians associated to links in terms of certain partitions; here we obtain a simplified
version of this conjecture, albeit in arbitrary dimension, for unions of translations of Legendrian lifts
of Lagrangian tori in toric varieties.    

We begin with the example of the {\em Hopf Legendrian}
from \cite[Equation \eqref{II-hopf}]{BCSW2} which is a disjoint of union of two copies of the Clifford Legendrian, related
by a translation by a small angle using the circle action.

\begin{lemma}
Suppose $\Lambda \cong T^{n-1} \sqcup T^{n-1} \subset S^{2n-2}$ is the Hopf Legendrian in II-\eqref{II-hopf}.  If the spin structures on the two components are identical (under the isomorphism provided by translation)
then the augmentation variety $\Aug(\Lambda)$  is a union of two irreducible components  
\begin{multline} \label{ghopf}
\Aug(\Lambda) = \{ \pm  1 \pm  y_{1,b} \pm \ldots \pm y_{n-1,b} = 0, \quad b =
1,2 \} \\ \cup \{ y_{i,1} = y_{i,2}, \quad i = 1,\ldots,
n-1 \} . \end{multline}
If the spin structures on both components of $\Lambda$ are isomorphic via the identification of components by translation and 
non-trivial, then the geometric augmentation variety is equal 
to the algebraic augmentation variety:
\[ \Aug_{\on{geom}}(\Lambda) = \Aug(\Lambda) . \]
Otherwise, if the spin structure is trivial, then the geometric
augmentation variety is empty.   

The lch spectrum $\Aug_\white(\Lambda)$ from \eqref{spectrum} is 
\begin{multline} \Aug_\white(\Lambda) = 
\{ 1 - [\mu_{1,1}] - \ldots - [\mu_{n-1,1}]   + \aa_{12}  \aa_{21} = 
1 - [\mu_{1,2}] - \ldots - [\mu_{n-1,2}]  + \aa_{21} \aa_{12} = 0 \\  
\aa_{12} ([\mu_{i,1}]  - [\mu_{i,2}])  =  0, \forall i, \quad 
\aa_{21} ([\mu_{i,1}] - [\mu_{i,2}]) = 0, \forall i
\} 
\subset \C^{2n} .\end{multline}
\end{lemma}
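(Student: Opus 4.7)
The plan is to read off the algebraic augmentation ideal from the Chekanov-Eliashberg differential applied to a minimal set of low-degree generators of $CE(\Lambda)$, and then realize each resulting component either by a formal Maurer-Cartan construction (for the algebraic statement) or by an explicit filling (for the geometric statement). The critical generators are, for each component $b = 1, 2$: the Morse classical generators $\cc_{i,b}$ together with the Clifford Reeb chord $\aa_b$ of Morse degree one; plus two mixed degree-zero Reeb chords $\aa_{12}, \aa_{21}$ and mixed Morse-degree-one generators $\bb_{12}, \bb_{21}$ arising from the short Reeb chords that connect the small-angle translated pair. Following Example II-\ref{II-hopf4} and the lch spectrum formula, the relevant differentials take the form
\[
\delta \aa_b = \pm 1 \pm y_{1,b} \pm \cdots \pm y_{n,b} + \aa_{12}\aa_{21}, \qquad \delta \bb_{ij} = \aa_{ij}\bigl([\mu_{i,1}] - [\mu_{i,2}]\bigr) \quad \text{modulo } \widetilde{I}^{\ab},
\]
where the mixed quadratic correction in $\delta \aa_b$ records the single short strip bounding both components.

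For the containment $\Aug(\Lambda) \subseteq$ the right-hand side of \eqref{ghopf}, I would argue by dichotomy on $\varphi(\aa_{12}\aa_{21})$. If this product vanishes, $\varphi(\delta\aa_b) = 0$ reduces to the two Clifford polynomial equations, placing the point in the first piece. If both $\varphi(\aa_{ij})$ are nonzero, the relations $\varphi(\delta\bb_{ij}) = 0$ force $\varphi([\mu_{i,1}]) = \varphi([\mu_{i,2}])$, and a parallel argument using the Morse-generator cobordism terms then upgrades this to $y_{i,1} = y_{i,2}$, giving the diagonal piece. The reverse containment is obtained by constructing explicit augmentations: for the first piece, apply the formal Maurer-Cartan iteration from the proof of Theorem \ref{algpotthm} to each Clifford factor independently with $\varphi(\aa_{12}) = \varphi(\aa_{21}) = 0$; for the diagonal, identify the coordinate systems and tune $\varphi(\aa_{12})\varphi(\aa_{21})$ to absorb the residual Clifford polynomial.

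For the geometric claims under identical non-trivial spin structures, the three components arise from three fillings. The pair of Harvey-Lawson fillings, one for each translated Clifford component, realizes the codimension-two piece via Theorem II-\ref{II-hltoricfano} and Proposition \ref{augcliff}; while the exact cylindrical cobordism between the two translated copies, along the lines of Example \ref{hopf5}, realizes the diagonal by identifying the coordinate systems via the augmentation $\varphi(L,0)$. When the spin structure is trivial, the component-wise Harvey-Lawson augmentations fail by the sign/energy argument at the end of Subsection 2.2, and any hypothetical joining cobordism would inherit an incompatibility between the Clifford relations on its two boundary components; hence $\Aug_{\on{geom}}(\Lambda) = \emptyset$.

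Finally, the lch spectrum is computed directly as $\Spec$ of the abelianized Reeb-chord homology in degree zero, which is the quotient of $\C[[\mu_{i,b}]^{\pm}, \aa_{12}, \aa_{21}]$ by the ideal generated by $\delta^{\ab}\aa_b$ and $\delta^{\ab}\bb_{ij}$; this recovers the four listed relations. The main obstacle in the entire argument will be pinning down the mixed quadratic term $\aa_{12}\aa_{21}$ in $\delta\aa_b$: this count of short strips whose boundary winds once around each translated copy requires a careful transversality and sign analysis to exclude higher-order corrections. A compactness argument, together with the small-angle control on the Reeb chord lengths, will reduce the moduli space to the single configuration class contributing the claimed correction.
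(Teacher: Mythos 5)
Your proposal follows essentially the same route as the paper: a dichotomy on the mixed Reeb chords $\aa_{12},\aa_{21}$ to get the containment in the two components, explicit augmentations (Maurer--Cartan/formal expansion with $\varphi(\aa_{12})=\varphi(\aa_{21})=0$, resp.\ tuning the product $\varphi(\aa_{12})\varphi(\aa_{21})$) for the reverse containment, the union of translated Harvey--Lawson fillings and the exact surgery filling $L_{(2)}$ for the geometric statement, and a direct reading of the degree-one differentials for the lch spectrum. The only cosmetic difference is that you run the dichotomy on the product $\varphi(\aa_{12}\aa_{21})$ rather than on the individual factors, and you correctly flag that the quadratic term $\aa_{12}\aa_{21}$ in $\delta\aa_b$ is the input that must be supplied (the paper imports it from Part II).
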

\begin{proof}
  We break down the possibilities for augmentations based on their values on the Reeb chords
  connecting the two components of the Legendrian. 
  With notation from Example II-\ref{II-hopf3}, suppose that an augmentation $\varphi$
satisfies either 
\begin{equation} \label{first}
\varphi(\aa_{12}) = 0 \quad \text{or} \quad \varphi(\aa_{21}) = 0 .\end{equation}
Then
\[ \varphi^{\ab}(1 \pm y_{1,b}  \pm \ldots \pm y_{n,b} ) = 0,
\quad b = 1,2 . \]
In the non-vanishing case that \eqref{first} does not hold, 
\[ \varphi(\cc_{12}) = \varphi(\cc_{21}) = 0  \] 
using II-\eqref{II-deltacc2}.  Hence
\[ 
\varphi^{\ab}( 1 - y_{1,k} y_{2,k}^{-1}) =
 0 , \quad k = 1,\ldots, n . \]
Thus, we see that the augmentation variety $\Aug(\Lambda)$ is contained
in the union of two irreducible components in the statement of the Lemma.
 On the other hand, we may define an augmentation over $G(\varphi) = \hat{G}(\Lambda)$ 
 by setting 
\[ \varphi(\cc_{i,1}) = \varphi(\cc_{i,2}) = 0 , \quad i = 1,\ldots, n - 1 \]
and choose 
\[ \varphi(\aa_{12}),  \varphi(\aa_{21}) \]
 so that 
 \[ 1 \pm [\mu_{1,k}] e^{\varphi(\cc_{1,k}) } \pm 
 \ldots \pm [\mu_{n-1,k}] e^{\varphi(\cc_{n-1,k})}  + \varphi(\aa_{12}) \varphi(\aa_{21})  = 0  . \]
 Then $\varphi$ defines an augmentation.

 Suppose the spin structures on the two connected components are non-trivial and isomorphic; we will show that the components of the augmentation variety are geometrically realizable.    The first component in \eqref{ghopf} is geometrically realized by the union of the two Harvey-Lawson fillings as in the proof of Theorem \ref{geompotthm}.  We claim that the second component is that of the filling constructed in II-\eqref{II-L2}.  Since the filling is exact, the
  augmentation vanishes on the generators corresponding to the critical points of $f_L$:
\[  \varphi(\cc_i^a) = 0, \quad a \in \{1,2 \} .\]
  On the other hand, the map $H_1(\Lambda) \to H_1(L)$ identifies
  $\mu_i^1$ with $\mu_i^2$ for each $i$ and so
\[  \varphi( [\mu_i^1] ) = \varphi( [\mu_i^1] \exp(\cc_{i,1}) ) = 
 \varphi( [\mu_i^2] ) = \varphi( [\mu_i^2] \exp(\cc_{i,2}) ) . \] 
The augmentation component corresponding to this filling  lies in the diagonal component
of the augmentation variety is therefore
\[ \Aug(\Lambda)_L =  \{ (y_1,y_2) \in \cR(\Lambda_1) \times
\cR(\Lambda_2), \quad y_1 = y_2 \} . \]
The lch spectrum is 
defined by the equations corresponding to the differentials
of the degree one generators.  These generators
are $\aa ,\cc_{1,1}, \ldots,  \cc_{n-1,1},\cc_{1,2},\ldots, \cc_{n-1,2}$
as in Lemma II-\ref{II-deltacc2},
where the differentials were computed.
It follows that the 
lch spectrum  is as stated. 
\end{proof}

The Lemma above gives an example of a Legendrian
where the augmentation variety and the lch spectrum disagree. 

 We compute the geometric augmentation variety of a Legendrian with three components 
and show that the augmentation variety has four irreducible components. 
Consider the Legendrian
\[ \Lambda = \Lambda_0 \cup \beta \Lambda_0 \cup \beta^2
\Lambda_0  \subset S^{2n-1} \] 
  where 
\[ \beta = \exp ( i \theta) \]  
is a root of unity with $\theta > 0$ small. 
For any partition $ \{ \{ i, j \}, \{ k \} \} $ of $\{ 1,2, 3 \}$
define 
Then
\[ \Aug(\Lambda)_{\{ \{ i, j \}, \{ k \} \} } = \{  y_{i,b} = y_{j,b}, \quad \forall b, 
\pm 1 \pm y_{k,1} \pm y_{k,2} = 0  \} \] 
with signs depending on the choice of relative spin structure.
On the other hand, define 
\[ \Aug(\Lambda)_{\{ \{  1 \}, \{ 2 \}, \{ 3 \} \}} = 
 \{  
\pm 1 \pm y_{k,1} \pm y_{k,2} = 0 , k  \in \{ 1, 2,  3 \}  .\} \] 

\begin{theorem} \label{threecomp} Let $\Lambda \subset S^{2n-1}$
denote the three-component Legendrian described above.  For any 
spin structure on $\Lambda$ so that the spin structures on each component are isomorphic via translation, the augmentation variety $\Aug(\Lambda)$
is the union 
\begin{multline} \Aug(\Lambda ) = 
\Aug(\Lambda)_{\{ \{  1 \}, \{ 2 \}, \{ 3 \} \}} \cup \Aug(\Lambda)_{\{ \{  1 , 2 \}, \{ 3 \} \}} \\
\cup \Aug(\Lambda)_{\{ \{  1 \}, \{ 2 , 3 \} \}} \cup \Aug(\Lambda)_{\{ \{  1 ,3 \}, \{ 2 \} \}} . \end{multline}
If the spin structure is non-trivial, then the augmentation variety $\Aug(\Lambda)$
is equal to the geometric augmentation variety $\Aug_{\on{geom}}(\Lambda)$, and each irreducible component corresponds to a filling. 
    \end{theorem}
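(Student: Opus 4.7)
I would follow the same template as the proof of the two-component Hopf case, adapted to three components via a more elaborate case analysis on the behavior of the inter-component Reeb chord augmentations. First, enumerate the relevant generators: for each ordered pair of distinct components $(a,b) \in \{1,2,3\}^2$ there is a Morse-degree-zero Reeb chord $\aa_{ab}$ and a degree-one classical chord $\cc_{ab}$, of length proportional to the angular gap between the two copies. Their differentials take the form given in II-\eqref{II-deltacc2} and II-\eqref{II-deltaaa2}, so $\delta(\cc_{ab})$ enforces merging equations of the form $(1 - y_{\ell,a} y_{\ell,b}^{-1}) \aa_{ab}$ and the minimal-length Reeb chord within each component produces a Clifford-type potential for that component's coordinates, corrected by cross-terms of the form $\aa_{ab}\aa_{ba}$.

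Containment of $\Aug(\Lambda)$ in the asserted union is then established by case analysis on which of the six inter-component values $\varphi(\aa_{ab})$ vanish. When $\varphi(\aa_{ab}) = \varphi(\aa_{ba}) = 0$ for every pair, each component independently satisfies the Clifford equation, giving the partition $\{\{1\},\{2\},\{3\}\}$. When the chords for the pair $(i,j)$ are both non-vanishing while some chord into component $k$ vanishes, the relations $y_{\ell,i} = y_{\ell,j}$ are forced for all $\ell$ and component $k$'s coordinates satisfy the Clifford equation independently, giving $\{\{i,j\},\{k\}\}$. The remaining case, where all three pairs of inter-component chords have non-vanishing augmentations, must be ruled out: pairwise merging of all three tori combined with the corrected Clifford equations from the three degree-one chords within components yields an overdetermined system with no solutions for non-trivial spin structures. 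With containment in hand, non-emptiness of each listed component is verified by explicitly constructing augmentations valued in a formal power series ring, as in Example \ref{algaug} and Theorem \ref{algpotthm}, solving iteratively for the images of the classical generators.

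For the geometric realization, I would produce a tamed filling for each partition. The trivial partition $\{\{1\},\{2\},\{3\}\}$ is realized by the disjoint union of three Harvey-Lawson fillings, whose augmentation component is computed by Proposition \ref{augcliff} and Theorem \ref{geompotthm}. A partition of the form $\{\{i,j\},\{k\}\}$ is realized by combining a Harvey-Lawson filling for $\Lambda_k$ with the exact cobordism filling of $\Lambda_i \sqcup \Lambda_j$ from Example \ref{hopf5} and II-\eqref{II-L2}, adapted to connect the $i$-th and $j$-th copies via a suitable path in the associated Lefschetz fibration. For each such filling, the exactness argument together with the inclusion $H_1(\Lambda_i) \oplus H_1(\Lambda_j) \hookrightarrow H_1(L_{\{i,j\}})$ identifying the homology classes of the merged pair ensures that the associated augmentation vanishes on the classical generators of the merged tori and cuts out precisely the stated irreducible component, exactly as in the Hopf argument.

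The main obstacle is the case analysis ruling out the fully-merged partition $\{\{1,2,3\}\}$ as an extra component. Concretely, this amounts to checking that the coupled system obtained by imposing all three merging relations together with the three corrected Clifford equations (each now carrying $\aa_{ab}\aa_{ba}$ corrections from two different pairs) admits no augmentation under the non-trivial spin structure assumption; the issue is that the three resulting Clifford equations, after substitution of the merging relations, become incompatible since their sign patterns are propagated through the translation by $\beta$. A secondary technical point is the careful tracking of relative spin structure signs when transporting the filling $L_{(2)}$ between different pairs of components among the three; this should follow from the contactomorphism invariance established in Theorem \ref{legisotopy} together with the sign analysis of Section I-\ref{I-s1spin}.
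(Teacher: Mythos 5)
Your overall template---case analysis on which of the inter-component Reeb chord augmentations $\varphi(\aa_{ab})$ vanish, explicit construction of augmentations realizing each partition, and geometric realization by combining Harvey--Lawson fillings with the exact filling of type $L_{(2)}$---is the same as the paper's. The setup of generators, the merging relations forced by $\delta(\cc_{ab})$, and the filling constructions in your last two paragraphs all match.

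However, there is a genuine gap in exactly the step you flag as the main obstacle: excluding the fully merged partition $\{\{1,2,3\}\}$. Your proposed mechanism---that imposing all three merging relations makes the three ``corrected Clifford equations'' an overdetermined, sign-incompatible system---does not work. After merging, those equations read $c + A_{ij} + A_{ik} = 0$ for $i=1,2,3$, where $c = \pm 1 \pm y_1 \pm y_2$ and $A_{jk} = \varphi(\aa_{jk})\varphi(\aa_{kj})$; since the spin structures are isomorphic via translation the sign pattern $c$ is the same in all three equations, and the system is solved by $A_{12}=A_{13}=A_{23}=-c/2$. Nothing is overdetermined, and no contradiction arises from these equations alone. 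Moreover, the decomposition of $\Aug(\Lambda)$ is asserted for \emph{any} translation-isomorphic spin structures, so an exclusion argument resting on non-triviality of the spin structure would leave the trivial case open. The obstruction the paper actually uses comes from a differential you did not list: the ``long'' inter-component chords $\aa_{13},\aa_{21},\aa_{32}$ (real degree $1/3$) satisfy $\delta(\aa_{ik}) = \aa_{ij}\otimes\aa_{jk}$ for each cyclic triple $(i,j,k)$, arising from covers of the constant disk, with no other outputs possible by the positivity-of-angle-change argument. Applying $\varphi$ gives $\varphi(\aa_{ij})\varphi(\aa_{jk})=0$ for each cyclic triple, so at most one pair of components can have both connecting chords non-vanishing; this is what forces partitions into blocks of size at most two with at most one block of size two, i.e.\ exactly the four listed components, for every spin structure. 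You need this relation (or an equivalent one) to close the containment direction.
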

    
Thus, in total there are four components
of the augmentation variety, corresponding to partitions of
$ \{ 1,2, 3 \}$ into subsets of size at most $2$.  

\begin{proof} We introduce notation for the generators of the Chekanov-Eliashberg algebra. Let 
\[ \Lambda_j = \exp ( j i \theta) \Lambda_{\Cliff} \] 
be the $j$-th sheet of $\Lambda$ and $[\mu_{b,j}], b = 1,2$ 
the standard basis for $H_1(\Lambda_j)$ and
\[ y_{j,b} = [\mu_{j,b}] \exp(\cc_{i,b})  \in CE(\Lambda), \quad b = 1,2, \  j = 1,2,3.\]  
Let $\aa_{jj}$ denote the Reeb chords of minimal length connection 
each sheet to itself, of real degree one, over the degree zero critical point in $\cR(\Lambda)$
and let  $\aa_{jk} $  be the Reeb chords for
$j < k$ connecting the $j$-th sheet to the $k$-th sheet.  The real degree 
is
\[  \deg_\R( \aa_{jk}) =  \frac{3}{\pi} (k - j)\frac{2\pi}{9}  - 1  = \frac{2}{3}(k - j) - 1. \] 
In particular, 
\[ \deg_\R(\aa_{12}) = \deg_\R(\aa_{23}) = \deg_\R( \aa_{31}) = -\frac{1}{3} \] 
while 
\[ \deg_\R(\aa_{13}) = \deg_\R(\aa_{21}) = \deg_\R( \aa_{32}) = \frac{1}{3} .\] 
The $\Z_2$ grading of all these elements would be $1$ mod $2$ so that 
\[ \deg_{\Z_2} (\aa_{ik}) = \deg_{\Z_2}(\aa_{ik})  = 1, \quad 
\deg_{\Z_2}(\aa_{ij}) =  \deg_{\Z_2}(\aa_{jk}) = 1  . \] 

We may compute the differential of some of the generators. 
As in II-\eqref{II-deltacc}, but now with an additional component, we have 
\[ \delta^{\ab,0}(\aa_{ii}) = \pm 1 \pm  y_{i,1} \pm  y_{i,2} + 
\sum_{k \neq i} \aa_{ik} \otimes \aa_{ki} .\]
On the other hand, for the real-degree $\frac{1}{3}$ generators we have  
\[ \delta^{\ab,0}(\aa_{ik}) = \aa_{ij} \otimes \aa_{jk} , \quad \forall (i, j
, k ) \in \{ (1,2,3), (2,3,1), (3,1,2) \} \]
arising from the cover of the constant disk in $\C P^2$, with no other
outputs possible since any outgoing collection of Reeb chords with
smaller total angle change for a punctured disk would force the
projection into $Y$ of the disk to be non-constant, in which case the
outgoing angle change would be negative.  Thus, we have for any
augmentation $\varphi$
\[ \varphi(\aa_{ij}) \otimes \varphi(\aa_{ji}) \neq 0 \quad 
\implies \quad \varphi(\cc_{ij}) = \varphi(\cc_{ji}) = 0  \] 
in which case $y_{i,k} = y_{j,k}$ for all $k$.  Also, since $\varphi$
is an augmentation,
\[ \varphi(\delta^{\ab,0}(\aa_{ik})) = \varphi(\aa_{ij}) \otimes \varphi(\aa_{jk})  = 0
\]
so either $\varphi(\aa_{ij})$ or $\varphi(\aa_{jk})$ vanishes.
Without loss of generality assume that $\varphi(\aa_{jk})$ vanishes.
Then 
\[ \varphi(  \pm 1 \pm  y_{i,1} \pm  y_{i,2} + \aa_{ij} \otimes \aa_{ji}) =0  .\]
If $\varphi(c_{ij})$ also vanishes then
\[  \pm 1 \pm  y_{k,1} \pm  \ldots \pm y_{k,2}  = 0 , k  \in \{ 1,2,3 \}  . \]
If $\varphi(c_{ij})$  is non-zero then the relation
\[ y_{b,i} = y_{b,j}, \quad b \in \{ 1,  2 \}  \]
must hold.  Putting everything together, $\Aug(\Lambda)$ is contained in the union of the four irreducible
components in Theorem \ref{threecomp}. 

On the other hand, we may construct augmentations as follows. 
For the partition $\{ \{ 1 \}, \{ 2 \}, \{ 3 \} \}$ we may choose
$\varphi(\cc_{j,b})$ so that 
\[  \varphi(\cc_{j,1}) = 0, \quad  \pm \varphi(\cc_{j,2})  = \ln(1 \pm [\mu_{j,1}]) \]
for all $j$, and 
\[ \varphi(\aa_{12}) = \varphi(\aa_{21}) = 0. \]
For the partition $\{ \{ 1, 2 \}, \{ 3 \} \}$, for any 
values $\varphi(\cc_{j,1}) = \varphi(\cc_{j,2}) $, for $b \in \{ 1,2  \}$, choose
$\varphi(\aa_{12}), \varphi(\aa_{21})$ so that 
\[ 1 \pm  \varphi(\cc_{1,1}) \pm \varphi(\cc_{1,2}) + \varphi(\aa_{12}) 
\varphi(\aa_{21}) = 0 . \] 
Choose $\varphi(\cc_{j,3})$ so that 
\[ 1 \pm \varphi(\cc_{3,1}) \pm \varphi(\cc_{3,2}) = 0\]
We claim that $\varphi$ vanishes on the image of $\delta$. Indeed, we have
taken the standard Morse function on $\Lambda$, so  for each Morse-degree-two generator $\bb \in \cI_\white(\Lambda)$,  the image $\delta(\bb)$ is the Morse differential, which vanishes, 
plus terms that have at least one skew-symmetry as in Proposition II-\ref{II-abelian}. Thus, $\varphi(\delta(\bb)) = 0.$  Similarly, if $\aa$ is a minimal length Reeb chord representing a degree-zero generator
in $\cI(\white)$ then the formula \eqref{aarel} implies that $\varphi(\delta(\aa)) = 0$.
Thus,$\varphi$ vanishes on $\on{im}(\delta)$ and defines an augmentation.

For non-trivial spin structures. one has a symplectic filling by taking the union of the filling of II-\eqref{II-L2} and a copy of the Harvey-Lawson filling.   One also has a filling
given by the union of three translates of the Harvey-Lawson filling.
The holomorphic disks were computed in Example \ref{hopf5}.
Taking these together one sees that each component in 
the statement of the Theorem \ref{threecomp} actually appears.
\end{proof} 

A similar computation holds for any union of translates of the Clifford
Legendrian:

  \begin{theorem} \label{partitions}
  For a disjoint union $\Lambda \cong T_{\Cliff}^{\sqcup \ell}$ of $ \ell$ translated copies of the Clifford Legendrian, the augmentation variety   $\Aug(\Lambda)$ is the union of irreducible components $\Aug_P(\Lambda)$
  indexed by partitions $P$ of $\{ 1, \ldots, \ell \}$ into subsets of size one or two so that the spin structures on the 
  components $\Lambda_j,\Lambda_k$ agree for each 
  pair $\{ j, k \} \in P$.    Each component $Aug_P(\Lambda)$ is the product of products of $\Rep(p)( W^{-1}_\Pi(0))$ for each singleton in the partition $P$, assuming irreducibility,
  with varieties of the form 
\[ \{  y_i^{a} = y_i^b \}  \]  
for each pair of integers $a,b$ such that $\{ a, b \}$ lies in the partition $P$.  If the spin structures in the singletons in $P$ are non-trivial, then 
$\Aug_P(\Lambda)$ corresponds to a geometric filling.  
\end{theorem}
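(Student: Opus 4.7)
The plan is to generalize the three-component argument of Theorem \ref{threecomp} to arbitrary $\ell$, with the partition structure emerging from the linking behavior of inter-component Reeb chords. First, I set up the generators: for each component $\Lambda_j \cong T_{\Cliff}$ the standard classical generators $\cc_{i,j}$ and the minimal self-chord $\aa_{jj}$ of real degree one; between distinct components $j \neq k$ shortest Reeb chords $\aa_{jk}$ of length approximately $|k-j|\theta$, whose real degree becomes small as $\theta \to 0$. The differential of each self-chord has the form
\[
\delta(\aa_{jj}) = \pm 1 \pm y_{1,j} \pm \cdots \pm y_{n-1,j} + \sum_{k \neq j} \aa_{jk} \aa_{kj}
\]
as in Example \ref{hopf5}, while the shortest inter-component chords satisfy $\delta(\aa_{ik}) = \sum_{m} \aa_{im} \aa_{mk}$, where $m$ ranges over intermediate indices; as in the proof of Theorem \ref{threecomp}, no other outputs are permitted since any non-constant disk projection to $Y$ would produce a negative net outgoing angle change.

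Next, for each augmentation $\varphi$ I introduce the \emph{linking graph} $G_\varphi$ on vertex set $\{1,\ldots,\ell\}$ whose edges are the pairs $\{j,k\}$ with $\varphi(\aa_{jk})\varphi(\aa_{kj}) \neq 0$. The self-chord relation and the argument of Example \ref{coveringex2} force $\varphi(\cc_{i,j}) = \varphi(\cc_{i,k})$, hence $y_{i,j} = y_{i,k}$ for all $i$, on each edge of $G_\varphi$; for each isolated vertex $j$, the same self-chord relation degenerates to $W_{\Lambda_j} = 0$ as in the singleton case. I then claim that $G_\varphi$ is a matching: if $\{i,j\}$ and $\{j,k\}$ were both edges, then the identifications $y_{*,i} = y_{*,j} = y_{*,k}$ together with the augmentation equation $\varphi\bigl(\delta(\aa_{ik})\bigr) = 0$ produce a non-trivial product $\varphi(\aa_{ij})\varphi(\aa_{jk})$ that, after subtracting the other summands in $\sum_m \aa_{im}\aa_{mk}$ which must vanish for the same reason, cannot be cancelled; the spin-compatibility hypothesis ensures the relevant $\pm 1$ coefficients on the triple chord terms do not conspire to make the product vanish identically. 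This yields the containment $\Aug(\Lambda) \subset \bigcup_P \Aug_P(\Lambda)$ where $P$ ranges over partitions into parts of size one or two with spin structures compatible on each pair.

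For the reverse containment, I construct, for each such $P$, an explicit augmentation: on each pair $\{a,b\} \in P$ I set $\varphi(\cc_{i,a}) = \varphi(\cc_{i,b})$ arbitrarily, and solve the self-chord equation for $\varphi(\aa_{ab}), \varphi(\aa_{ba})$ as a quadratic relation exactly as in the Hopf case; on each singleton I use the Clifford-type solution of Example \ref{algaug}. Vanishing on $\on{im}(\delta)$ for the remaining generators follows from the skew-symmetry of Lemma \ref{abelian3} and the construction, so $\varphi$ is a genuine augmentation and realizes $\Aug_P(\Lambda)$ as a hypersurface in its ambient subvariety. Irreducibility of each factor $\Rep(p)(W_\Pi^{-1}(0))$ for singletons is inherited from the Clifford case via Theorem \ref{algpotthm}, and the diagonal factors $\{y_i^a = y_i^b\}$ are linear, so each $\Aug_P(\Lambda)$ is irreducible.

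The geometric realization proceeds pair-by-pair: for each $\{a,b\} \in P$ I take the filling of II-\eqref{II-L2} applied to $\Lambda_a \cup \Lambda_b$, whose induced augmentation enforces precisely $y_{i,a} = y_{i,b}$ as in the Hopf analysis of Theorem \ref{threecomp}; for each singleton with non-trivial spin structure I take a translate of the Harvey-Lawson filling, producing the Clifford zero-locus factor. The disjoint union is a tamed filling whose augmentation is $\Aug_P(\Lambda)$. The main obstacle is the matching claim: unlike the three-component case, for general $\ell$ the differential $\delta(\aa_{ik})$ has multiple intermediate terms, so the contradiction requires simultaneously tracking all relations coming from $\delta(\aa_{im})$ for the various $m$ and using the spin-compatibility assumption to rule out a degenerate cancellation; I expect this to be the technical heart of the argument.
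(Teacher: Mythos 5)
Your plan follows exactly the route the paper intends: the paper gives no proof of Theorem \ref{partitions} beyond the remark that it is ``similar to Theorem \ref{threecomp},'' and your setup --- the self-chord relations $\delta(\aa_{jj}) = W_{\Lambda_j} + \sum_{k\neq j}\aa_{jk}\aa_{kj}$, the quadratic differentials of the inter-component chords, the linking graph, and the realization of each partition component by a disjoint union of $L_{(2)}$-type surgery fillings and Harvey--Lawson fillings --- is a faithful generalization of the three-component argument. The degree bookkeeping you use (only length-two words $\aa_{im}\aa_{mk}$ can appear in $\delta(\aa_{ik})$, coming from covers of the constant disk) is correct, and the construction of augmentations realizing each admissible partition is the same as in Theorem \ref{threecomp}.

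However, the step you flag at the end is a genuine gap, not merely a technicality, and it is precisely where the ``similar to Theorem \ref{threecomp}'' heuristic breaks down. For $\ell = 3$ the matching claim is immediate because $\delta(\aa_{ik})$ has a \emph{single} quadratic term, so $\varphi(\aa_{ij})\varphi(\aa_{jk}) = 0$ on the nose. For $\ell > 3$ the relation $\sum_m \varphi(\aa_{im})\varphi(\aa_{mk}) = 0$ permits cancellation among several intermediate terms, and the linking graph only records the products $\varphi(\aa_{jk})\varphi(\aa_{kj})$, not the individual chord values, so a path $i$--$j$--$k$ in the graph does not by itself isolate the term $\varphi(\aa_{ij})\varphi(\aa_{jk})$. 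Your appeal to spin-compatibility does not address this, since the issue is cancellation among values of $\varphi$ in the coefficient ring, not among signs of disk counts. A workable closing of the gap is an induction on the angle $(k-i)\theta$ of the chord $\aa_{ik}$: the shortest non-trivial relations ($k - i = 2$ in the cyclic order) each have a single quadratic term and force $\varphi(\aa_{j,j+1})\varphi(\aa_{j+1,j+2}) = 0$ for all $j$, and one then propagates vanishing of the individual chord values (not just the products) up the angle filtration before assembling the matching claim. As written, your proposal establishes the containment of $\Aug(\Lambda)$ in the union over partitions only for $\ell \le 3$. A second, smaller gap: for a pair $\{a,b\}\in P$ that is not adjacent in the cyclic order of translates, the filling II-\eqref{II-L2} as defined joins adjacent copies, so you need to either exhibit the analogous vanishing-cycle filling along a path joining non-adjacent critical values or reduce to the adjacent case by a Legendrian isotopy; the paper glosses over this as well.
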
 

We leave the proof to the reader, as it is similar to Theorem \ref{threecomp}.    Note
that the paper \cite{kps:pre} gives an interpretation of linearized contact
homology (viewed as a ``sheaf'' over the augmentation variety) in terms of the base data. 

\section{Ruling out exact fillings}

In this section, we use the moduli spaces of buildings to rule out the existence of 
exact fillings of the Clifford Legendrian.   We sketch the argument:  Each contribution to the differential of the Chekanov-Eliashberg algebra is a lift of a Maslov index two disk, which may be viewed as a boundary point of some component the moduli space of holomorphic maps bounding the filling.   Each such component may be viewed as a relation of cycles in the filling, and we show that there are too many such relations for the cohomology of the 
filling to restrict to an isotropic subspace of the boundary. 

\begin{theorem} \label{nofill} (Dimitroglou-Rizell \cite{dr:few} and
  Treumann-Zaslow \cite{treumann:cubic} for $n=3$) For $n >2$ the
  Clifford Legendrian $T^{n-1} \subset S^{2n-1}$ has no exact Lagrangian filling.
\end{theorem}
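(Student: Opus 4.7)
The plan is to combine the algebraic vanishing of an exact augmentation on Morse generators with a topological bound on the cohomology of the filling. Suppose for contradiction that $L$ is an exact Lagrangian filling of $\Lambda_{\Cliff}\cong T^{n-1}\subset S^{2n-1}$ for some $n>2$. Since $L$ is exact, $b=0$ satisfies the Maurer--Cartan equation, and by the proposition on exact augmentations above, the associated augmentation $\varphi:=\varphi(L,0)\colon CE(\Lambda)\to\hat{G}(L)$ is exact: $\varphi(\cc_i)=0$ for every $\cc_i\in\cI_\black(\Lambda)$, so $\varphi(y_i)=[\mu_i]\in\hat{G}(L)=\C[H_1(L;\Z)_{\on{free}}]^{\wedge}$, where $[\mu_i]$ is the image under $\iota_\ast\colon H_1(\Lambda)\to H_1(L)$ of the basis cycle $\mu_i\in H_1(\Lambda)$. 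Applying $\varphi$ to the differential of the minimal-length Reeb chord $\aa$ and using the Clifford augmentation polynomial recorded in Example~\ref{hl3} yields
\[
\epsilon_0+\sum_{i=1}^{n-1}\epsilon_i\,[\mu_i]=0\quad\text{in }\hat{G}(L),
\]
with signs $\epsilon_i\in\{\pm1\}$ prescribed by the spin structure.

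Since $\hat{G}(L)$ is a Laurent polynomial ring over $\C$, distinct group elements are $\C$-linearly independent, so the above identity forces the $n$ monomials $\{1,[\mu_1],\ldots,[\mu_{n-1}]\}$ to collapse under $\iota_\ast$ into equivalence classes on each of which the signs sum to zero. For the trivial spin structure all $\epsilon_i$ agree and no such cancellation is possible, immediately precluding an exact filling. For a nontrivial spin structure, a short combinatorial argument shows that each equivalence class must have even size with equal numbers of positive and negative signs; consequently $n$ must be even, and the rank of the image $\iota_\ast(H_1(\Lambda))\subset H_1(L)$ is bounded above by $n/2-1$.

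For the contradiction I would appeal to the topological input hinted at in the remark before the theorem: that the cohomological restriction map from the filling to its boundary must land in a maximally isotropic subspace of $H^\ast(\Lambda;\R)$ for the Poincar\'e intersection pairing. Concretely, Poincar\'e--Lefschetz duality applied to the pair $(L,\Lambda)$ together with the half-dimensional property of a Lagrangian filling forces $\iota^\ast\colon H^\ast(L;\R)\to H^\ast(\Lambda;\R)\cong\Lambda^\ast\R^{n-1}$ to have image a graded subring of total dimension $2^{n-2}$; identifying this subring as exterior on its degree-one part yields $\rank\iota^\ast|_{H^1}=n-2$, which violates the upper bound $n/2-1$ from the previous paragraph precisely when $n>2$. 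The main technical obstacle I anticipate is this last identification---verifying that for an exact filling of $T^{n-1}$ the Lagrangian image is forced to be an exterior subalgebra on its degree-one part---which is the precise sense in which the cohomological image of the filling's disk moduli is ``too large to be maximally isotropic.''
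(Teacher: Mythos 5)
Your first half is sound and matches the paper's Lemma \ref{onerelations}: applying the exact augmentation to $\delta^{\ab}(\aa)$ and using linear independence of distinct group elements in the group ring does yield that $n$ must be even and that $\iota_*\mu_1,\dots,\iota_*\mu_{n-1}$ take at most $n/2-1$ distinct nonzero values, hence $\operatorname{rank}\,\iota_*(H_1(\Lambda;\R))\le n/2-1$. (Two caveats: the paper derives these relations by matching ends of one-dimensional moduli spaces of buildings precisely so that the argument survives when the filling is \emph{not} relatively spin, a case your sign-based argument does not cover; and for $n$ odd the parity contradiction already finishes the proof, which you should state explicitly.)

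The genuine gap is in your final step. A maximally isotropic graded subring of $H^\bullet(T^{n-1};\R)$ of total dimension $2^{n-2}$ need \emph{not} be the exterior algebra on its degree-one part, so the claimed forcing $\operatorname{rank}\,\iota^*|_{H^1}=n-2$ fails. For example, with $n=4$ the subspace $\on{span}\{1,\,e_1,\,e_1\wedge e_2,\,e_1\wedge e_3\}\subset H^\bullet(T^3;\R)$ is a $4$-dimensional isotropic subring whose degree-one part has rank $1$, not $n-2=2$. More fundamentally, the ``half lives, half dies'' constraint only bounds the image of $H^1(L)\oplus H^{n-2}(L)\to H^1(\Lambda)\oplus H^{n-2}(\Lambda)$ by $n-1$; it places no a priori lower bound on the $H^1$ summand alone, so the inequality $\operatorname{rank}\,\iota_*H_1\le n/2-1$ is not by itself contradictory. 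To close the argument one must also produce roughly $n/2$ independent classes in $H_{n-2}(\Lambda;\Z_2)$ killed by $\iota_*$, so that the total kernel in the two complementary degrees exceeds $n-1$ and violates isotropy. This is exactly what the paper's proof supplies, via the constrained moduli spaces $\M_1(L,A,i)$ with $A$ a Pontrjagin product of $n-3$ of the basis circles: their boundary pseudocycles give the relations $\check{\mu}_i+\check{\mu}_{i+1}=0$ or $\check{\mu}_i=0$ in $H_{n-2}(L;\Z_2)$. That geometric input has no counterpart in your proposal and cannot be replaced by the algebraic structure of the isotropic subring alone.
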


\begin{proof}[Proof for $n$ odd]  In odd dimensions, the proof of the Theorem follows from the fact that the number of boundary components of the one-dimensional moduli space is necessarily even, which contradicts the fact that there are an odd number of Maslov index two disks bounding the projection.   In more detail, let %
\[ \iota_*:H_1(\Lambda) \to H_1(L) \] 
  denote the inclusion map.  Suppose that $L$ is an exact 
  relative spin filling.   By the results of the previous sections, $L$ defines an augmentation
  \[ {\varphi}: CF(\Lambda) \to \hat{G}(L) . \] 
  Necessarily 
  $\varphi$ vanishes on the image of the differential and so 
 \begin{eqnarray} \label{varphizero}
   0 =    {\varphi}(\delta^{\ab}(\aa)) &=& {\varphi}( \pm 1 \pm y_1
                                \pm \ldots \pm  y_n) \\
                            &=& \pm 1 \pm {\iota_* [\mu_1]} \pm \ldots \pm
                                {\iota_* [\mu_{n}]}.
\end{eqnarray}
Here $\varphi$ is defined over the group ring $G(L)$; integrating the identity \eqref{varphizero} over $L$ gives 
\[ 0 = \pm 1 \pm 1  \pm \ldots \pm 1 \]
with an odd number of terms on the right-hand side.  Since zero is even, this is a contradiction.

We generalize the argument to the case that the filling is not necessarily relatively spin 
by examining the one-dimensional component in the  moduli space of buildings directly. The one-dimensional component of the moduli space of buildings $\M(L)_1$
contains $\M(\Lambda)_0$ as a collection of boundary components, where a building $u = (u_0,u_1)$ in $L$
is obtained from a building $u_1$ in $\Lambda$ by considering the first level  $u_0$ to be empty. 
Since the Lagrangian $\Pi \subset Y$ is monotone in this case and $L$ bounds no holomorphic disks, there are no boundary components of $\M(L)_1$ other than those arising from $\M(\Lambda)_0$:
\[ \partial \M(L)_1 = \M(\Lambda)_0. \]
As a result, the number of rigid buildings in $\M(\Lambda)_0$ asymptotic to a given incoming Reeb chord
$\gamma \in \cR(\Lambda)$ must be even.  Each rigid building in $\Lambda$ with Reeb chord $\aa$ correspond to a Maslov index two disk in $\CP^{n-1}$.
The number of such is equal to $n$ and so odd, which is a contradiction. 
\end{proof} 

In the case of even dimension we examine the moduli space of buildings
more closely.    We show relations in the homology arising from the moduli spaces of buildings in both degree one and codegree one. We begin with 
the degree one relations:

\begin{lemma} \label{onerelations}
Let $L$ be a compact, oriented exact filling of a compact Legendrian $\Lambda \cong T^{n-1}$, and $[\mu_1], \ldots, [\mu_{2n-1}] \in 
H_1(T^{n-1},\Z_2)$ the standard basis in first homology.
After re-ordering, the image of $H_1(\Lambda,\Z_2)$
in $H_1(L)$ is described by the relations
\[ \iota_* [\mu_1] = 0, \quad \iota_* [\mu_{2i} ] = \iota_* [\mu_{2i+1}], \quad i =
1,\ldots, \frac{1}{2}(n-1) . \]
\end{lemma}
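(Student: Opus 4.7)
The plan is to read off the stated relations from the single identity obtained by augmenting the differential of the minimal-length Reeb chord $\aa$, using exactness of $L$ to trivialize every classical Morse generator and to reduce the identity to a formal equation in the group ring $\Z[H_1(L)]$. First, since $L$ is exact the zero bounding chain is automatically Maurer--Cartan, and by the exact augmentation proposition just stated, the induced augmentation $\varphi := \varphi(L,0)$ vanishes on every classical degree-zero generator $\cc_i$. Consequently
\[ \varphi(y_i) \;=\; \varphi\bigl([\mu_i]\exp(\cc_i)\bigr) \;=\; \iota_*\mu_i \quad \text{in} \quad \hat G(L)\cong \Z[H_1(L)]. \]

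Next, I would apply $\varphi$ to the Clifford differential $\delta^{\ab,0}(\aa) = \pm 1 \pm y_1 \pm \cdots \pm y_n$ already recorded in the proof of Theorem \ref{nofill}. The condition $\varphi\circ\delta = 0$ then forces
\[ 0 \;=\; \pm 1 \pm \iota_*\mu_1 \pm \cdots \pm \iota_*\mu_n \quad \in \;\Z[H_1(L)]. \]
Regarded as a formal $\Z$-linear combination of elements of the abelian group $H_1(L)$, the identity requires that the signed coefficient at every $g \in H_1(L)$ vanishes. The constant $\pm 1$ sits at the identity element of $H_1(L)$ and has no companion among the other constants, so at least one $\iota_*\mu_j$ must equal $0$ with the opposite sign; after reindexing this is $\iota_*\mu_1 = 0$. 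At every nonzero $g \in H_1(L)$ that is hit, the remaining $\iota_*\mu_i$'s landing on $g$ must cancel in signed pairs, and after a further reindexing these pairings take the form $\iota_*\mu_{2i} = \iota_*\mu_{2i+1}$ for $i = 1,\ldots,\tfrac{1}{2}(n-1)$.

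The main obstacle I foresee is the combinatorial analysis of this cancellation. A priori several $\iota_*\mu_i$'s may land on the same element of $H_1(L)$, producing blocks of size larger than two. To force the structure claimed by the lemma one must use that each $y_i$ in $\delta^{\ab,0}(\aa)$ enters with a definite sign determined by the orientation and relative spin data on $\Lambda$, so two $\iota_*\mu_i$'s can cancel only when they land at the same group element with opposite signs; this rigidity restricts the possible partitions enough that a permutation of indices realizes the pair-plus-singleton pattern. The parity condition built into the enumeration $i = 1,\ldots,\tfrac{1}{2}(n-1)$ --- that is, $n$ odd --- is precisely the condition under which the $n+1$ signed terms split cleanly into one zero absorption together with $(n-1)/2$ pair cancellations, as recorded by the arithmetic $n+1 = 2 + 2\cdot\tfrac{1}{2}(n-1)$.
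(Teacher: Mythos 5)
The central gap is a hypothesis mismatch: your whole argument runs through the signed augmentation $\varphi(L,0)$ with values in $\hat{G}(L)\cong\Z[H_1(L)]$, and at the key combinatorial step you explicitly invoke ``definite signs determined by the orientation and relative spin data.'' But the Lemma assumes only a compact, \emph{oriented}, exact filling --- no relative spin structure --- and it is invoked precisely in the part of Theorem \ref{nofill} where one does not want to assume the filling is relatively spin (the paper introduces this Lemma as the way to ``generalize the argument to the case that the filling is not necessarily relatively spin''). The paper's own proof therefore bypasses the augmentation identity entirely: it considers the one-dimensional moduli space $\M(L)_1$ of buildings asymptotic to the minimal chord $\aa$, whose boundary points are the rigid symplectization buildings lifting the $n$ Maslov-two disks; each compact component has exactly two ends, the boundary class in $H_1(L)$ is locally constant along a component, so the rigid disks are matched in pairs with equal images under $\iota_*$, and the partner of the disk with trivial class yields $\iota_*\mu_1=0$ after reordering. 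Your algebraic route can be salvaged without spin by working in the mod-$2$ group ring $\Z_2[H_1(L)]$, where the counts defining $\varphi$ need no orientations, but as written the reliance on signs is a genuine gap, not a cosmetic one.

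The combinatorial step is also not correct as stated. Sign rigidity does not prevent several $\iota_*\mu_i$ from landing on the same element of $H_1(L)$ (two plus and two minus terms at the same nonzero class cancel perfectly well), so the mechanism you propose for forcing the ``singleton plus pairs'' pattern does not do the work. The conclusion survives for a simpler reason: within any block of terms landing on a common class the images already coincide, the block containing the constant term must contain an odd (hence nonzero) number of $\iota_*\mu_i$ equal to $0$, and every other block has even cardinality; choosing $\mu_1$ from the zero block and pairing the remaining indices block by block produces exactly the stated relations after relabeling. Finally, watch the term count: the differential of $\aa$ for the Clifford Legendrian has $n$ terms, namely $1,y_1,\dots,y_{n-1}$ (the ``$y_n$'' displayed in the proof of Theorem \ref{nofill} is an index slip), so with the correct count the parity bookkeeping comes out in the case $n$ even --- which is the case where the Lemma is actually applied --- rather than forcing $n$ odd as your closing arithmetic suggests; as you have set it up, the proof would be vacuous in the intended application.
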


\begin{proof}  The statement of the Lemma follows from a matching of the ends of the one-dimensional components of the moduli space of buildings bounding the filling.
Let $\M_i(\Lambda)_0$ denote the moduli space of rigid buildings in $\R \times Z$ of class $\iota_* \mu_{i}$.   Each building in $\M_i(\Lambda)_0$ represents the
boundary of a one-dimensional component $\M(L)_1$ of the moduli space  of
buildings $u: C \to \XX$  bounding the filling $L$.  Since each such component of $\M(L)$
has two ends and any one-manifold is oriented, without loss of generality $\M_{2i}(\Lambda)$ is connected to $\M_{2i+1}(\Lambda)$ by such a component for $i \ge 1$.  After re-ordering, 
we may assume that the components 
joined by the cobordism are those containing the 
$2k+1$-st and $2k+2$-st Maslov index two disks bounding $\Pi$, for each $k = 1,\ldots, n/2 -1 $.  After capping off using a path which is the reverse
of the lift of the first Maslov-index-two disk, the homology classes of these  disks are $0, [\mu_1], \ldots, [\mu_{n-1]}$. The statement of the Lemma follows.
\end{proof}

To obtain relations in higher degree homology groups, we study more general moduli spaces of buildings as follows.  We introduce the following moduli space with constraints in a submanifold of Reeb chords. 

\begin{definition}  For a tame pair $(Z,\Lambda)$ let
$ A \subset {\cR}(\Lambda) $ be a closed submanifold of the component
${\cR}(\Lambda)_{\min} \cong \Lambda$ of ${\cR}(\Lambda)$ consisting of
Reeb chords of angle change $2\pi /n$.  Let
\[ \ol{\M}(L,A) = \{ u: S \to \XX, \ev_e(u) \in A  \} \] 
the moduli space of holomorphic maps $u: C \to X$ with domain a disk $S$ with a single strip-like end
like end $e$ asymptotic along that end to a Reeb chord in $A$, lifting the Maslov index two disks
bounding $\Pi$.
\end{definition} 

We investigate the boundary of the moduli space of once-punctured disks bounding the filling with evaluation in the given cycle on the Legendrian.  The moduli of disks $\ol{\M}(L,A)$ bounding the Lagrangian contains the subset $\M(\Lambda,A)$ of disks in $\R \times A$ bounding
$\R \times \Lambda$ with a limiting Reeb chord in $A$, considered as
buildings $(u_0,u_1)$ in $X$ with the first component $u_0$ empty.
Recall from for example Zinger \cite{zinger}:

\begin{definition} 
Let $X$ be a compact manifold.      A {\em pseudocycle-with-boundary} is a pair $(Z,\iota)$
consisting of an oriented manifold with boundary $Z$ of dimension $\dim(Z) \in \Z_{\ge 0}$ and a smooth map $\iota: Z \to X$
so that the closure $\ol{\iota(Z)}$ is equal to the union of the image $\iota(Z)$ and a finite collection of images $\iota_k(Z_k)$ of manifolds $Z_k, k =1 ,\ldots, m$ of dimension $\dim(Z_k)$ at least two less than $\dim(Z)$.
\end{definition}

\begin{lemma} Let $Z = S^{2n-1}$ and $\Lambda = T^{n-1}$ the Clifford
Legendrian.  For the standard complex structure on $\R \times Z$, the moduli space $\M(\Lambda,A)_{\dim(A)}$ 
is smooth and compact.  For generic perturbations,
  the component $\ol{\M}(L,A)_{\dim(A)+1}$ of dimension $\dim(A)+1$ is a pseudocycle-with-boundary whose boundary is diffeomorphic to $\M(\Lambda,A)_{\dim(A)}$.
\end{lemma}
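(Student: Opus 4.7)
The plan is to analyze both moduli spaces by leveraging the explicit description of Maslov index two disks bounding the Clifford torus $\Pi = T^{n-1} \subset \CP^{n-1}$. Recall that there are exactly $n$ such disks, one for each facet of the moment polytope, and for the standard complex structure on $\CP^{n-1}$ each comes in an $(n-1)$-parameter family obtained by translating along $\Pi$. Under the lifting to $\R \times S^{2n-1}$, each such disk in $\CP^{n-1}$ with a given boundary marked point lifts uniquely to a holomorphic disk in $\R \times S^{2n-1}$ with a chosen asymptotic Reeb chord in $\cR(\Lambda)_{\min} \cong \Lambda$. This identifies $\M(\Lambda, \Lambda)_{n-1}$ with the disjoint union of $n$ copies of $\Lambda$, and so $\M(\Lambda, A)_{\dim A}$ is (a subspace of) $n$ copies of $A$. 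Smoothness at each point follows from a direct computation of the linearized Cauchy-Riemann operator on the explicit disk (regularity is standard for the standard complex structure on toric Fano varieties, as in Cho-Oh). Compactness follows from compactness of $A$, exactness of $\R \times S^{2n-1}$ in the symplectization direction, and monotonicity which prevents sphere and disk bubbling at the Maslov-2 level.

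For $\ol{\M}(L,A)_{\dim(A)+1}$, the first step is dimension counting: the moduli space of once-punctured disks with asymptotic in a submanifold $A \subset \cR(\Lambda)_{\min}$ has virtual dimension $\dim(A) + 1$, the extra dimension accounting for the $\R$-direction made available by the filling replacing the symplectization. For a generic choice of compatible almost complex structure and perturbation data on $L$ (as constructed in the earlier parts of the series), the Cauchy-Riemann equation is transverse on the top stratum, so the top stratum is a smooth manifold of dimension $\dim(A) + 1$. Since $L$ is compact as a topological subset of $\XX$ and the asymptotic constraint $A$ is closed, this stratum has compact closure.

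The next step is to identify the boundary and higher codimension strata using SFT compactness. The possible limits of a sequence in $\ol{\M}(L,A)_{\dim(A)+1}$ are: (i) a two-level building with top level in $\R \times Z$ and bottom level in $L$; (ii) sphere or disk bubbling; (iii) node formation on $S$. Exactness of $L$ kills all non-constant disks with boundary on $L$, and monotonicity of $\Pi$ with minimal Maslov number $2$ forbids disk bubbling at intermediate levels. Sphere bubbling in $Y$ is a codimension $\geq 2$ phenomenon since the minimal Chern number is at least $2$. For type (i), the bottom level is forced to be a constant disk in $L$ (on areal grounds, since the Reeb chord has been absorbed into the top level, and the remaining configuration has zero $\omega$-energy), so the building reduces to an element of $\M(\Lambda,A)$ paired with a point in $L$; the codimension-one piece is then exactly $\M(\Lambda,A)_{\dim(A)}$ with its product with a zero-dimensional factor. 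All other strata are codimension at least two, which is the pseudocycle-with-boundary condition.

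The hard part will be rigorously confirming that the actual boundary of the compactification is precisely the codimension-one stratum $\M(\Lambda, A)_{\dim(A)}$ and not some degenerate multiply-covered or broken configuration, and that the higher codimension degenerations (in particular node formation and sphere bubbles in the symplectization levels) live in strata of codimension at least two after generic perturbation. This requires invoking the gluing theorem for SFT buildings of the first level over the boundary with a trivial second level, showing the gluing map is a local diffeomorphism onto a collar of $\M(\Lambda,A)_{\dim(A)}$ inside $\ol{\M}(L,A)_{\dim(A)+1}$. The identification of the boundary as diffeomorphic to $\M(\Lambda,A)_{\dim(A)}$ will then follow from this gluing analysis combined with the explicit enumeration of $\M(\Lambda,A)$ from step one.
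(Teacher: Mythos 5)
Your proposal is correct in outline and follows essentially the same route as the paper: regularity of $\M(\Lambda,A)_{\dim(A)}$ via the explicit Blaschke-product description of the Maslov-two disks and their lifts, SFT-type compactness to list the degenerations of $\ol{\M}(L,A)_{\dim(A)+1}$, exactness of $L$ to kill disk bubbling, and a gluing argument to produce collar charts identifying the boundary with $\M(\Lambda,A)_{\dim(A)}$. Two remarks on the points you flag as ``the hard part.'' First, the paper closes the possibility of broken or multiply-punctured limit configurations with one short observation you should make explicit: the single positive puncture is asymptotic to a Reeb chord of the \emph{smallest possible} angle change, so the asymptotic cannot split into several chords and no intermediate symplectization levels with extra punctures can appear; hence the only candidate codimension-one degeneration is an unpunctured positive-area disk bubble in $X$, which exactness forbids. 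This replaces your appeal to ``areal grounds'' with a clean a priori constraint. Second, your treatment of codimension-$\geq 2$ strata by generic perturbation and monotonicity is not quite how the paper's framework works: the perturbation scheme in this series is the Cieliebak--Mohnke/Donaldson-hypersurface stabilization, so the residual degenerations are ghost bubbles carrying several hypersurface markings; these are not transversally cut out but are shown to represent tangencies with the hypersurface, hence of real codimension at least two, which is what gives the pseudocycle-with-boundary structure. The gluing step you defer is handled by citing the standard gluing lemma (Pardon), with the gluing parameter being the neck translation, exactly as you anticipate.
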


\begin{proof} The regularity statement on $\M(\Lambda,A)_{\dim(A)}$ follows from the
  lifting properties in Section II-\ref{II-liftsec} and regularity of disks in 
  $\CP^{n-1}$ bounding the Clifford torus of Maslov index two; these are given 
  by Blaschke products with a single factor and so regular.  Since there is a
  single boundary-puncture going to a Reeb chord $\gamma$ with the smallest possible angle at infinity, any codimension one bubbling in
  $\M(L,A)_{\dim(A)+1}$ must involve bubbling off a positive-area disk $u_v: S_v \to X$ without ends.   Such bubbling is impossible by the exactness assumption.  Therefore, the moduli space $\M(\Lambda,A)_{\dim(A)}$
  considered as a subset of the moduli space of buildings $\M(L,A)_{\dim(A)+1}$ (with empty first level) is the only boundary stratum.   The Cieliebak-Mohnke regularization from \cite{cm:trans} equips $\M(L,A)$ with the structure of a pseudocycle; 
  the only possible bubbling (besides a level going off to infinity along the 
  cylindrical end) appears when markings constrained to map to the Donaldson hypersurface come together to form a ghost bubble with more than one marking.
  As in \cite{cm:trans}, these configurations are not cut out transversally but 
  represent a tangency with the Donaldson hypersurface.  Such tangencies
  are real codimension at least two and their image is covered by 
  configurations obtained by removing the ghost bubble and enforcing a tangency of the required order at an interior marking.  To end the argument, 
  one needs charts for $\ol{\M}(L)$ considered as a manifold with boundary
  near $\M(\Lambda)$.  These are produced by a gluing argument as in 
  \cite[Lemma 5.12]{pardon}, in which the gluing parameter (representing the 
  translation which the map to $\R \times Z$ is glued into the neck of 
  $X$) and the coordinates on $\M(\Lambda)$ locally produce coordinates
  on $\M(L)$. \end{proof} 

\begin{proposition}
Let $K$ be a compact oriented manifold with boundary $\partial K$.  The image of $H^\bullet(K)$
in $H^\bullet(\partial K)$ is a maximally isotropic subspace with respect to the Poincar\'e pairing.
In particular, for $n \ge 4$ the image of the restriction map 
\[ H^1(K) \oplus H^{n-2}(K) \to H^1(\partial K) \oplus H^{n-2}(\partial K) \]
is of dimension $\dim(H^1(\partial K))$.
\end{proposition}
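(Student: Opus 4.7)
The plan is to combine the long exact sequence of the pair $(K,\partial K)$, Poincaré--Lefschetz duality on $K$, and Poincaré duality on the closed manifold $\partial K$; all cohomology is taken with real coefficients. Set $m = \dim K$.

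Isotropy is immediate from Stokes. Given $\tilde\alpha \in H^k(K)$ and $\tilde\beta \in H^{m-1-k}(K)$ represented by closed forms, the form $\tilde\alpha \cup \tilde\beta$ is closed on $K$ and restricts on $\partial K$ to $\iota^*\tilde\alpha \cup \iota^*\tilde\beta$, so
\[ \int_{\partial K} \iota^*\tilde\alpha \cup \iota^*\tilde\beta \;=\; \int_{K} d(\tilde\alpha \cup \tilde\beta) \;=\; 0. \]
Thus the image $\iota^*(H^\bullet(K)) \subset H^\bullet(\partial K)$ is isotropic for the Poincaré pairing.

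For maximality I would perform a rank count in the long exact sequence. Write $a_k = \dim H^k(K)$, $b_k = \dim H^k(\partial K)$, $c_k = \dim H^k(K,\partial K)$, and $r_k = \dim \im(\iota^*_k)$. Rank-nullity applied to
\[ H^{k-1}(\partial K) \to H^k(K,\partial K) \xrightarrow{j^*} H^k(K) \xrightarrow{\iota^*} H^k(\partial K) \]
yields the recurrence $r_k + r_{k-1} = a_k + b_{k-1} - c_k$. Substituting $c_k = a_{m-k}$ (Poincaré--Lefschetz) and $b_{k-1} = b_{m-k}$ (Poincaré duality on $\partial K$), and setting $\phi_k := r_k + r_{m-1-k} - b_k$, the sum of this recurrence with its image under $k \mapsto m-k$ collapses via $b_{m-k-1} = b_k$ to give $\phi_k + \phi_{k-1} = 0$. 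Reducing to the case $K$ connected with $\partial K \neq \emptyset$ (closed components of $K$ decouple from both sides), the base case $\phi_0 = 0$ follows from $r_0 = 1$ together with $r_{m-1} = b_0 - 1$, the latter computed by plugging $a_m = 0$, $r_m = 0$, and $c_m = 1$ into the recurrence at $k = m$. Hence $\phi_k = 0$ for all $k$, i.e.\ $r_k + r_{m-1-k} = b_k$, establishing maximal isotropy.

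Specializing to $m = n$ and $k = 1$ (the hypothesis $n \ge 4$ ensures $n-2$ is a distinct positive degree) and using $b_1 = b_{n-2}$ on $\partial K$, one obtains
\[ \dim \iota^*(H^1(K)) + \dim \iota^*(H^{n-2}(K)) \;=\; \dim H^1(\partial K), \]
as claimed. The only mildly delicate step is checking the base case $\phi_0 = 0$, which requires the explicit identification of $r_0$ and $r_{m-1}$; everything else is formal bookkeeping with two dualities and one long exact sequence.
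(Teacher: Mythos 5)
Your argument is correct, and it is more self-contained than what the paper actually does: for the first claim (maximal isotropy of $\iota^*(H^\bullet(K))$, the ``half lives, half dies'' principle) the paper simply cites an external reference, and then deduces the second claim from the fact that the pairing between $H^1(\partial K)$ and $H^{n-2}(\partial K)$ is perfect, so that a split maximal isotropic is $V\oplus V^{\on{ann}}$. You instead prove the first claim directly, and your bookkeeping checks out: the rank--nullity identity $r_k+r_{k-1}=a_k+b_{k-1}-c_k$ from the long exact sequence of the pair, the substitutions $c_k=a_{m-k}$ (Lefschetz duality) and $b_{k-1}=b_{m-k}$ (Poincar\'e duality on $\partial K$), the cancellation giving $\phi_k+\phi_{k-1}=0$, and the base case $\phi_0=0$ via $r_0=1$, $r_m=a_m=0$, $c_m=1$ are all verified correctly; together with the Stokes argument for isotropy, $r_k+r_{m-1-k}=b_k$ does give $W=W^\perp$ degree by degree, and the $k=1$ case is exactly the displayed dimension count (with $n\ge 4$ guaranteeing $1\ne n-2$, as you note). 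Two minor remarks. First, your isotropy step via de Rham forms ties you to $\R$ coefficients; the paper applies this proposition downstream with $\Z_2$ coefficients, where the same isotropy follows from naturality of the cup product together with $\iota_*[\partial K]=0$ in $H_{m-1}(K)$, and your dimension count is already field-agnostic, so the argument generalizes with no change. Second, the reduction to connected $K$ with nonempty boundary deserves the one extra sentence that closed components contribute nothing to either $r_k$ or $b_k$ --- which you do gesture at. What your route buys is a proof rather than a citation, at the cost of a page of linear algebra; the paper's route buys brevity at the cost of outsourcing the key topological input.
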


\begin{proof} The first claim is \cite[Lemma 3.2.4]{fielda}, although it must be more widely known.
The second claim follows since the pairing between $H^1(\partial K)$ and $H^{n-2}(\partial K)$ is perfect.
Thus, any maximally isotropic subspace of split form is of the form $V \oplus V^{\ann}$ where $V \subset H^1(K)$
and $V^{\ann} \subset H^{n-2}(\partial K)$ is its annihilator.
\end{proof}



We obtain relations in codimension one by considering moduli spaces constrained to pass through codimension one cycles.  
Consider the Pontrjagin product
\[ H_\bullet( \Lambda,\Z_2) \times H_\bullet( \Lambda,\Z_2) \to
H_\bullet( \Lambda,\Z_2) \] 
generated by the group multiplication
$\Lambda \times \Lambda \to \Lambda$ using the diffeomorphism
$\Lambda \cong T^{n-1}$.  For each $i$ define a class $\vv{\mu}_i$ by taking the Pontrjagin product of the
classes $[\mu_j]$ with $j \neq i$;
\[ [\vv{\mu}_i] = [\mu_1][  \mu_2] \ldots [\mu_{i-1}][  \mu_{i+1}] \ldots
[\mu_{n-1} ] \in H_{n-2}(\Lambda,\Z_2) .\]
Define a {\em vanishing subspace of degree one classes } corresponding to the relations in 
Lemma \ref{onerelations}:
\begin{eqnarray*} 
I  &=& 
\on{span} (  [\mu_1], [\mu_2] - [\mu_3], [\mu_4 ]- [\mu_5], \ldots , [  \mu_{n-2 } ] -  [\mu_{n-1}]) \subset H_1(\Lambda,\Z_2) .\end{eqnarray*}
We showed in Lemma \ref{onerelations} that $I$ is contained in the kernel of the inclusion map of the boundary in homology.

\begin{proposition} Suppose that $\Lambda = T^{n-1}$ is the Clifford
Legendrian in $Z = S^{2n-1}$.  The kernel $\hat{I}$ of the inclusion map
  $H_{n-2}(\Lambda,\Z_2) \to H_{n-2}(L,\Z_2)$ has dimension 
  strictly greater than $(n-1)/2$.
\end{proposition}

\begin{proof}
Consider the moduli space
  of punctured holomorphic  disks whose boundary evaluation is constrained
  to lie in a codimension two subtorus.  We identify the component of minimal length chords with $\Lambda \subset \cR(\Lambda)$.  Let
  $A_i \subset \Lambda$ be the circle in the $i$-th component, 
\[ A_i := \{ z_j =1,  j \neq i \} \subset T^{n-1} . \]
Define the codimension two sub-torus
\begin{equation} \label{Aprod} \hat{A}_i = A_1 \ldots A_{i-1} A_{i+2} \ldots A_{n-1}  \subset \Lambda.\end{equation}
skipping the $i$ and $i+1$-st factors. 
Let $\ev_{\Lambda,1}: \M(\Lambda) \to \cR(\Lambda)$ be evaluation at the puncture, and 
\[ \M(\Lambda,\hat{A}_i) = \ev_{\Lambda,1}^{-1}(\hat{A}_i) \]
the moduli space of once-punctured disks in 
$\R \times Z$ bounding $\R \times \Lambda$ with the puncture mapping
to $\hat{A}_i$; that is, the space of lifts of Maslov index two disks in $\CP^{n-1}$ with Reeb chord starting at $\hat{A}_i$.  Since Blaschke disks of index two are regular, the space of lifts has a natural action of $\Lambda \cong T^{n-1}$.  The evaluation map is transverse to 
$\hat{A}_i$, so the moduli space 
$\M(\Lambda,\hat{A}_i)_{\dim(\hat{A}_i)}$ is smooth and diffeomorphic to 
$\hat{A}_i$ via the evaluation map since each lift is unique by Theorem \ref{II-liftthm}.

We now view the moduli space of disks in the previous paragraph as
a boundary component of moduli space of punctured disks bounding the cobordism.  Consider the moduli space $\M(L)$ of once-punctured
disks bounding $L$.   Evaluation at infinity $\ev_{L}$ still defines a map 
to $\cR(\Lambda)$, and denote 
\[ \M(L,\hat{A}_i) = \ev_{L}^{-1}(\hat{A}_i) \] 
the moduli space of once-punctured disks 
with Reeb chord in $\hat{A}_i$. 
This moduli space contains $\M(\Lambda,\hat{A}_i)$ in its boundary, 
since we may view $\M(\Lambda,\hat{A}_i)$ 
as a moduli space of two-level buildings
where the first level is empty. 
Let $K_i$ be the connected component of $\M(L,\hat{A}_i)$ containing $\M(\Lambda,\hat{A}_i)$ in its boundary.
The connected component $K_i$ may also contain possibly other lifts of Blaschke products $u_k$ for $k$ in some subset  
\[ I_i \subset \{ 1, \ldots, n \} \] 
as shown in Figure \ref{corelns}.
By Lemma \ref{onerelations}, after re-ordering we may assume that $I_{2k+1}$ contains $2k+2$ for each $k$.
 \begin{figure}[ht]
     \centering
     {\tiny 
     \scalebox{.8}{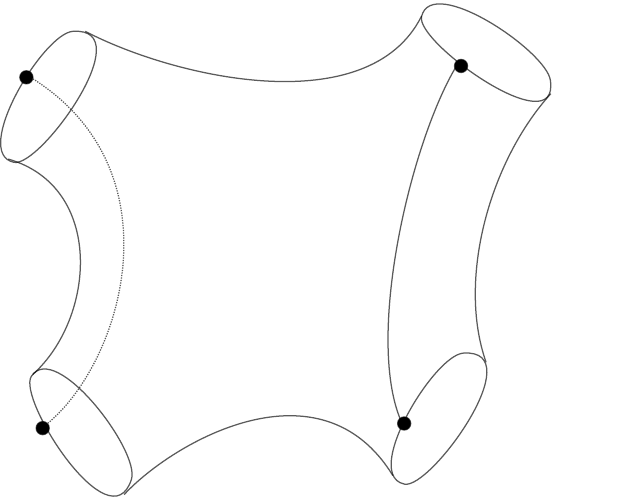}}
     \caption{Homological relations between components of moduli spaces}
     \label{corelns}
 \end{figure}

In order to construct a cobordism we consider moduli spaces with a marking 
at the boundary that is not a puncture. Consider moduli spaces $\M_1(\Lambda,\hat{A}_i)$,
$\M_1(L,\hat{A}_i)$ 
of pairs $(u,z)$ where $u: C \to \R \times Z$ or $u: C \to \R $ is a once-punctured holomorphic disk bounding $L$ or $\Lambda$ and $z \in \partial C$ is a single marking on the boundary. There are evaluation maps at the boundary
\[ \ev_{\Lambda,1}: \M_1(\Lambda,\hat{A}_i)
\to \Lambda,  \ 
\ev_{L,1}:\M_1(L,\hat{A}_i) \to L \]
defined by mapping $(u,z)$ to $u(z)$.
The forgetful maps 
\[ \phi_\Lambda: \ \M_1(\Lambda,\hat{A}_i) \to 
 \M(\Lambda,\hat{A}_i), \ 
 \phi_L: \M_1(L,\hat{A}_i) \to 
 \M(L,\hat{A}_i)
 \]
 have fiber $[-\infty,\infty]$; the endpoints correspond to configurations 
containing $(u_1,u_2)$ with $u_2: C_2 \to X$ that  a trivial strip with a marking $z
\in \partial C_2$.
Let $K_{1,i}$ be the connected component of $\M(L,A_i)$ containing $\M(\Lambda,\hat{A}_i)$ in its boundary.
No other breakings are possible, since the action of the Reeb chord at the puncture is minimal.  

The connected component in the previous paragraph has two kinds of boundary components, where either the map or the marking has gone off to infinity.
We  construct a cobordism in $\Lambda$ by capping off the Reeb chords to form cycles. 
Recall from Definition \ref{I-boundclass} the  definition of a capping path $\hat{\gamma}$
for a Reeb chord $\gamma.$
By translation, 
a capping path $\hat{\gamma}$ for one element 
of $\M(\Lambda,\hat{A}_i)$ limiting to a Reeb chord $\gamma = \ev_e(u)$ starting
at the identity induces capping paths 
\[ (\ev_e(u))(0) \hat{\gamma}: [0,1] \to \Lambda \]
for all elements in the same component by translation by $\ev_e(u)(0) \in \Lambda \cong T^{n-1}$.   Define a ``capped off moduli space'' 
\[ \widehat{\M}_1(\Lambda,\hat{A}_i) = {\M}_1(\Lambda,\hat{A}_i)
\cup \left(  {\M}(\Lambda,\hat{A}_i)  \times [0,1] \right) .
\]
A map 
\[ \phi_{\Lambda,i}: \widehat{\M}_1(\Lambda,\hat{A}_i) \to \Lambda  \] 
is defined by evaluation of the 
map on the first component
\[ {\M}_1(\Lambda,\hat{A}_i)\to \Lambda, \quad (u,z) \mapsto u(z) \] 
and evaluation of the capping path $(\ev_e(u))(0) \hat{\gamma}$ for the limit
$\ev_e(u)$ on the second:
\[ {\M}(\Lambda,\hat{A}_i) \times [0,1] \to \Lambda, 
\quad (u,t) \mapsto (\ev_e(u))(0) \hat{\gamma}(t) . \]
The same construction applies to the moduli space of punctured disks bounding the filling:  
\[ \widehat{K}_{1,i} = 
K_{1,i}  \cup( K_i \times [0,1] )  .
\]
Via evaluation the capped off moduli space gives rise to a map 
\[ \phi_{L,i}: 
\widehat{K}_{1,i} \to L .\]
with boundary $\widehat{\M}_1(\Lambda,\hat{A}_i)$, where 
$\phi_{L,i}$ is defined by evaluation 
at the marking on $K_{1,i}$, and evaluation fo the capped off path on 
$K_i \times [0,1] $.  The capped-off moduli space space is a topological manifold with boundary.   The boundary components isomorphic to 
$K_i$, consisting of configurations where the marking has bubbled off onto a constant strip, each occur twice, once in $K_{1,i}$
and once in $K_i \times [0,1]$.
This leaves the boundary component $\M(\Lambda,\hat{A}_i)$, so that 
\[  \partial 
\widehat{K}_{1,i} =  \M(\Lambda,\hat{A}_i) .\]
Push-ing forward to homology we get 
\[ \phi_{L,i,*} (\partial 
\widehat{K}_{1,i}) = \iota_* \phi_{\Lambda,i,*}( \M(\Lambda,\hat{A}_i)) \]
where $ \iota: \Lambda \to L $  is the inclusion of the boundary. 

 We compute the homology class
of the boundary.  Consider the forgetful fibration 
\[ \M_1(\Lambda,\hat{A}_i) \to \M(\Lambda,\hat{A}_i)_{\dim(A)} . \]
Each fiber evaluates to the translation of the boundary 
$\partial u_i$ by an element of $A$, the moduli space being invariant
under multiplication by $A$.   It follows that the homology class
of the image in $\Lambda = \partial L$ is
the Pontrjagin product 
\[  [\phi_{\Lambda,i}(\widehat{\M}_1(\Lambda,\hat{A}_i)] = \mu_i [A] = \mu_1 \ldots \mu_i \mu_{i+2} \ldots \mu_{n-1} \in H_{n-2}(\Lambda,\Z_2) . \]

Finally, we compute the relation in homology obtained from the filling.
We have
\begin{eqnarray*} 0 &=&
\phi_{L,i,*} [ \partial 
\widehat{K}_{1,i}] \\
&=& \iota_* \phi_{\Lambda,i,*}( \M(\Lambda,\hat{A}_i)) \\
&=& \sum_{k \in I_i} \iota_* \mu_k  \iota_* [\hat{A}_i]   \in H_{n-2}(L,\Z_2). \end{eqnarray*}
Since the Pontrjagin products $\mu_m \mu_m = 0$ vanishes for all $m$,
by definition of $\hat{A}_i$ in \eqref{Aprod}, 
\[ [\mu_k] [\hat{A}_i]  = 0 \quad \text{unless} \quad k \in \{ i,i+1 \} . \]
This gives the relation 
\[ [\vv{\mu}_{2k}] +   \vv{\mu}_{2k+1} = [\mu_{2k}] [\hat{A}_{2k}] + \mu_{2k+1} [\hat{A}_{2k+1}] = 0\]
for $k = 1,\ldots, (n-2)/2$.  On the other hand, 
taking $i = 1$ This gives either the relation 
\[ [\vv{\mu}_{1}]  = [\mu_1] [\hat{A}_1] = 0 \]
if $2 \notin I_1$
or the relation 
\[  [\vv{\mu}_{1}] + [\vv{\mu}_2]  = 0 \]
if $2 \in I_1$.  Either way, the dimension of $\hat{I}$ is greater
than $(n-1)/2.$
\end{proof}

\begin{lemma} \label{lbound} $\dim(I) + \dim(\hat{I}) > n-1$.
\end{lemma}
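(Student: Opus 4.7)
The plan is to verify the claim by explicit linear-algebra computation in the Pontrjagin algebra $H_\bullet(\Lambda,\Z_2)$, which is the exterior algebra generated by $\mu_1,\ldots,\mu_{n-1}$. The single relation $\sum_{i=1}^n \mu_i = 0$ in $H_1(\Lambda,\Z_2)$ lets us write $\mu_n = \mu_1 + \cdots + \mu_{n-1}$, and dualizing under the top-class Pontrjagin pairing gives $\vv{\mu}_n = \vv{\mu}_1 + \cdots + \vv{\mu}_{n-1}$ in $H_{n-2}(\Lambda,\Z_2)$. These are the only relations we will need.

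First I would expand each listed generator of $I$ in the basis $\{\mu_1,\ldots,\mu_{n-1}\}$. The element $\mu_1$ and each pair-sum $\mu_{2i}+\mu_{2i+1}$ (which is $\mu_{2i}-\mu_{2i+1}$ mod 2) have pairwise disjoint support, and in particular are $\Z_2$-linearly independent. The only exception is the final generator $\mu_{n-1}-\mu_n$: after substituting $\mu_n = \sum_{j<n}\mu_j$ this becomes the long sum $\mu_1+\mu_2+\cdots+\mu_{n-2}$. A direct comparison shows that this long sum is exactly the $\Z_2$-sum of all the preceding generators, so the generators satisfy precisely one $\Z_2$-linear relation, and $\dim I$ equals the number of listed generators minus one.

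Next I would run the analogous computation for $\hat I$ in the basis $\{\vv{\mu}_1,\ldots,\vv{\mu}_{n-1}\}$. The pair-partition used for $\hat I$ is shifted by one index relative to that for $I$ — the pairs are $\{1,2\},\{3,4\},\ldots$ rather than $\{2,3\},\{4,5\},\ldots$ — and the list terminates with the two separate singletons $\vv{\mu}_{n-1},\vv{\mu}_n$ rather than with a single pair-difference. Summing all listed generators of $\hat I$ and applying $\vv{\mu}_n = \sum_{j<n}\vv{\mu}_j$, the total collapses to $0$ precisely when $n$ is even (producing one dependency) and to a single nonzero $\vv{\mu}_k$ when $n$ is odd (producing no dependency). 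Tallying the generator counts against the dependency counts in each parity case yields $\dim I + \dim \hat I \geq n$, and hence $\dim I + \dim \hat I > n-1$ as required.

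The main obstacle is careful parity-dependent bookkeeping. One has to confirm in both families that the long generator produced by substituting the single relation $\mu_n = \sum_{j<n}\mu_j$ (respectively $\vv{\mu}_n = \sum_{j<n}\vv{\mu}_j$) is exactly the sum of the other listed generators, so that the unique dependency is captured once and not duplicated; and one has to check that the two slightly offset pair-partitions do not introduce hidden dependencies among the basis-level expansions of the remaining generators. Once these combinatorial verifications are done the inequality is immediate.
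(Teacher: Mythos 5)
Your strategy---expand the listed generators of $I$ and $\hat I$ in the basis $\mu_1,\ldots,\mu_{n-1}$ of $H_1(\Lambda,\Z_2)$, impose the single relation $\mu_n=\mu_1+\cdots+\mu_{n-1}$ (and its dual), and count dependencies---is more careful in spirit than the paper's own proof, which is a one-line count of spanning elements ($n/2$ in each of $I$ and $\hat I$, hence $n$ in total) with linear independence left entirely implicit. The problem is that your bookkeeping does not close the argument in the only case where the lemma is used. Lemma \ref{lbound} is invoked in the proof of Theorem \ref{nofill} precisely when $n=2k$ is even (odd $n$ is disposed of earlier by a separate parity argument), and for even $n$ your two intermediate claims contradict your conclusion: you assert one dependency among the generators of $I$ and, for even $n$, one dependency among the generators of $\hat I$. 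The two lists contain at most $n/2$ and $n/2+1$ elements, so two dependencies leave $\dim I+\dim\hat I\le (n/2)+(n/2+1)-2=n-1$, which is exactly the bound the lemma must exceed. Your final sentence, that the tally ``yields $\dim I+\dim\hat I\ge n$,'' is asserted rather than computed and does not follow from the dependency counts you state.

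The repair is that for $n$ even the generators of $I$ are in fact linearly independent, so $I$ loses no dimension. The element $\mu_1$ and the pair-sums $\mu_{2i}+\mu_{2i+1}$ have disjoint supports, and the remaining generator $\mu_{n-1}+\mu_n=\mu_1+\cdots+\mu_{n-2}$ does not lie in their span: every element of $\on{span}(\mu_1,\mu_2+\mu_3,\ldots)$ has equal coefficients on the two members of each pair, whereas $\mu_1+\cdots+\mu_{n-2}$ has coefficient $1$ on $\mu_{n-2}$ and $0$ on $\mu_{n-1}$. Your identity ``the long generator equals the sum of the preceding ones'' is valid only when the preceding pairs cover an index set of odd length, i.e.\ only for $n$ odd, which is the parity not at issue here. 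Combining $\dim I=n/2$ with your (correct) observation that $\hat I$ loses exactly one dimension to the relation $\vv{\mu}_n=\vv{\mu}_1+\cdots+\vv{\mu}_{n-1}$ when $n$ is even gives $\dim I+\dim\hat I=n>n-1$, which is what the paper's bare count is implicitly recording. So the approach is salvageable, but as written the even-case arithmetic is wrong in a way that would defeat the strict inequality rather than establish it.
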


\begin{proof}  There are $n/2$ independent vectors in $I$
and $n/2$ in $\hat{I}$ for a total of $n$. 
\end{proof}

\begin{proof}[Proof of Theorem \ref{nofill}] 
  It remains to show Theorem \ref{nofill} in the case $n = 2k$ is even.  The dimension of 
  the subspaces $I$
  and $\hat{I}$ are both half of $\dim(\Lambda) + 1$, and so larger than  
  $ \frac{1}{2} \dim(H_1(\Lambda,\Z_2))$.  On
  the other hand, the intersection pairing
  \[ H_1(\Lambda,\Z_2) \times H_{n-2}(\Lambda,\Z_2) \to \Z_2 \]
  is trivial on $\ker(\iota_*)$.  Indeed, suppose that
  \[ C_- \subset \Lambda, \quad C_+ \subset \Lambda \] 
  are pseudocycles that extend
  to pseudocycles with boundary $\hat{C_-}, \hat{C_+}$ in $L$ bounding
  $C_-,C_+$.  The intersection $\hat{C_-} \cap \hat{C_+} \subset L$ is, after
  generic perturbation, a one-dimensional submanifold with boundary
  \[ \partial (\hat{C_-} \cap \hat{C_+}) = C_- \cap C_+ . \] 
  Thus $C_- \cap C_+$ represents the zero homology class in  $H_0(\Lambda,\Z_2)$.  Since the pairing
  $H_1(\Lambda,\Z_2) \times H_{n-2}(\Lambda,\Z_2) \to \Z_2$ is non-degenerate, the dimension of any
  isotropic subspace is at most dimension $(n-1)$. This contradicts the estimate
  $\dim(I \oplus \hat{I}) > n-1$ from Lemma \ref{lbound}.
\end{proof} 

\bibliography{leg}{}

@misc {BCSW1,
    AUTHOR = {Blakey, Kenneth and Chanda, Soham and Sun, Yuhan and Woodward, Chris},
     TITLE = {Augmentation Varieties and Disk Potentials {I}},
    YEAR = {2024},
    NOTE = {\url{https://arxiv.org/abs/2310.17821}}
}

@misc {BCSW2,
    AUTHOR = {Blakey, Kenneth and Chanda, Soham and Sun, Yuhan and Woodward, Chris},
    TITLE = {Augmentation Varieties and Disk Potentials {II}},
    YEAR = {2024},
    NOTE = {\url{https://arxiv.org/abs/2401.13021}},
}

@misc{aganagic,
      title={Mirror Symmetry, {D}-Branes and Counting Holomorphic Discs}, 
      author={Mina Aganagic and Cumrun Vafa},
      year={2000},
      note={\url{https://arxiv.org/abs/hep-th/0012041}},
      archivePrefix={arXiv},
      primaryClass={hep-th}
}

@article {joyce,
    AUTHOR = {Joyce, Dominic},
     TITLE = {Special {L}agrangian submanifolds with isolated conical
              singularities. {V}. {S}urvey and applications},
   JOURNAL = {J. Differential Geom.},
  FJOURNAL = {Journal of Differential Geometry},
    VOLUME = {63},
      YEAR = {2003},
    NUMBER = {2},
     PAGES = {279--347},
      ISSN = {0022-040X},
   MRCLASS = {53C38 (32Q25)},
  MRNUMBER = {2015549},
MRREVIEWER = {Tommaso Pacini},
       URL = {http://projecteuclid.org.proxy.libraries.rutgers.edu/euclid.jdg/1090426679},
}

@article {chooh:fano,
    AUTHOR = {Cho, Cheol-Hyun and Oh, Yong-Geun},
     TITLE = {Floer cohomology and disc instantons of {L}agrangian torus
              fibers in {F}ano toric manifolds},
   JOURNAL = {Asian J. Math.},
  FJOURNAL = {Asian Journal of Mathematics},
    VOLUME = {10},
      YEAR = {2006},
    NUMBER = {4},
     PAGES = {773--814},
      ISSN = {1093-6106},
   MRCLASS = {53D40 (14J45 14J81 14M25 53D12 81T40)},
  MRNUMBER = {2282365},
MRREVIEWER = {Chien-Hao Liu},
       DOI = {10.4310/AJM.2006.v10.n4.a10},
       URL = {https://doi-org.proxy.libraries.rutgers.edu/10.4310/AJM.2006.v10.n4.a10},
}

@article {cm:trans,
    AUTHOR = {Cieliebak, Kai and Mohnke, Klaus},
     TITLE = {Symplectic hypersurfaces and transversality in
              {G}romov-{W}itten theory},
   JOURNAL = {J. Symplectic Geom.},
  FJOURNAL = {The Journal of Symplectic Geometry},
    VOLUME = {5},
      YEAR = {2007},
    NUMBER = {3},
     PAGES = {281--356},
      ISSN = {1527-5256},
   MRCLASS = {53D45},
  MRNUMBER = {2399678},
MRREVIEWER = {Michael J. Usher},
       URL = {http://projecteuclid.org.proxy.libraries.rutgers.edu/euclid.jsg/1210083200},
}

@misc{kps:pre,
      title={On {F}ukaya categories and prequantization bundles}, 
      author={Tatsuki Kuwagaki and Adrian Petr and Vivek Shende},
      year={2024},
      eprint={2406.08852},
      archivePrefix={arXiv},
      primaryClass={math.SG},
      url={https://arxiv.org/abs/2406.08852}, 
}

@article {ees:leg,
    AUTHOR = {Ekholm, Tobias and Etnyre, John and Sullivan, Michael},
     TITLE = {Legendrian contact homology in {$P\times\Bbb R$}},
   JOURNAL = {Trans. Amer. Math. Soc.},
  FJOURNAL = {Transactions of the American Mathematical Society},
    VOLUME = {359},
      YEAR = {2007},
    NUMBER = {7},
     PAGES = {3301--3335},
      ISSN = {0002-9947},
   MRCLASS = {53D35 (53D40)},
  MRNUMBER = {2299457},
MRREVIEWER = {Michael J. Usher},
       DOI = {10.1090/S0002-9947-07-04337-1},
       URL = {https://doi-org.proxy.libraries.rutgers.edu/10.1090/S0002-9947-07-04337-1},
}

@article {fielda,
    AUTHOR = {Wehrheim, Katrin and Woodward, Chris},
     TITLE = {Floer field theory for coprime rank and degree},
   JOURNAL = {Indiana Univ. Math. J.},
  FJOURNAL = {Indiana University Mathematics Journal},
    VOLUME = {69},
      YEAR = {2020},
    NUMBER = {6},
     PAGES = {2035--2088},
      ISSN = {0022-2518},
   MRCLASS = {53D40 (18F99 32G15 53D37)},
  MRNUMBER = {4170087},
MRREVIEWER = {Alexander Fel\cprime shtyn},
       DOI = {10.1512/iumj.2020.69.8018},
       URL = {https://doi-org.proxy.libraries.rutgers.edu/10.1512/iumj.2020.69.8018},
}

@article {ng:framed,
    AUTHOR = {Ng, Lenhard},
     TITLE = {Framed knot contact homology},
   JOURNAL = {Duke Math. J.},
  FJOURNAL = {Duke Mathematical Journal},
    VOLUME = {141},
      YEAR = {2008},
    NUMBER = {2},
     PAGES = {365--406},
      ISSN = {0012-7094},
   MRCLASS = {53D40 (53D12 53D35 57M27)},
  MRNUMBER = {2376818},
MRREVIEWER = {Tobias Ekholm},
       DOI = {10.1215/S0012-7094-08-14125-0},
       URL = {https://doi-org.proxy.libraries.rutgers.edu/10.1215/S0012-7094-08-14125-0},
}

@article {chw:tori,
    AUTHOR = {Chanda, Soham and Hirschi, Amanda and Wang, Luya},
     TITLE = {Infinitely many exotic {L}agrangian tori in higher projective
              spaces},
   JOURNAL = {J. Fixed Point Theory Appl.},
  FJOURNAL = {Journal of Fixed Point Theory and Applications},
    VOLUME = {26},
      YEAR = {2024},
    NUMBER = {4},
     PAGES = {Paper No. 46, 18},
      ISSN = {1661-7738,1661-7746},
   MRCLASS = {53D05 (53D12)},
  MRNUMBER = {4807025},
MRREVIEWER = {Stefan\ Nemirovski},
       DOI = {10.1007/s11784-024-01137-4},
       URL = {https://doi.org/10.1007/s11784-024-01137-4},
}

@misc{diogo2020augmentations,
      title={Augmentations, annuli, and {A}lexander polynomials}, 
      author={Luís Diogo and Tobias Ekholm},
      year={2020},
      note={{\url{https://arxiv.org/abs/2005.09733}}}
}

@article{gaopoly,
  title={Absolute irreducibility of polynomials via Newton polytopes},
  author={Gao, Shuhong},
  journal={Journal of Algebra},
  volume={237},
  number={2},
  pages={501--520},
  year={2001},
  publisher={Elsevier}
}

@article{ostro,
  title={On multiplication and factorization of polynomials, {I}. {L}exicographic orderings and extreme aggregates of terms},
  author={Ostrowski, Alexander M},
  journal={aequationes mathematicae},
  volume={13},
  number={3},
  pages={201--228},
  year={1975},
  publisher={Springer}
}

@misc{gaoshenweng,
      title={Augmentations, Fillings, and Clusters}, 
      author={Honghao Gao and Linhui Shen and Daping Weng},
      year={2020},
      note={\url{https://arxiv.org/abs/2008.10793}},
      archivePrefix={arXiv},
      primaryClass={math.SG}
}

@article {dr:bs,
    AUTHOR = {Dimitroglou Rizell, Georgios and Golovko, Roman},
     TITLE = {Legendrian submanifolds from {B}ohr–{S}ommerfeld covers of monotone {L}agrangian tori},
    JOURNAL = {Comm. Anal. Geom.},
    VOLUME = {31},
      YEAR = {2023},
    NUMBER = {4},
     PAGES = {905--978},
}

@article {zinger,
    AUTHOR = {Zinger, Aleksey},
     TITLE = {Pseudocycles and integral homology},
   JOURNAL = {Trans. Amer. Math. Soc.},
  FJOURNAL = {Transactions of the American Mathematical Society},
    VOLUME = {360},
      YEAR = {2008},
    NUMBER = {5},
     PAGES = {2741--2765},
      ISSN = {0002-9947},
   MRCLASS = {57R95 (53D45 55N99)},
  MRNUMBER = {2373332},
MRREVIEWER = {Yunfeng Jiang},
       DOI = {10.1090/S0002-9947-07-04440-6},
       URL = {https://doi.org/10.1090/S0002-9947-07-04440-6},
}

@article {pardon,
    AUTHOR = {Pardon, John},
     TITLE = {Contact homology and virtual fundamental cycles},
   JOURNAL = {J. Amer. Math. Soc.},
  FJOURNAL = {Journal of the American Mathematical Society},
    VOLUME = {32},
      YEAR = {2019},
    NUMBER = {3},
     PAGES = {825--919},
      ISSN = {0894-0347,1088-6834},
   MRCLASS = {53D42 (53D35 53D40 57R17)},
  MRNUMBER = {3981989},
MRREVIEWER = {Alexander\ Fel\cprime shtyn},
       DOI = {10.1090/jams/924},
       URL = {https://doi.org/10.1090/jams/924},
}

@book {fooo:part2,
    AUTHOR = {Fukaya, Kenji and Oh, Yong-Geun and Ohta, Hiroshi and Ono,
              Kaoru},
     TITLE = {Lagrangian intersection {F}loer theory: anomaly and
              obstruction. {P}art {II}},
    SERIES = {AMS/IP Studies in Advanced Mathematics},
    VOLUME = {46},
 PUBLISHER = {American Mathematical Society, Providence, RI; International
              Press, Somerville, MA},
      YEAR = {2009},
     PAGES = {i--xii and 397--805},
      ISBN = {978-0-8218-4837-1},
   MRCLASS = {53D40 (53D12)},
  MRNUMBER = {2548482},
MRREVIEWER = {Michael J. Usher},
       DOI = {10.1090/amsip/046.2},
       URL = {https://doi-org.proxy.libraries.rutgers.edu/10.1090/amsip/046.2},
}

@article {vianna:inf,
    AUTHOR = {Vianna, Renato Ferreira de Velloso},
     TITLE = {Infinitely many exotic monotone {L}agrangian tori in
              {$\Bbb{CP}^2$}},
   JOURNAL = {J. Topol.},
  FJOURNAL = {Journal of Topology},
    VOLUME = {9},
      YEAR = {2016},
    NUMBER = {2},
     PAGES = {535--551},
      ISSN = {1753-8416},
   MRCLASS = {53D12 (53D37 53D42)},
  MRNUMBER = {3509972},
MRREVIEWER = {Stefan Nemirovski},
       DOI = {10.1112/jtopol/jtw002},
       URL = {https://doi-org.proxy.libraries.rutgers.edu/10.1112/jtopol/jtw002},
}

@article {treumann:cubic,
    AUTHOR = {Treumann, David and Zaslow, Eric},
     TITLE = {Cubic planar graphs and {L}egendrian surface theory},
   JOURNAL = {Adv. Theor. Math. Phys.},
  FJOURNAL = {Advances in Theoretical and Mathematical Physics},
    VOLUME = {22},
      YEAR = {2018},
    NUMBER = {5},
     PAGES = {1289--1345},
}

@article {dr:few,
    AUTHOR = {Dimitroglou Rizell, Georgios},
     TITLE = {Knotted {L}egendrian surfaces with few {R}eeb chords},
   JOURNAL = {Algebr. Geom. Topol.},
  FJOURNAL = {Algebraic \& Geometric Topology},
    VOLUME = {11},
      YEAR = {2011},
    NUMBER = {5},
     PAGES = {2903--2936},
      ISSN = {1472-2747},
   MRCLASS = {53D42 (53D12)},
  MRNUMBER = {2846915},
MRREVIEWER = {Jelena Kati\'{c}},
       DOI = {10.2140/agt.2011.11.2903},
       URL = {https://doi-org.proxy.libraries.rutgers.edu/10.2140/agt.2011.11.2903},
}

@article {psw:disk,
    AUTHOR = {Pandharipande, R. and Solomon, J. and Walcher, J.},
     TITLE = {Disk enumeration on the quintic 3-fold},
   JOURNAL = {J. Amer. Math. Soc.},
  FJOURNAL = {Journal of the American Mathematical Society},
    VOLUME = {21},
      YEAR = {2008},
    NUMBER = {4},
     PAGES = {1169--1209},
      ISSN = {0894-0347},
   MRCLASS = {14N35 (14J32 14N10 53D45)},
  MRNUMBER = {2425184},
MRREVIEWER = {Hans U. Boden},
       DOI = {10.1090/S0894-0347-08-00597-3},
       URL = {https://doi-org.proxy.libraries.rutgers.edu/10.1090/S0894-0347-08-00597-3},
}

@article {lin:open,
    AUTHOR = {Lin, Yu-Shen},
     TITLE = {Open {G}romov-{W}itten invariants on elliptic {K}3 surfaces
              and wall-crossing},
   JOURNAL = {Comm. Math. Phys.},
  FJOURNAL = {Communications in Mathematical Physics},
    VOLUME = {349},
      YEAR = {2017},
    NUMBER = {1},
     PAGES = {109--164},
      ISSN = {0010-3616},
   MRCLASS = {14N35 (14J27 14J28 32Q65 53D45)},
  MRNUMBER = {3592747},
MRREVIEWER = {Feng Qu},
       DOI = {10.1007/s00220-016-2754-0},
       URL = {https://doi-org.proxy.libraries.rutgers.edu/10.1007/s00220-016-2754-0},
}

@article {vafaetal,
    AUTHOR = {Aganagic, Mina and Ekholm, Tobias and Ng, Lenhard and Vafa,
              Cumrun},
     TITLE = {Topological strings, {D}-model, and knot contact homology},
   JOURNAL = {Adv. Theor. Math. Phys.},
  FJOURNAL = {Advances in Theoretical and Mathematical Physics},
    VOLUME = {18},
      YEAR = {2014},
    NUMBER = {4},
     PAGES = {827--956},
      ISSN = {1095-0761},
   MRCLASS = {81T45 (57M27 81T30)},
  MRNUMBER = {3277674},
MRREVIEWER = {Daniel David Moskovich},
       URL = {http://projecteuclid.org/euclid.atmp/1415818564},
}

@article {ooguri,
    AUTHOR = {Ooguri, Hirosi and Vafa, Cumrun},
     TITLE = {Knot invariants and topological strings},
   JOURNAL = {Nuclear Phys. B},
  FJOURNAL = {Nuclear Physics. B. Theoretical, Phenomenological, and
              Experimental High Energy Physics. Quantum Field Theory and
              Statistical Systems},
    VOLUME = {577},
      YEAR = {2000},
    NUMBER = {3},
     PAGES = {419--438},
      ISSN = {0550-3213},
   MRCLASS = {81T45 (14N10 57M27 81T30)},
  MRNUMBER = {1765411},
MRREVIEWER = {Marcos Mari\~{n}o},
       DOI = {10.1016/S0550-3213(00)00118-8},
       URL = {https://doi-org.proxy.libraries.rutgers.edu/10.1016/S0550-3213(00)00118-8},
}

@article {ref1,
    AUTHOR = {Golovko, Roman},
     TITLE = {A note on the infinite number of exact {L}agrangian fillings
              for spherical spuns},
   JOURNAL = {Pacific J. Math.},
  FJOURNAL = {Pacific Journal of Mathematics},
    VOLUME = {317},
      YEAR = {2022},
    NUMBER = {1},
     PAGES = {143--152},
      ISSN = {0030-8730},
   MRCLASS = {53D12 (53D42)},
  MRNUMBER = {4440942},
       DOI = {10.2140/pjm.2022.317.143},
       URL = {https://doi-org.proxy.libraries.rutgers.edu/10.2140/pjm.2022.317.143},
}

@article {ref2,
    AUTHOR = {Legout, No\'{e}mie},
     TITLE = {Product structures in {F}loer theory for {L}agrangian
              cobordisms},
   JOURNAL = {J. Symplectic Geom.},
  FJOURNAL = {The Journal of Symplectic Geometry},
    VOLUME = {18},
      YEAR = {2020},
    NUMBER = {6},
     PAGES = {1647--1750},
      ISSN = {1527-5256},
   MRCLASS = {57R17 (57R58)},
  MRNUMBER = {4325380},
MRREVIEWER = {Jun Zhang},
       DOI = {10.4310/JSG.2020.v18.n6.a5},
       URL = {https://doi-org.proxy.libraries.rutgers.edu/10.4310/JSG.2020.v18.n6.a5},
}

@article {ref3,
    AUTHOR = {Etg\"{u}, Tolga},
     TITLE = {Nonfillable {L}egendrian knots in the 3-sphere},
   JOURNAL = {Algebr. Geom. Topol.},
  FJOURNAL = {Algebraic \& Geometric Topology},
    VOLUME = {18},
      YEAR = {2018},
    NUMBER = {2},
     PAGES = {1077--1088},
      ISSN = {1472-2747},
   MRCLASS = {57R17 (16E45 53D42)},
  MRNUMBER = {3773748},
MRREVIEWER = {Laura P. Starkston},
       DOI = {10.2140/agt.2018.18.1077},
       URL = {https://doi-org.proxy.libraries.rutgers.edu/10.2140/agt.2018.18.1077},
}

@article {ref4,
    AUTHOR = {Ng, Lenhard and Rutherford, Dan and Shende, Vivek and Sivek,
              Steven},
     TITLE = {The cardinality of the augmentation category of a {L}egendrian
              link},
   JOURNAL = {Math. Res. Lett.},
  FJOURNAL = {Mathematical Research Letters},
    VOLUME = {24},
      YEAR = {2017},
    NUMBER = {6},
     PAGES = {1845--1874},
      ISSN = {1073-2780},
   MRCLASS = {57M25 (18F99 57R17)},
  MRNUMBER = {3762698},
MRREVIEWER = {Tye Lidman},
       DOI = {10.4310/MRL.2017.v24.n6.a14},
       URL = {https://doi-org.proxy.libraries.rutgers.edu/10.4310/MRL.2017.v24.n6.a14},
}

@article {ref5,
    AUTHOR = {Pan, Yu},
     TITLE = {Exact {L}agrangian fillings of {L}egendrian {$(2,n)$} torus
              links},
   JOURNAL = {Pacific J. Math.},
  FJOURNAL = {Pacific Journal of Mathematics},
    VOLUME = {289},
      YEAR = {2017},
    NUMBER = {2},
     PAGES = {417--441},
      ISSN = {0030-8730},
   MRCLASS = {53D42 (57R17)},
  MRNUMBER = {3667178},
MRREVIEWER = {Janko Latschev},
       DOI = {10.2140/pjm.2017.289.417},
       URL = {https://doi-org.proxy.libraries.rutgers.edu/10.2140/pjm.2017.289.417},
}

@article {ref6,
    AUTHOR = {Ekholm, Tobias and Honda, Ko and K\'{a}lm\'{a}n, Tam\'{a}s},
     TITLE = {Legendrian knots and exact {L}agrangian cobordisms},
   JOURNAL = {J. Eur. Math. Soc. (JEMS)},
  FJOURNAL = {Journal of the European Mathematical Society (JEMS)},
    VOLUME = {18},
      YEAR = {2016},
    NUMBER = {11},
     PAGES = {2627--2689},
      ISSN = {1435-9855},
   MRCLASS = {53D42 (53D10 53D40 57M27 57R90)},
  MRNUMBER = {3562353},
MRREVIEWER = {Georgios Dimitroglou Rizell},
       DOI = {10.4171/JEMS/650},
       URL = {https://doi-org.proxy.libraries.rutgers.edu/10.4171/JEMS/650},
}

@article {ref7,
    AUTHOR = {Cornwell, Christopher and Ng, Lenhard and Sivek, Steven},
     TITLE = {Obstructions to {L}agrangian concordance},
   JOURNAL = {Algebr. Geom. Topol.},
  FJOURNAL = {Algebraic \& Geometric Topology},
    VOLUME = {16},
      YEAR = {2016},
    NUMBER = {2},
     PAGES = {797--824},
      ISSN = {1472-2747},
   MRCLASS = {57M25 (53D12 53D42 57R17)},
  MRNUMBER = {3493408},
MRREVIEWER = {Georgios Dimitroglou Rizell},
       DOI = {10.2140/agt.2016.16.797},
       URL = {https://doi-org.proxy.libraries.rutgers.edu/10.2140/agt.2016.16.797},
}

@article {riz:lift,
    AUTHOR = {Dimitroglou Rizell, Georgios},
     TITLE = {Lifting pseudo-holomorphic polygons to the symplectisation of
              {$P\times\Bbb{R}$} and applications},
   JOURNAL = {Quantum Topol.},
  FJOURNAL = {Quantum Topology},
    VOLUME = {7},
      YEAR = {2016},
    NUMBER = {1},
     PAGES = {29--105},
      ISSN = {1663-487X},
   MRCLASS = {53D42 (53D12 53D40)},
  MRNUMBER = {3459958},
MRREVIEWER = {Weiwei Wu},
       DOI = {10.4171/QT/73},
       URL = {https://doi-org.proxy.libraries.rutgers.edu/10.4171/QT/73},
}

@inproceedings {ref9,
    AUTHOR = {Chantraine, Baptiste and Dimitroglou Rizell, Georgios and
              Ghiggini, Paolo and Golovko, Roman},
     TITLE = {Floer homology and {L}agrangian concordance},
 BOOKTITLE = {Proceedings of the {G}\"{o}kova {G}eometry-{T}opology {C}onference
              2014},
     PAGES = {76--113},
 PUBLISHER = {G\"{o}kova Geometry/Topology Conference (GGT), G\"{o}kova},
      YEAR = {2015},
   MRCLASS = {53D42 (53D12 57R17 57R58)},
  MRNUMBER = {3381440},
MRREVIEWER = {Daniel V. Mathews},
}

@article {ref11,
    AUTHOR = {Dimitroglou Rizell, Georgios and Golovko, Roman},
     TITLE = {On homological rigidity and flexibility of exact {L}agrangian
              endocobordisms},
   JOURNAL = {Internat. J. Math.},
  FJOURNAL = {International Journal of Mathematics},
    VOLUME = {25},
      YEAR = {2014},
    NUMBER = {10},
     PAGES = {1450098, 24},
      ISSN = {0129-167X},
   MRCLASS = {53D12 (53D42)},
  MRNUMBER = {3275826},
MRREVIEWER = {Cecilia Karlsson},
       DOI = {10.1142/S0129167X14500980},
       URL = {https://doi-org.proxy.libraries.rutgers.edu/10.1142/S0129167X14500980},
}

@article {ref12,
    AUTHOR = {Bourgeois, Fr\'{e}d\'{e}ric and Chantraine, Baptiste},
     TITLE = {Bilinearized {L}egendrian contact homology and the
              augmentation category},
   JOURNAL = {J. Symplectic Geom.},
  FJOURNAL = {The Journal of Symplectic Geometry},
    VOLUME = {12},
      YEAR = {2014},
    NUMBER = {3},
     PAGES = {553--583},
      ISSN = {1527-5256},
   MRCLASS = {53D42},
  MRNUMBER = {3248668},
MRREVIEWER = {Daniel V. Mathews},
       URL = {http://projecteuclid.org.proxy.libraries.rutgers.edu/euclid.jsg/1409319460},
}

@article {ref13,
    AUTHOR = {Golovko, R.},
     TITLE = {A note on the front spinning construction},
   JOURNAL = {Bull. Lond. Math. Soc.},
  FJOURNAL = {Bulletin of the London Mathematical Society},
    VOLUME = {46},
      YEAR = {2014},
    NUMBER = {2},
     PAGES = {258--268},
      ISSN = {0024-6093},
   MRCLASS = {53D12 (53D42)},
  MRNUMBER = {3194745},
MRREVIEWER = {Cecilia Karlsson},
       DOI = {10.1112/blms/bdt091},
       URL = {https://doi-org.proxy.libraries.rutgers.edu/10.1112/blms/bdt091},
}

@article {ref14,
    AUTHOR = {Sabloff, Joshua M. and Traynor, Lisa},
     TITLE = {Obstructions to {L}agrangian cobordisms between {L}egendrians
              via generating families},
   JOURNAL = {Algebr. Geom. Topol.},
  FJOURNAL = {Algebraic \& Geometric Topology},
    VOLUME = {13},
      YEAR = {2013},
    NUMBER = {5},
     PAGES = {2733--2797},
      ISSN = {1472-2747},
   MRCLASS = {53D12 (57Q60 57R17)},
  MRNUMBER = {3116302},
MRREVIEWER = {Jelena Kati\'{c}},
       DOI = {10.2140/agt.2013.13.2733},
       URL = {https://doi-org.proxy.libraries.rutgers.edu/10.2140/agt.2013.13.2733},
}

@incollection {ref15,
    AUTHOR = {Bourgeois, Fr\'{e}d\'{e}ric},
     TITLE = {A survey of contact homology},
 BOOKTITLE = {New perspectives and challenges in symplectic field theory},
    SERIES = {CRM Proc. Lecture Notes},
    VOLUME = {49},
     PAGES = {45--71},
 PUBLISHER = {Amer. Math. Soc., Providence, RI},
      YEAR = {2009},
   MRCLASS = {53D42},
  MRNUMBER = {2555933},
MRREVIEWER = {Michael L. Hutchings},
       DOI = {10.1090/crmp/049/02},
       URL = {https://doi-org.proxy.libraries.rutgers.edu/10.1090/crmp/049/02},
}
\bibliographystyle{plain}

\end{document}